\documentclass{article}
\usepackage[utf8]{inputenc}

\usepackage{amssymb}
\usepackage{amsthm}
\usepackage{amsmath,amscd}
\usepackage[mathscr]{euscript}
\usepackage[all]{xy}
\usepackage{lmodern}
\usepackage[T1]{fontenc}
\usepackage[textwidth=14cm,hcentering]{geometry}
\usepackage[colorlinks=true,linkcolor=red,citecolor=blue]{hyperref}
\usepackage{cleveref}
\usepackage{mathtools}
\usepackage{xspace}
\usepackage{bbm}
\usepackage{tikz}
\usetikzlibrary{cd}
\usetikzlibrary{arrows,positioning}
\usepackage{caption}
\usepackage{aliascnt}
\title{Free rigid commutative algebras}
\author{Maxime Ramzi}
\date{}

\newtheorem{thm}{Theorem}[section]
\newaliascnt{lm}{thm}  
\newtheorem{lm}[lm]{Lemma}
\aliascntresetthe{lm}
\Crefname{lm}{Lemma}{Lemmas}
\newaliascnt{prop}{thm}  
\newtheorem{prop}[prop]{Proposition}
\aliascntresetthe{prop}
\Crefname{prop}{Proposition}{Propositions}
\newaliascnt{cor}{thm}  
\newtheorem{cor}[cor]{Corollary}
\aliascntresetthe{cor}
\Crefname{cor}{Corollary}{Corollaries}

\newtheorem*{thm*}{Theorem}
\newtheorem*{cor*}{Corollary}

\theoremstyle{definition}
\newaliascnt{defn}{thm}  
\newtheorem{defn}[defn]{Definition}
\aliascntresetthe{defn}
\Crefname{defn}{Definition}{Definitions}
\newaliascnt{cons}{thm}  
\newtheorem{cons}[cons]{Construction}
\aliascntresetthe{cons}
\Crefname{cons}{Construction}{Constructions}
\newaliascnt{nota}{thm}  
\newtheorem{nota}[nota]{Notation}
\aliascntresetthe{nota}
\Crefname{nota}{Notation}{Notations}
\newaliascnt{conv}{thm}  

\aliascntresetthe{conv}
\Crefname{conv}{Convention}{Conventions}
\newaliascnt{ex}{thm}  
\newtheorem{ex}[ex]{Example}
\aliascntresetthe{ex}
\Crefname{ex}{Example}{Examples}
\newaliascnt{rmk}{thm}  
\newtheorem{rmk}[rmk]{Remark}
\aliascntresetthe{rmk}
\Crefname{rmk}{Remark}{Remarks}
\newaliascnt{ques}{thm}  
\newtheorem{ques}[ques]{Question}
\aliascntresetthe{ques}
\Crefname{ques}{Question}{Questions}
\newaliascnt{conj}{thm}  

\aliascntresetthe{conj}
\Crefname{conj}{Conjecture}{Conjectures}
\newaliascnt{warn}{thm}  
\newtheorem{warn}[warn]{Warning}
\aliascntresetthe{warn}
\Crefname{warn}{Warning}{Warnings}
\newaliascnt{obs}{thm}  
\newtheorem{obs}[obs]{Observation}
\aliascntresetthe{obs}
\Crefname{obs}{Observation}{Observations}
\newtheorem*{ques*}{Question}
\newtheorem*{rmk*}{Remark}
\newtheorem*{ex*}{Example}
\newaliascnt{recoll}{thm}  
\newtheorem{recoll}[recoll]{Recollection}
\aliascntresetthe{recoll}
\Crefname{recoll}{Recollection}{Recollections}

\newcommand{\op}{^{\mathrm{op}}}
\newcommand{\cat}{\mathrm}
\newcommand{\Cat}{\cat{Cat}}
\newcommand{\on}{\operatorname}
\newcommand{\id}{\mathrm{id}}
\newcommand{\Fun}{\on{Fun}}

\newcommand{\Map}{\on{Map}}
\newcommand{\Hom}{\on{Hom}}

\newcommand{\Frrig}{\mathrm{Fr}^\rig}
\newcommand{\ev}{\mathrm{ev}}

\newcommand{\Dim}{\on{Dim}}

\newcommand{\cst}{\mathrm{cst}}
\newcommand{\V}{\mathcal{V}}
\newcommand{\M}{\mathcal{M}}
\newcommand{\N}{\mathbb N}
\newcommand{\Z}{\mathbb Z}

\newcommand{\st}{\mathrm{st}}

\newcommand{\Ab}{\cat{Ab}}

\newcommand{\An}{\cat{An}}
\newcommand{\Sp}{\cat{Sp}}

\newcommand{\cart}{\cat{Cart}}
\newcommand{\PrL}{\cat{Pr}^\mathrm{L} }
\newcommand{\rex}{\mathrm{rex}}

\newcommand{\lax}{\mathrm{lax}}

\newcommand{\CAlg}{\mathrm{CAlg}}

\newcommand{\Mod}{\cat{Mod}}

\newcommand{\Cob}{\mathrm{Cob}}
\newcommand{\HH}{\mathrm{HH}}
\newcommand{\THH}{\mathrm{THH}}

\newcommand{\Prof}{\cat{Prof}}

\newcommand{\Ind}{\mathrm{Ind}}
\newcommand{\End}{\mathrm{End}}

\newcommand{\pt}{\mathrm{pt}}
\newcommand{\colim}{\mathrm{colim}}
\newcommand{\Lax}{\mathrm{Lax}}

\newcommand{\Psh}{\cat{Psh}}

\newcommand{\dbl}{\mathrm{dbl}}
\newcommand{\rig}{\mathrm{rig}}

\newcommand{\oplax}{\mathrm{oplax}}

\newcommand{\one}{\mathbbm{1}}

\newcommand{\EE}{\mathbb{E}}
\newcommand{\B}{\mathbf{B}}

\newcommand{\coCart}{\mathrm{coCart}}
\newcommand{\Ar}{\mathrm{Ar}}

\newcommand{\W}{\mathcal W}

\DeclareFontFamily{U}{min}{}
\DeclareFontShape{U}{min}{m}{n}{<-> udmj30}{}

\newcommand{\bigboxtimes}{%
  \mathop{\vcenter{\hbox{\Large$\boxtimes$}}}%
}

\newcommand{\category}{\xspace{$(\infty,1)$-category}\xspace}
\newcommand{\categories}{\xspace{$(\infty,1)$-categories}\xspace}
\newcommand{\twocategory}{\xspace{$(\infty,2)$-category}\xspace}
\newcommand{\twocategories}{\xspace{$(\infty,2)$-categories}\xspace}

\begin{document}

\maketitle
\begin{abstract}
    We describe free rigid commutative algebras in $2$-presentably symmetric monoidal $(\infty,2)$-categories as oplax colimits over the $1$-dimensional framed cobordism category. The special case of the $(\infty,2)$-category $\PrL$ itself provides a description of the free symmetric monoidal $(\infty,1)$-category with duals on a given $(\infty,1)$-category, while the case of $\Mod_\V(\PrL)$ provides a description of a similar object in the $\V$-enriched context, for $\mathcal{V}$ a presentably symmetric monoidal $(\infty,1)$-category.

    As a byproduct, we obtain new proofs of some results about rigidification of locally rigid categories, as well as a proof that any rigid category over $\Sp$ embeds into a compactly-rigidly generated one.  
\end{abstract}
\begin{center}

\includegraphics[scale=0.20]{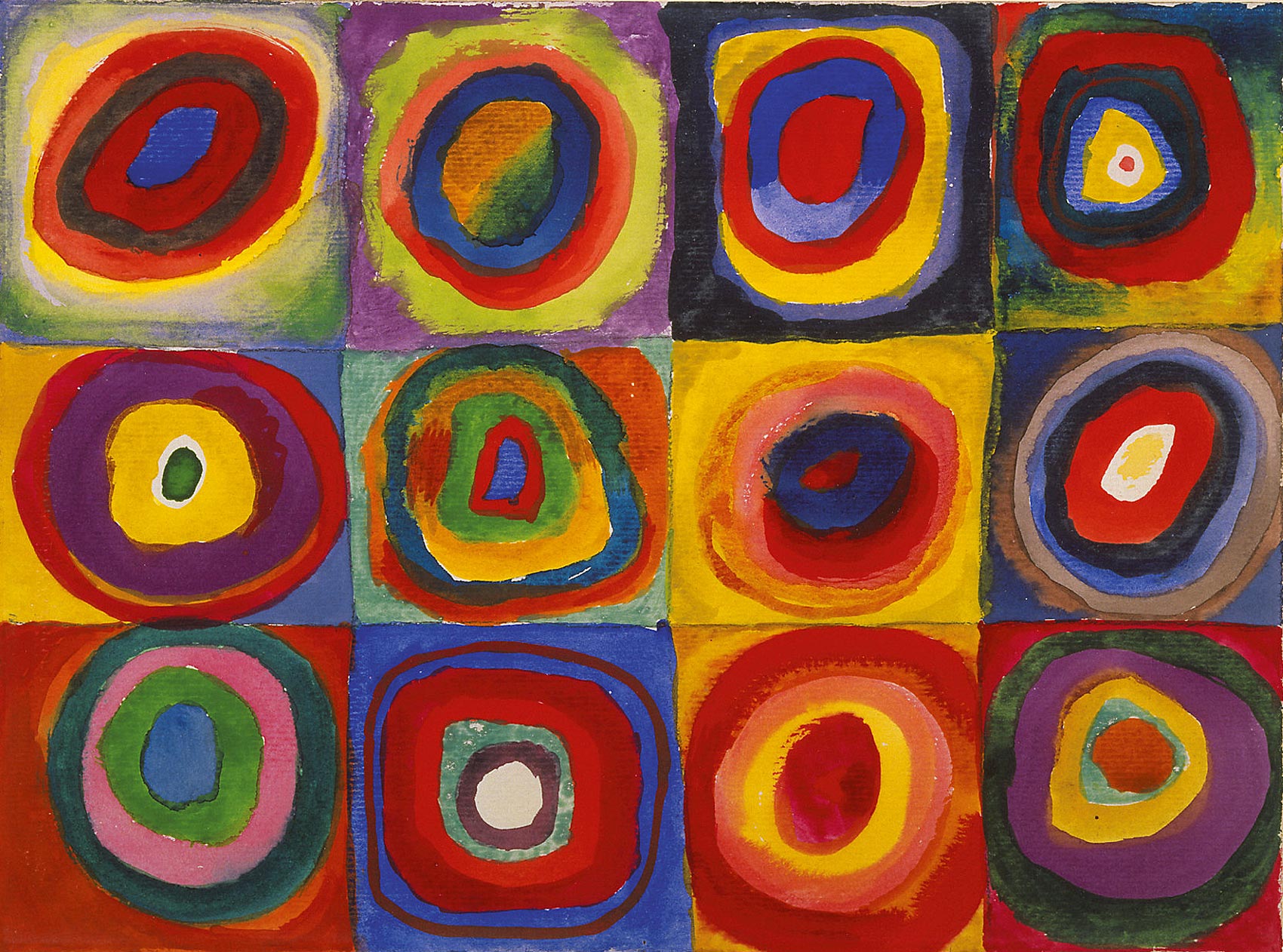}
\captionof{figure}{Farbstudie - Quadrate und konzentrische Ringe, W. Kandinsky}
\end{center}
    
\newpage
\tableofcontents
\section*{Introduction}
Symmetric monoidal categories are some kind of categorification of commutative rings, and among all (additive, say) symmetric monoidal categories, the ones that behave the most ``like'' commutative rings are the \emph{rigid} ones, that is\footnote{Though we will see that the meaning of this word has expanded in recent years.} the ones where every object is dualizable. These feature in many areas of mathematics, like tensor-triangular geometry or the theory of motives and are of central importance. Like for any algebraic or categorical gadget, it is therefore a reasonable to ask how to describe \emph{free} rigid symmetric monoidal categories. The goal of the present article is to give a somewhat general description of these objects. 

For example, in the context of additive categories, the description of the free rigid category on a single object goes back at least to work of Deligne \cite[§10]{deligne} and Deligne--Milne \cite[Example 1.27]{delignemilne} and their famous $\mathrm{Rep}(GL_t)$ categories. These were used in recent work of Barthel, Hyslop and the author to disprove Balmer's Nerves of Steel conjecture \cite{BHR}, showing their general applicability and the importance of our ability to describe them.

The question of such a description also makes sense for $(\infty,1)$-categories,  where the Cobordism Hypothesis, conjectured by Baez--Dolan \cite{baezdolan}, describes the free rigid symmetric monoidal $(\infty,1)$-category in terms of $1$-dimensional cobordisms. More generally, the cobordism hypothesis describes the free symmetric monoidal $(\infty,n)$-category on a fully dualizable object in terms of higher dimensional cobordisms.  

Lurie sketched a proof of the Cobordism Hypothesis in all dimensions in \cite{luriecob}, and a complete proof in dimension $1$ was given by Harpaz \cite{harpazcob}. In fact, even in dimension $1$, there are various generalizations of the cobordism hypothesis one can conjecture, describing more general rigid categories defined by generators and relations. In \cite{Jan}, Barkan and Steinebrunner reprove the $1$D cobordism hypothesis, but their methods are general enough to describe symmetric monoidal $(\infty,1)$-categories with duals free on any $(\infty,1)$-category\footnote{In fact, they do much more, but we will not discuss the further generalizations they provide in this article.}.

They also do this in an enriched setting: given a presentably symmetric monoidal $(\infty,1)$-category $\V$, they describe free symmetric monoidal $\V$-enriched $(\infty,1)$-categories on a given $\V$-enriched $(\infty,1)$-category. In the context of $\V=\Ab$, this recovers the descriptions given by Deligne and Milne mentioned earlier. 

The shape of the results they kindly shared with the author suggests an alternative description of these free objects, and the goal of this article is to prove this alternative description with a different method, which in turn generalizes it in a different direction.

To explain our perspective and the relevant generalization (as well as the proof), we start by recalling that the functor $C\mapsto \Psh(C)$, assigning to an \category its presentable \category of presheaves, while not fully faithful, is not far from so: we simply need to neglect idempotent-completion, and to restrict our attention to colimit-preserving functors with a colimit-preserving right adjoint. In this sense, we can think of \categories as a special case of presentable \categories, and symmetric monoidal \categories as presentably symmetric monoidal \categories. With a base presentably symmetric monoidal $\V$, a similar story works, replacing presentable \categories with presentable $\V$-modules, and symmetric monoidal $\V$-enriched \categories with commutative $\V$-algebras in $\PrL$. The advantage of this point of view, developped to a large extent in \cite{reutterzetto}, is that it remains wholly in the language of the higher algebra of \categories, and is thereby easier to manipulate. 

From this perspective, the notion that plays the role of a symmetric monoidal ($\V$-enriched) \category \emph{with duals} is that of a \emph{rigid} commutative $\V$-algebra, as introduced by Gaitsgory and Rozenblyum \cite[Chapter 1, §9]{gaitsroz} and studied by the author in \cite{LocRig}. We refer to \Cref{section:rig} for a precise definition, but we note already that unlike ``every object in $C$ has a dual'', the statement ``$\Psh(C)$ is rigid'' can be stated completely in the language of symmetric monoidal \twocategories, interpreted in the symmetric monoidal \twocategory $\PrL$. It therefore makes sense to ask, given a symmetric monoidal \twocategory $\B$, whether there exist free rigid commutative algebras, and if yes, how one can describe them. It turns out that a reasonable generality in which they might exist is for \emph{dualizable} objects in $\B$, which play the role of presentable \categories of the form $\Psh(C)$, but strictly generalize them. 
\begin{ex*}
    In the context of $\B=\PrL_\st$, the author showed in \cite{Dbl} that a certain category $\Pr^\dbl_\st$ of dualizable presentable stable \categories is itself presentable, and in \cite{LocRig} that the forgetful functor from rigid commutative algebras to $\Pr^\dbl_\st$ preserves limits. Hence, by the adjoint functor theorem, it admits a left adjoint: free rigid commutative algebras on dualizable objects exist. 
\end{ex*}

In the description obtained by Barkan and Steinebrunner in \cite{Jan}, the mapping anima in the free symmetric monoidal \category with duals on a given \category $C$ involve colimits of mapping anima in $C$, and therefore it seems reasonable to expect that one needs $\B$ to be sufficiently \emph{locally} cocomplete to make sense of this, that is, that its hom categories are sufficiently cocomplete. We define in the main text a notion of weakly $2$-presentably symmetric monoidal \twocategories (see \Cref{defn:weak2prl})
that encodes this. With this in mind, an informal version of our main result is as follows:
\begin{thm*}[\Cref{thm:maintext}]
    Let $\B$ be a weakly $2$-presentably symmetric monoidal \twocategory, and $b\in \B$ be a dualizable object. There exists a free rigid commutative algebra on $b$ in $\B$, and it is given by a canonical commutative algebra structure on $$\colim^\oplax_{M^+\coprod N^-\in\Cob^{\mathrm{1d,or}}} b^{\otimes M}\otimes (b^\vee)^{\otimes N} ,$$ the oplax colimit of the unique symmetric monoidal functor $\Cob^{\mathrm{1d,or}}\to \B$ sending the positively oriented point $+$ to $b$. 
\end{thm*}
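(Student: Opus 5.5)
The approach is to exhibit the oplax colimit $F(b):=\colim^\oplax_{\Cob^{\mathrm{1d,or}}} Z_b$, where $Z_b\colon \Cob^{\mathrm{1d,or}}\to\B$ is the symmetric monoidal functor with $Z_b(+)=b$ given by the (1-dimensional) cobordism hypothesis for the dualizable object $b$. We then show that it corepresents $\Map_{\CAlg^\rig(\B)}(F(b),A)\simeq \Map_{\B^\dbl}(b,A)$. The point is that oplax colimits in $\B$ are computed by $\B(\colim^\oplax Z_b, A)\simeq \mathrm{Nat}^{\lax}(Z_b,\cst_A)$, where the right-hand side is lax natural transformations. This holds because the hom categories of $\B$ are cocomplete and $\otimes$ is cocontinuous in each variable, which is exactly what the weak $2$-presentability of \Cref{defn:weak2prl} provides. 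In particular the oplax colimit exists.

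The first step is the algebra structure. The indexing category $\Cob^{\mathrm{1d,or}}$ is symmetric monoidal and $Z_b$ is symmetric monoidal, so the oplax colimit inherits a commutative algebra structure. Concretely, the oplax colimit functor $\Fun(I,\B)\to\B$ is lax symmetric monoidal for the Day convolution on $\Fun(I,\B)$ when $I$ is symmetric monoidal: the comparison $\colim^\oplax_I Z\otimes\colim^\oplax_I W\to\colim^\oplax_{I\times I}(Z\boxtimes W)\to\colim^\oplax_I(Z\otimes W)$ is obtained by pushing forward along $\otimes\colon I\times I\to I$. A symmetric monoidal functor is a commutative algebra for Day convolution, so its image is a commutative algebra. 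The same argument exhibits $\Map_{\CAlg}(F(b),A)$ as the space of symmetric monoidal lax transformations $Z_b\Rightarrow \cst_A$.

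The second step is to identify these. A symmetric monoidal lax transformation $Z_b\Rightarrow\cst_A$ consists of components $Z_b(M)\to A$ together with non-invertible $2$-cells for every bordism. By monoidality it is determined by the two components $\phi_+\colon b\to A$ and $\phi_-\colon b^\vee\to A$, together with $2$-cells filling the evaluation and coevaluation squares, $\mu\circ(\phi_+\otimes\phi_-)\Rightarrow u\circ\mathrm{ev}$ and $\mathrm{coev}$-analogously, subject to the zigzag coherences. I would invoke the description of rigid algebras from \Cref{section:rig}. There, $A$ rigid means that $\mu$ has a right adjoint that is $A$-bilinear and that $u$ has a right adjoint. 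This lets one use the self-duality $A\simeq A^\vee$ via $u^R\mu$ and produce $\phi_-$ and the $2$-cells canonically from $\phi_+$, namely $\phi_-=(\phi_+)^\vee$ transported through the self-duality, with mates of units and counits giving the cells. One then checks that the resulting map from lax transformations to $\Map(b,A)$ is an equivalence. The cleanest route is probably to reduce the lax transformation to a symmetric monoidal functor $\Cob^{\mathrm{1d,or}}\to \B_{/\!/A}$ into a suitable lax slice. Under rigidity of $A$, $+\mapsto(b,\phi_+)$ should be dualizable there with dual $(b^\vee,\phi_-)$, and then the cobordism hypothesis applied in this slice identifies such functors with $\Map(b,A)^{\simeq}$.

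Finally, the result must also be shown to be rigid, which follows by checking the adjointability conditions on generators: multiplication and unit on each summand $b^{\otimes M}\otimes(b^\vee)^{\otimes N}$ acquire right adjoints from the bordism $2$-morphisms built into the oplax colimit, since the oplax colimit legs form a lax cocone. The main obstacle is the second step, namely making precise that dualizability of $(b,\phi_+)$ in the lax slice over a rigid $A$ is automatic and unique. I would attack it first, by computing mates explicitly in the case $b=\one$ and then bootstrapping.
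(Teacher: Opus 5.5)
Your route differs from the paper's. You try to compute $\Map_{\CAlg(\B)}(F(b),A)$ outright for rigid $A$; the paper only ever uses the comparison in one direction. It builds the counit $F^\rig_\B(UA)\to A$ from the $A$-linear left adjoint $m$ via \Cref{lm:oplaxdbl} in $\Mod_A(\B)$ (\Cref{cons:cocone}), and checks one triangle identity to exhibit the left adjoint as a \emph{retract} of $F^\rig_\B$ (\Cref{cor:exist}). It then shows the retraction $p$ is an equivalence in the universal case $\Fun(\Cob\op,\PrL)$: there the unit of $F^\rig(Y(+))$ is terminal, so $p^R$ preserves units and \Cref{cor:rigff} applies.

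\textbf{The crux step is false as stated.} Dualizability of $(b,\phi_+)$ in the lax slice is \emph{not} automatic, and the answer is not $\Map(b,A)^{\simeq}$. Take $\B=\PrL$, $b=A=\An$ and $\phi_+=\mathbb{N}\times-$. Its right adjoint $\prod_{\mathbb{N}}$ does not preserve coproducts, so $\phi_+$ is not a left adjoint in $\PrL$. Hence $(b,\phi_+)$ is not dualizable: after base change to $\Mod_A(\B)$, \Cref{lm:dualadj} would force $A\otimes b\to A$, hence $\phi_+$, to be a left adjoint. The correct statement is that $(b,\phi_+)$ is dualizable if and only if $\phi_+$ is a left adjoint. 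For ``if'', $A\otimes b\to A\otimes A\xrightarrow{m}A$ is then an $A$-linear left adjoint by rigidity (cf.\ \Cref{prop:ladjunder}). This yields $\Map_{\B^\dbl}(b,UA)$, and in that form your slice argument is just \Cref{cons:cocone}.

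\textbf{The converse direction is unproved.} That an algebra map $F(b)\to A$ is determined by its restriction to $b$ is exactly what the paper never proves directly. In your plan it rests on the claim that algebra maps out of $\colim^\oplax$ are exactly the lax symmetric monoidal oplax transformations whose monoidality squares commute \emph{strictly}. Only for those is the transformation determined by its components at $\pm$ and classified by \Cref{lm:oplaxdbl}. Note that \Cref{recoll:Funoplax} even allows non-invertible monoidality $2$-cells. Your Day convolution remark makes $\colim^\oplax$ lax monoidal, which produces the algebra structure but not this adjunction. If you established it, your corrected route would be a genuinely more direct proof that bypasses the universal example. The paper's retract-plus-universal-case trick is precisely what lets it avoid needing it.

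\textbf{The rigidity step has no working mechanism.} The oplax colimit has no ``summands''. The $2$-cells indexed by bordisms relate different legs; they neither produce right adjoints nor make $m^R$ be $F(b)$-linear, which the definition of rigidity requires. The paper needs three non-formal inputs:
\begin{itemize}
\item \Cref{prop:incladj}: the legs, in particular the unit $\one_\B\to F(b)$, are left adjoints. This is proved by a cofinality computation in $\Fun(I\op,\PrL)$.
\item \Cref{prop:colimitladj}: a map out of an oplax colimit is a left adjoint if and only if every leg is, even though the transition maps $Z_b(W)$ are not left adjoints. This uses strong local presentability through \Cref{lm:intladj}, \Cref{lm:limladj} and \Cref{lm:jtlycoco}.
\item \Cref{lm:dualadj}: components at dualizable objects of a symmetric monoidal oplax transformation between strong monoidal functors are left adjoints.
\end{itemize}
They combine as follows. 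View $m$ as the $F(b)$-linear map $\colim^\oplax_\Cob(F(b)\otimes Z_b)\to F(b)$ in $\Mod_{F(b)}(\B)$. Its legs $F(b)\otimes Z_b(M)\to F(b)$ are components of such a transformation $\Cob\to\Mod_{F(b)}(\B)$, hence $F(b)$-linear left adjoints.

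A minor point: existence of the oplax colimit comes from $2$-cocompleteness, not from cocompleteness of the hom categories.
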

\begin{ex*}
   The symmetric monoidal \twocategory $\B=\Mod_\V(\PrL)$ is weakly $2$-presentably symmetric monoidal, and $\V$-presheaf \categories of small $\V$-enriched \categories are dualizable, so that this result applies to our motivating example.  
\end{ex*}

Before going further, let us make two comments about the generality in which this theorem is stated and proved. First, we note that even in the case where $\B=\Mod_\V(\PrL)$, our motivating example, there are strictly more dualizable objects than there are (Cauchy complete) small $\V$-enriched \categories. For example, for $\V=\Sp$, dualizable presentable stable \categories have been the subject of growing attention after Efimov's work on his \emph{continuous $K$-theory}, and there, the examples are manifold:
\begin{ex*}
    Let $X$ be a locally compact Hausdorff topological space. The \category of sheaves of spectra on $X$, $\mathrm{Sh}(X,\Sp)$, is dualizable in $\PrL_\st$, but rarely compactly generated, cf. \cite{harr}, \cite[Section 6.4]{efimovI}.
\end{ex*}
Thus, unlike Barkan and Steinebrunner's work, which is more general in some other directions, our result applies also to dualizable objects in $\Mod_\V(\PrL)$ which do not arise as $\V$-presheaf \categories. We note that surprisingly, say for $\PrL$ itself, our proof is no more difficult in the generality of dualizable objects than in the generality of objects of the form $\Psh(C)$. Thus, this added generality is ``for free''.  

Second, since we have stressed so heavily that we mostly cared about $\B=\Mod_\V(\PrL)$, the reader may wonder why the result is stated in the given generality. Besides the fact that this is the natural generality in which the question of free rigid algebras can be asked, let us note that the generality is also heavily \emph{used} in the proof. Indeed, while there is no universal $\V$ equipped with a dualizable module, there \emph{is} a universal $\B$ equipped with a dualizable object\footnote{At this point, the reader may already have guessed what this $\B$ is and they might start to get a feeling for why the answer has to look like it does.}. This allows us to contemplate the free rigid commutative algebra on this universal dualizable object, and ask how to describe it. What one finds, in that case, is that the situation is somewhat degenerate and therefore easy to analyze, essentially because of this universality. This is ultimately how our proof works. 

Now, Barkan and Steinebrunner obtain rather explicit formulae for the mapping anima in the free symmetric monoidal \category with duals on a given \category $C$, and one may worry that \Cref{thm:maintext} does not quite provide that. We also show that the situation is actually rather nice, and we do obtain formulae -- these were in fact what originally motivated the author, given the applications of the work of Barkan and Steinebrunner to the author's \cite{KNS}. We give more detailed formulae for other mapping objects in the body of the text, but for now let us extract one of the key results we were after: 
\begin{cor*}[{\Cref{cor:End1}}]
    Let $\B$ be a weakly $2$-presentably symmetric monoidal \twocategory with unit $\one_\B$, and let $b\in \B$ be dualizable. Let $\Frrig_\B(b)$ denote the free rigid commutative algebra on $b$. 

    In the symmetric monoidal \category $\End(\one_\B)$, the endomorphism ring object of $\one_{\Frrig_\B(b)}$ is the free commutative algebra on $\Dim(b)_{hS^1}$, where $\Dim(b)$ is the symmetric monoidal dimension of $b$ with its $S^1$-action. 
\end{cor*}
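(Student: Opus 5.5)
The plan is to deduce the corollary from the main theorem (\Cref{thm:maintext}) by computing the endomorphisms of the unit of $\Frrig_\B(b)$ directly from the oplax colimit formula. Write $F\colon \Cob^{\mathrm{1d,or}}\to\B$ for the symmetric monoidal functor with $F(+)=b$. The unit of $\Frrig_\B(b)$ is the component $\one_\B\to\Frrig_\B(b)$ of the oplax cocone at the empty manifold $\emptyset\in\Cob^{\mathrm{1d,or}}$, since $F(\emptyset)=\one_\B$.

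The key input is a formula for mapping objects out of oplax colimits in a weakly $2$-presentable $\B$. For objects $x,y$ of the indexing category with structure maps $\iota_x,\iota_y$, the object of $2$-morphisms $\iota_x\Rightarrow \iota_y$ in $\Hom(\one_\B,\Frrig_\B(b))$ should be a colimit, over the twisted arrow category or a category of zigzags, of the mapping objects in $\B$. In the situation relevant here it reduces to
$$\Hom(\iota_\emptyset,\iota_\emptyset)\simeq \colim_{W\in \Cob(\emptyset,\emptyset)} F(W),$$
where the colimit runs over the mapping anima $\Cob^{\mathrm{1d,or}}(\emptyset,\emptyset)$ and $F(W)\in\End(\one_\B)$. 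Oplaxness is what ensures that only morphisms $\emptyset\to\emptyset$ contribute; a lax or strict colimit would instead involve a localization. I would establish this formula by reducing, via the universal property of oplax colimits as weighted colimits, to computing maps in $\Fun(\Cob,\B)$ out of representables. This is where local cocompleteness of $\B$ is used.

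The remaining step is the computation of the source. The anima $\Cob^{\mathrm{1d,or}}(\emptyset,\emptyset)$ of closed oriented $1$-manifolds is $\coprod_n (B(S^1\rtimes\text{orientation-preserving } \mathrm{Diff}))^n_{h\Sigma_n}$, which is the free $\EE_\infty$-space on $BS^1\simeq B\mathrm{Diff}^+(S^1)$. Since $F$ is symmetric monoidal, the restriction of $F$ to this anima is the free symmetric monoidal extension of the functor $BS^1\to\End(\one_\B)$ picking out $F(S^1)=\Dim(b)$ with its rotation action. The colimit of a symmetric monoidal functor out of a free $\EE_\infty$-space, with $\otimes$ commuting with colimits in each variable in $\End(\one_\B)$ (again weak $2$-presentability), is $\bigoplus_n (X^{\otimes n})_{h\Sigma_n}$ with $X=\colim_{BS^1}\Dim(b)=\Dim(b)_{hS^1}$, i.e.\ $\mathrm{Sym}(\Dim(b)_{hS^1})$. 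The multiplication, given by composition of endomorphisms of the unit, corresponds to disjoint union of closed manifolds, so the free commutative algebra structure is the one produced.

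The main obstacle is the mapping-object formula for oplax colimits, and in particular identifying that the relevant colimit is over $\Cob(\emptyset,\emptyset)$ with no further quotient, together with compatibility with the multiplication. I would first try to prove the formula by Yoneda against the universal example $\B$ mentioned in the introduction, namely $\Cob$-valued presheaves, where the oplax colimit becomes an explicit presheaf computation.
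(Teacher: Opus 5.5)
Your computation of the underlying object follows the paper. The identification $\iota_\emptyset^R\iota_\emptyset\simeq\colim_{W\in\Map_\Cob(\emptyset,\emptyset)}F(W)$ is exactly \Cref{cor:mapformula}, which the paper proves, as you suggest, by reducing to $\Fun(\Cob\op,\PrL)$. It is then combined with the description of $\Map_\Cob(\emptyset,\emptyset)$ as the free $\EE_\infty$-space on $BS^1$.

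The gap is in the identification of the commutative algebra structure, which is the actual content of the statement. The claim that ``multiplication corresponds to disjoint union'' is only a homotopy-level heuristic. To show that the canonical algebra map $\mathrm{Sym}(\Dim(b)_{hS^1})\to\eta^R\eta$ is an equivalence, you would need its restriction to each summand $\bigl((\Dim(b)_{hS^1})^{\otimes n}\bigr)_{h\Sigma_n}$ to be the inclusion of the $n$-circle component of the colimit. That means identifying two things coherently through the colimit formula:
\begin{itemize}
\item the $\Sigma_n$-equivariant $n$-fold multiplication on $\eta^R\eta$, which comes from the lax monoidality of $\eta^R$ in \Cref{defn:End1};
\item $F$ applied to the disjoint-union maps $(BS^1)^{n}\to\Map_\Cob(\emptyset,\emptyset)$.
\end{itemize}
Your outline does not produce this identification. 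You flag ``compatibility with the multiplication'' as an obstacle, but you only propose to use the universal example to prove the colimit formula, not to settle the algebra structure.

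The missing idea, which removes all this bookkeeping, is terminality in the universal example. Take $\B=\Fun(\Cob\op,\PrL)$ and $b=Y(+)$. Then $\eta^R\eta$ is \emph{terminal} in $\End(\one_\B)\simeq\An[\Map_\Cob(\emptyset,\emptyset)]$, by \Cref{ex:univonI} or the terminality clause of \Cref{cor:mapformula}. Your own underlying-object computation shows that $\mathrm{Sym}(\Dim(b)_{hS^1})$ has the same, hence terminal, underlying object there. A commutative algebra with terminal underlying object is the terminal commutative algebra. So the two algebras agree in the universal case with no need to track multiplications.

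One then transports along the symmetric monoidal $2$-functor $\overline{b}\colon\Fun(\Cob\op,\PrL)\to\B$ sending $Y(+)$ to $b$. It preserves oplax colimits and adjunctions, hence the algebra $\eta^R\eta$. It preserves $\Dim$ with its $S^1$-action. Being locally colimit-preserving, it preserves free commutative algebras in $\End(\one)$. So the equivalence of commutative algebras holds for every $(\B,b)$.
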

\begin{ex*}
    When $\V=\Ab$ and $b=\Fun^\times(C\op, \Ab)$ for some small additive category $C$, our main theorem gives a description of $\Fun^\times(F(C)\op,\Ab)$ where $F(C)$ is the free additively symmetric monoidal category with duals on $C$. For example when $C$ is the category of finitely generated free modules over $\Z$, this gives a description of the free additively symmetric monoidal category on a dualizable object. This was already described rather elementarily, for example in \cite[Example 1.26]{delignemilne} (see also \cite[Définition 10.2, Proposition 10.3]{deligne}) -- in this case one is trying to describe a $1$-category, and so one can establish a description ``by hand'', and this what these references do. 
    
    In particular, the $S^1$-orbits from the above corollary are invisible in $\Ab$. The more detailed formula from \Cref{cor:maintextV} shows how to recover the full description given in \emph{loc. cit.} in this example.
\end{ex*}
The fact that our formula is so simple turns out to have surprising byproducts in the theory of rigid commutative algebras that the author did not really expect. Specifically, we give a new and simpler proof of \cite[Theorem 4.67]{LocRig}, as well as generalize (and give what looks like a simpler proof of) \cite[Proposition 4.1]{efimovII}. For details, we invite the interested reader to consult \Cref{section:locrig}. However, let us also point out the following surprising corollary of our formula, which answers a question posed to us by Nikolaus (to which we expected a negative answer):
\begin{cor*}[\Cref{cor:ffrigidemb}]
    Let $C\in \CAlg(\PrL_\st)$ be a rigid commutative algebra. There exists a rigid, \emph{compactly generated} commutative algebra $D$ and a fully faithful embedding $C\to D$ in $\CAlg(\PrL_\st)$.
 \end{cor*}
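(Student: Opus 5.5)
The plan is to embed $C$ into the free rigid commutative algebra on its underlying dualizable object, and then to check that this free object is compactly generated. Since $C$ is rigid, it is in particular dualizable in $\PrL_\st$. By the main theorem (\Cref{thm:maintext}) applied to $\B=\PrL_\st$ and $b=C$, we have the free rigid commutative algebra $D:=\Frrig(C)$. The identity of $C$ induces, by adjunction, a map $\varepsilon\colon D\to C$ in $\CAlg(\PrL_\st)$ (the counit). This is the wrong direction, so the first task is to produce a map $C\to D$ instead.

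We would build $C\to D$ from the explicit oplax colimit formula. The oplax colimit over $\Cob^{\mathrm{1d,or}}$ comes with insertion maps $b^{\otimes M}\otimes (b^\vee)^{\otimes N}\to D$. In particular the insertion at $M=\{+\}$, $N=\emptyset$ gives the unit of the adjunction $\eta\colon C\to D$; this is a map in $\PrL_\st$, not a priori in $\CAlg$. The composite $\varepsilon\circ\eta=\id_C$ by the triangle identity, so $\eta$ is a section in $\PrL_\st$. Fully faithfulness of $\eta$ should then follow from the oplax colimit formula. In an oplax colimit in $\PrL$ (a lax limit of right adjoints, or a category of sections), the insertion at an object $x$ is fully faithful whenever the endomorphism data at $x$ is trivial enough. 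Here one must compute $\eta^R\eta$ via the mapping formulas from the body of the text and check that it is the identity.

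The hard part is upgrading this to a map of commutative algebras. Writing $D$ as a completion of $C\otimes D'$ with $D'$ the free rigid algebra on the "universal" part does not help directly. The better route is the following: rigid algebras $C$ have the property that $C\otimes C\to C$ has a colimit-preserving $C$-bilinear right adjoint, and maps out of a free rigid algebra are determined by underlying maps. So instead consider $D\otimes C$, which is rigid, and the map $C\to D\otimes C$, $c\mapsto 1\otimes c$. This map is in $\CAlg$ but must be shown fully faithful. Base changing the adjunction $\varepsilon\dashv\varepsilon^R$ along $C$, the composite $C\to D\otimes C\xrightarrow{\varepsilon\otimes\id}C\otimes C\xrightarrow{\mu}C$ is the identity. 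It then suffices to show that $\mathbb 1_C\to \varepsilon^R$-type unit maps are equivalences, which we would reduce to the computation of the endomorphism object of the unit in \Cref{cor:End1}-style formulas.

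It remains to see compact generation of the target. $D$ is built as a colimit of tensor powers of $C$ and $C^\vee$ indexed by cobordisms, so compact generation of $D$ is not automatic. The idea instead is to choose the input to be compactly generated. Here one would first replace $C$ by a compactly generated dualizable $P$ with $C$ a retract of $P$ in $\PrL_\st$; for instance, take $P$ to be $\Ind$ of the compact objects of $C$, or Efimov's presentation of a dualizable category as a retract of a compactly generated one. Then $\Frrig(P)$ is compactly generated, since it is an oplax colimit of compactly generated categories along compact-preserving functors, and it is rigid. The obstacle, in expected order of difficulty, is threefold: obtaining the symmetric monoidal map $C\to \Frrig(P)\otimes C$, or a variant of it; proving its fully faithfulness from the explicit formula; and ensuring the target stays compactly generated after tensoring with $C$. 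For the last point we would try to replace $\otimes C$ by a localization of $\Frrig(P)$ inverting a rigid idempotent, using that rigid algebras are closed under such localizations.
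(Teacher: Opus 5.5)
\textbf{There is a genuine gap: every map you propose to prove fully faithful is in fact not fully faithful, and the computation you plan to invoke shows this.} By \Cref{cor:mapformula}, for $\eta\colon C\to\Frrig_\st(C)$ we have $\eta^R\eta\simeq\colim_{f\in\Map_{\Cob}(+,+)}\Cob_{\PrL_\st}(C)(f)$. A cobordism $+\to+$ is an arc together with finitely many circles, and each circle contributes a copy of $\THH(C)$ with its $S^1$-action. Hence $\eta^R\eta\simeq\End(\one_{\Frrig_\st(C)})\otimes-\simeq\mathrm{Sym}(\THH(C)_{hS^1})\otimes-$ by \Cref{cor:End1}. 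This is not the identity: already for $C=\Sp$ it is $\Sigma^\infty_+\coprod_n(BS^1)^n_{h\Sigma_n}\otimes-$.

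Your second attempt fails for the same reason. The map $C\to\Frrig_\st(C)\otimes C$ is the base change along $C$ of the unit $\Sp\to\Frrig_\st(C)$, whose unit endomorphism algebra is again $\mathrm{Sym}(\THH(C)_{hS^1})\neq\mathbb S$. This endomorphism algebra of the unit is exactly the obstruction isolated in the remark preceding \Cref{cor:ff}. You need a mechanism to neutralize it, and none of your steps provides one.

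Replacing $C$ by a compactly generated $P$ of which $C$ is merely a retract in $\PrL_\st$ does not help either. It only gives a retraction $\Frrig_\st(C)\to\Frrig_\st(P)\to\Frrig_\st(C)$, not a fully faithful map. Also, $\Ind(C^\omega)$ only sees the compactly generated part of $C$, so it is not a valid choice of $P$.

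\textbf{What is missing.} There are three ingredients.
\begin{enumerate}
\item Obtain the algebra map from a pushout, not a section. Set $D:=C\otimes_{\Frrig_\st(C)}\Frrig_\st(D_0)$, the relative tensor product along the counit $\Frrig_\st(C)\to C$. This comes with a map $C\to D$ in $\CAlg^\rig(\PrL_\st)$ for free. It is fully faithful as soon as $\Frrig_\st(C)\to\Frrig_\st(D_0)$ is, because the right adjoint is $\Frrig_\st(C)$-linear (\Cref{prop:ladjunder}) and base change preserves the unit equivalence.
\item Use \Cref{cor:ff}: $\Frrig$ of a fully faithful internal left adjoint is fully faithful once the map induces an equivalence on $\THH$. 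This is precisely what removes the $\mathrm{Sym}(\THH_{hS^1})$ discrepancy. So the input must be a fully faithful embedding $C\hookrightarrow D_0$ into a compactly generated category that is a motivic (in particular $\THH$-) equivalence, as supplied by \cite[Example 4.14]{Dbl}, not just a retract.
\item Get compact generation from $\Frrig_\st(D_0)$. It is compactly generated because the insertion maps $\mathrm{can}_{i,j}$ are internal left adjoints (\Cref{prop:incladj}) whose images generate. It is not because transition functors preserve compacts: the evaluation $P\otimes P^\vee\to\Sp$ typically does not. Since $\Frrig_\st(C)\to C$ is split surjective, the image of $\Frrig_\st(D_0)$ generates $D$. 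The map $\Frrig_\st(D_0)\to D$, being a map of rigid algebras, preserves compacts, so $D$ is compactly generated. No localization at an idempotent is needed.
\end{enumerate}
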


\begin{rmk}\label{rmk:En}
    We restrict throughout to symmetric monoidal \twocategories and their rigid commutative algebras, but for every $n\geq 1$, there is a reasonable notion\footnote{This is also the case for $n=0$, but there the story is rather trivial.} of rigid $\EE_n$-algebra in an $\EE_n$-monoidal \twocategory. One can ask similar questions and try to get a similar answer, depending this time, not on $\Cob^{\mathrm{1d,or}}$, the free symmetric monoidal \category on a dualizable object, but rather on the free $\EE_n$-monoidal \category on a dualizable object, describable also in cobordism-theoretic terms \cite{tangle}. A lot of our methods work, but as far as we can tell, not all of them directly generalize, and in particular at this point the author does not know how to make the proof go through in that generality. It would certainly be an interesting question to investigate.  

    For the interested reader, the particular point I cannot replicate in the $\EE_n$-story is \Cref{cons:cocone}. One possible approach would be to generalize the work of Neuhauser \cite{neuhauser} in the $\EE_n$-monoidal case and see whether in the universal case there is such an oplax cocone. 
\end{rmk}
\begin{rmk*}
    Another potentially interesting venue for generalization, which we do not adress, lies in the ``higher'' direction. One can hope that the higher cobordism categories appear in the theory of free $n$-rigid commutative algebras in symmetric monoidal $(\infty,n)$-categories -- these gadgets are not yet defined in the literature, but it appears Aoki has given a definition in his thesis. It is also natural to wonder about cobordism categories with tangential structures, and what they encode on the rigid side of things. 
\end{rmk*}
\subsection*{Relation to other work}
The relation to Barkan and Steinebrunner's work \cite{Jan} is hopefully evident. Their results predate ours, and in fact our proof originated as an attempt to reverse-engineer the answer from the description they obtained and generously shared with the author. In particular, their results already cover the case that motivated the author. Thus, one should think of the first part of the present article as a new and perhaps interesting proof of their results.
It is also worth noting that while our results are slightly more general in the sense described in the discussion following the statement of the main theorem, their results are also more general in a different direction, namely they do \emph{not} restrict to describing free symmetric monoidal \categories with duals, and rather also describe other kinds of free constructions, e.g. the free symmetric monoidal category on a dualizable object $x$ equipped with an arrow $\one\to x$, which features in \cite{BHR}. These kinds of free constructions are likely to feature more and more in tt-geometry and related fields, and it would be interesting to see if our methods can be generalized in that direction as well. 

We note that in a more classical direction, free rigid categories have already been considered in the context of additive $1$-categories, cf. the already-mentioned \cite[Example 1.26]{delignemilne} and \cite[Définition 10.2]{deligne}. Our work generalizes these results beyond the case of a single free generator, but also places it in the context of $\infty$-categories: these works are $1$-categorical, and in particular do not see the $S^1$-action on dimensions that is essential in our results and in some applications. 

Back in the $\infty$-categorical context, Neuhauser also studies rigid commutative algebras in abstract symmetric monoidal $(\infty,2)$-categories, where he describes the universal symmetric monoidal $(\infty,2)$-category on a rigid commutative algebra. While the words may sound similar (``free'' and ``universal''), our work is quite different from Neuhauser's \cite{neuhauser}: therein he describes the universal rigid commutative algebra, which lives in a specific, fixed, symmetric monoidal \twocategory, while we fix the symmetric monoidal \twocategory and ask for free rigid commutative algebras therein. The contents are related, and the author originally imagined being able to prove \Cref{thm:maintext} by using \cite{neuhauser}. Even in the current proof architecture, the main result of \emph{loc.cit.} was used in an earlier version of \Cref{cons:cocone}, and, as mentioned in \Cref{rmk:En}, might feature in a discussion of the $\EE_n$-monoidal case. 
 
 Finally, a less obvious relation is to the author's \cite{KNS}. In that work, I heavily use a description of free rigid $C$-algebras for $C$ itself a (locally) rigid commutative algebra in $\PrL_\st$.  The present paper originated as an attempt to understand the results of Barkan and Steinebrunner I was using therein, as well as to minimize the necessary translation between the $\V$-enriched and $\V$-tensored worlds. The results I prove here do allow to minimize this translation, since they are directly in the language of rigid commutative algebras in $\Mod_\V(\PrL)$, and not of enriched symmetric monoidal \categories. 
\subsection*{Sectionwise outline}
\Cref{section:prelim} gathers preparations for the main construction and result: some basics about left adjoints in \twocategories, some recollections about rigid commutative algebras, generalized from the setting of \cite{LocRig} to the setting of a symmetric monoidal \twocategory, and finally we record some elementary interactions between oplax colimits and left adjoints. 

In \Cref{section:cons}, we prove that oplax colimits of lax symmetric monoidal functors are, in certain cases, rigid commutative algebras. We defer the precise construction of these oplax colimits as commutative algebras (\textit{à la} Thom spectrum) to the technical appendix \Cref{app:construction}, and focus in that section on the proof that they are actually rigid.  

In \Cref{section:proof}, we prove that the construction from \Cref{section:cons} actually provides a construction of \emph{free} rigid commutative algebras.

  \Cref{section:calculations} extracts some useful calculations in free rigid commutative algebras, such as endomorphisms of the unit or generalized mapping objects.

  Finally, \Cref{section:locrig} deduces some consequences of our formula for the theory of locally rigid commutative algebras and their rigidifications, in the sense of \cite{LocRig}. 
\subsection*{Conventions}
We work with \categories as extensively developed in Lurie's \cite{HTT,HA}, and mainly try to follow the conventions therein, except that we use ``anima'' in place of ``spaces'' or ``$\infty$-groupoids'', and denote their \category by $\An$. 

We take the nowadays standard convention that in the main text, they are called ``categories'', and ordinary categories are viewed as categories with truncated mapping anima. We do \emph{not} use the standard term ``$1$-category'', since we will use it in opposition to ``$2$-category'', which we use to refer to \twocategories. 

For this, we note that by \cite{loubaton2025squares}, all models and all notions within those models have been compared, and we work with $2$-categories in a model-agnostic way. 

Besides that, some specific conventions are: 
\begin{enumerate}
    \item We write $\Cat$ for the category of small categories, and $\widehat{\Cat}$ for the (very large) category of large categories; 
    \item We write $\widehat{\Cat}^\colim$ for the non-full subcategory of $\widehat{\Cat}$ spanned by categories with all small colimits and colimit-preserving functors, and $\PrL\subset\widehat{\Cat}^\colim$ for the full subcategory spanned by presentable categories; 
    \item If $C$ is a symmetric monoidal ($2$-)category, we write $\CAlg(C), \Mod_A(C), \one_C$ for, respectively, the ($2$-)category of commutative algebras in $C$, the category of modules over a commutative algebra $A$ in $C$, and the unit of $C$, with the exception of $\CAlg(C)$, which we will exclusively view as a $1$-category, even if $C$ is a $2$-category;
    \item We write $\Psh$ for presheaf categories; 
    \item We write $\Map$ for mapping anima in categories, and $\Hom$ for mapping categories in $2$-categories;
     \item We follow \cite{heine} for the comparison between enriched and (weakly) tensored cateories. 
\end{enumerate}
\subsection*{Acknowledgements}
I am extremely indebted to Jan Steinebrunner for numerous conversations related to his work with Barkan \cite{Jan}, which ultimately led to the present work. 

I am thankful to Leor Neuhauser for allowing me to include his proof of \Cref{prop:laxstrict}, and to Markus Zetto for helpful conversations about $\PrL$-tensoring. I am grateful to Fernando Abell\'an, Thomas Blom, Bastiaan Cnossen and Thorger Gei{\ss} for help with the details surrounding \Cref{cons:colim} (which feature in \Cref{app:construction}). Finally, I also had interesting conversations with Thomas Nikolaus and Phil Pützstück related to the subject matter.

This research was funded by the Deutsche Forschungsgemeinschaft (DFG, German Research Foundation) – Project-ID 427320536 – SFB 1442, as well as under Germany's Excellence Strategy EXC 2044/2 –390685587, Mathematics Münster: Dynamics–Geometry–Structure.
\section{Preliminaries}\label{section:prelim}
\subsection{Left adjoints}\label{section:ladj}
Recall that in a $2$-category $\B$, there is a notion of left and right adjoints, defined using units, counits and triangle identities, see e.g. \cite{riehlverity} and \cite{haugseng} for a discussion in the context of $(\infty,2)$-categories. 
\begin{ex}
    In $\B=\Cat$, the notions of left and right adjoints specialize to the usual notions.
\end{ex}
\begin{ex}
    Since $\PrL\to\widehat{\Cat}$ is a $2$-functor, it preserves adjoints. Hence, for a functor $f:C\to D$ in $\PrL$ to be a left adjoint \emph{in} $\PrL$, we need its ordinary right adjoint (which always exists, by the adjoint functor theorem) to also lie in $\PrL$, i.e. to preserve colimits. 

    Since the forgetful functor to $\widehat{\Cat}$ is locally fully faithful, it follows that in $\B=\PrL$, left adjoints are functors whose right adjoint also preserves colimits. 
\end{ex}
\begin{ex}
    For $\V\in\CAlg(\PrL)$, left adjoints in $\Mod_\V(\PrL)$ are the internal left adjoints of \cite[Definition 1.9]{Dbl}.
\end{ex}

In this short section, we provide dual convenient criteria to check when adjoints exist. 

The first is the following: 
\begin{lm}\label{lm:intladj0}
 Let $\B$ be a $2$-category and $f:b_0\to b_1$ a morphism in $\B$. Suppose that for every $Z$, $f\circ -:\Hom_\B(Z,b_0)\to \Hom_\B(Z,b_1)$ admits a right adjoint. In this case, $f$ admits a right adjoint if and only if for any $g: Z'\to Z$, the following square is horizontally right adjointable: 
 \[\begin{tikzcd}
	{\Hom_\B(Z,b_0)} & {\Hom_\B(Z,b_1)} \\
	{\Hom_\B(Z',b_0)} & {\Hom_\B(Z',b_1)}
	\arrow[from=1-1, to=1-2]
	\arrow[from=1-1, to=2-1]
	\arrow[from=1-2, to=2-2]
	\arrow[from=2-1, to=2-2]
\end{tikzcd}\]
\end{lm}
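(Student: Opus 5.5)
The plan is to use the 2-categorical Yoneda lemma. A morphism $f$ has a right adjoint in $\B$ exactly when the induced transformation $f\circ - : \Hom_\B(-,b_0)\Rightarrow \Hom_\B(-,b_1)$ has a right adjoint in the $2$-category of $\Cat$-valued functors on $\B\op$, with strict naturality. Since the functor $b\mapsto \Hom_\B(-,b)$ is $2$-fully faithful, it preserves and reflects adjunctions.

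A transformation between $\Cat$-valued functors has a right adjoint in this functor $2$-category if and only if two things hold: each component has a right adjoint, and the naturality squares are horizontally right adjointable. That is, the mate transformations are invertible, so that the pointwise right adjoints assemble into a natural transformation. Assembling the right adjoints is the standard fact about adjunctions in functor categories, which we cite (e.g.\ via Haugseng's or Riehl--Verity's treatment of mates / \cite{haugseng}).

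\textbf{The ``only if'' direction.} Suppose $f\dashv r$ in $\B$. Then postcomposition $r\circ-$ is right adjoint to $f\circ-$, with unit and counit obtained by whiskering. The naturality square for precomposition by $g$ commutes with these strictly, because pre- and postcomposition commute. Hence the mate is the identity, and the square is right adjointable.

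\textbf{The ``if'' direction.}
\begin{enumerate}
\item By hypothesis, each $f\circ-$ has a right adjoint $R_Z$.
\item Adjointability of the squares makes $Z\mapsto R_Z$ into a natural transformation $\Hom_\B(-,b_1)\Rightarrow\Hom_\B(-,b_0)$, right adjoint to $f\circ-$ in the functor $2$-category.
\item By Yoneda, this transformation is $r\circ-$ with $r:=R_{b_1}(\id_{b_1})$.
\item The unit and counit transformations likewise come from $2$-cells $\id\Rightarrow rf$ and $fr\Rightarrow\id$.
\item The triangle identities hold after applying the Yoneda embedding, hence already in $\B$.
\end{enumerate}

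\textbf{Main obstacle.} The nontrivial step is the $\infty$-categorical coherence: passing from pointwise right adjoints with invertible mates to an honest adjunction of natural transformations. I would invoke the known result that a natural transformation that is pointwise a left adjoint, with adjointable naturality squares, is a left adjoint in $\Fun(\B\op,\Cat)$. For example, this follows from the equivalence between left-adjointable squares and the corresponding cartesian/cocartesian fibration statement (Lurie, HA 4.7.4).

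The rest is formal. One point needs care: the functor category must be the one of strict (not lax) transformations, so that the Yoneda embedding is $2$-fully faithful there.
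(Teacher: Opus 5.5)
Your proposal is correct and is essentially the paper's proof. You embed $\B$ into $\Fun_2(\B\op,\Cat)$ via Yoneda, use \cite[Theorem 4.6]{haugseng} to see that pointwise right adjoints with adjointable naturality squares make $f\circ-$ a left adjoint there, and reflect this back along the $2$-fully faithful Yoneda embedding; you also spell out the easy ``only if'' direction, which the paper leaves implicit. One caution: your alternative justification via HA 4.7.4 only handles diagrams indexed by $(\infty,1)$-categories, so for $2$-functors on $\B\op$ it is Haugseng's result that actually does the work.
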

\begin{proof}
The assumption tells us that $f_*:\Hom_\B(-,b_1)\to \Hom_\B(-,b_0)$ is a left adjoint in $\Fun_2(\B\op,\Cat)$, cf. \cite[Theorem 4.6]{haugseng}.

By the Yoneda lemma, implies directly that $f$ is a right adjoint in $\B\subset \Fun_2(\B\op,\Cat)$.
\end{proof}

\begin{rmk}
 In a strongly locally presentable $2$-category in the sense of \Cref{defn:strlp}, every $f$ satisfies the starting assumption of that dual version. Thus, in some sense, every $f$ is ``externally'' right adjointable, and being an internal left adjoint is a checkable condition. 
\end{rmk}
\begin{rmk}
  When specialized to $\B=\Mod_\V(\PrL)$ this criterion recovers the usual notion of internal left adjoints, cf. \cite[Definition 1.9]{Dbl}, by plugging in $Z= Z'= \V$ but the map $g$ is given by $v\otimes -: \V\to \V$ for $v\in \V$, or by plugging in $Z= \Fun(I\op,\V), Z'= \V$ and $g$ is given by the diagonal map. 
 \end{rmk}
We will also need the following version, which is simply a dual version and proved exactly the same way - we spell it out for the convenience of the reader:
\begin{lm}\label{lm:intladj}
    Let $\B$ be a $2$-category and $f:b_0\to b_1$ a morphism in $\B$. Suppose that for every $Z$, $-\circ f:\Hom_\B(b_1,Z)\to \Hom_\B(b_0,Z)$ admits a left adjoint. In this case, $f$ admits a right adjoint if and only if for any $g: Z\to Z'$, the following square is horizontally left adjointable: 
    \[\begin{tikzcd}
	{\Hom_\B(b_1,Z)} & {\Hom_\B(b_0,Z)} \\
	{\Hom_\B(b_1,Z')} & {\Hom_\B(b_0,Z')}
	\arrow["{-\circ f}", from=1-1, to=1-2]
	\arrow["{g\circ-}"', from=1-1, to=2-1]
	\arrow["{g\circ -}", from=1-2, to=2-2]
	\arrow["{-\circ f}"', from=2-1, to=2-2]
\end{tikzcd}\]
\end{lm}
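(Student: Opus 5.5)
The plan mirrors the proof of \Cref{lm:intladj0}, using the covariant Yoneda embedding instead of the contravariant one. Consider the $2$-functor $\B\op\to\Fun_2(\B,\Cat)$, $b\mapsto \Hom_\B(b,-)$. Precomposition with $f$ gives a $2$-natural transformation
\[f^*:\Hom_\B(b_1,-)\to\Hom_\B(b_0,-),\]
whose component at $Z$ is $-\circ f$. Its naturality squares in $Z$, along a $1$-morphism $g:Z\to Z'$, are exactly the squares in the statement.

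For the reverse implication, I assume the squares are horizontally left adjointable. By hypothesis each component $-\circ f$ has a left adjoint, and left adjointability of all naturality squares is precisely the condition in \cite[Theorem 4.6]{haugseng} for $f^*$ to admit a left adjoint in $\Fun_2(\B,\Cat)$. This left adjoint is computed pointwise, and the Beck--Chevalley transformations are the structure $2$-cells. Since the covariant Yoneda embedding $\B\op\hookrightarrow\Fun_2(\B,\Cat)$ is fully faithful as a $2$-functor, in the sense that it is an equivalence on hom-categories, the adjunction $L\dashv f^*$ lies entirely in the essential image. Hence it comes from an adjunction in $\B\op$ in which $f$ is the right adjoint. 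An adjunction in $\B\op$ with $f$ as right adjoint is an adjunction in $\B$ with $f$ as left adjoint, so $f$ admits a right adjoint in $\B$. The main thing to check here is this variance bookkeeping: passing to $\B\op$ reverses $1$-morphisms but not $2$-cells, so left and right adjoints are swapped.

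For the forward implication, suppose $f\dashv r$ in $\B$. Then $-\circ r\dashv -\circ f$ on $\Hom_\B(-,Z)$, naturally in $Z$, because precomposition is a $2$-functor and postcomposition with $g$ strictly commutes with precomposition. The Beck--Chevalley map of the square is then the identity $g\circ(-\circ r)\simeq (g\circ -)\circ r$, so every square is left adjointable. Uniqueness of adjoints ensures this left adjoint agrees with the one given in the hypothesis.

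The only delicate point is the invocation of \cite[Theorem 4.6]{haugseng}: we need that a transformation which is pointwise a right adjoint, with left adjointable naturality squares, is a right adjoint in the functor $2$-category. This is exactly the cited result applied to $\Fun_2(\B,\Cat)$ rather than $\Fun_2(\B\op,\Cat)$, so everything else is formal.
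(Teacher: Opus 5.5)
Your proposal is correct and is essentially the paper's proof: the paper proves this lemma as the formal dual of \Cref{lm:intladj0}, namely by applying \cite[Theorem 4.6]{haugseng} to $f^*:\Hom_\B(b_1,-)\to\Hom_\B(b_0,-)$ in $\Fun_2(\B,\Cat)$ and then using the covariant Yoneda embedding, which is exactly what you do. Your variance check (1-cells reversed, 2-cells kept, so right adjoints in $\B\op$ are left adjoints in $\B$) is right, and the explicit forward direction via the triangle identity fills in a step the paper leaves implicit.
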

 We will use this version of the criterion lemma in conjunction with: 
\begin{lm}\label{lm:jtlycoco}
    Let $f_i: A\to B_i$ be a jointly conservative family of functors. Suppose each $f_i$ admits a left adjoint $f_i^L$. Let $\eta: F\to G$ be a transformation between functors $F,G: A\to C$ admitting right adjoints. If for all $i$, $\eta f_i^L$ is an equivalence, then $\eta$ is an equivalence. 
 \end{lm}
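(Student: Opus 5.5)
The plan is to pass to right adjoints. There we can post-compose with the $f_i$ rather than pre-compose with the $f_i^L$, and then use joint conservativity. Write $F^R, G^R: C\to A$ for the right adjoints of $F$ and $G$. The transformation $\eta: F\to G$ has a mate $\eta^R: G^R\to F^R$, given explicitly by
$$G^R\to F^RFG^R\xrightarrow{F^R\eta G^R} F^RGG^R\to F^R,$$
using the unit of $F\dashv F^R$ and the counit of $G\dashv G^R$. Taking mates is an equivalence between the mapping anima $\Map(F,G)$ and $\Map(G^R,F^R)$. Equivalently, passing to right adjoints is a contravariant equivalence between the category of left adjoint functors $A\to C$ and the category of right adjoint functors $C\to A$. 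In particular, $\eta$ is an equivalence if and only if $\eta^R$ is. So it suffices to show that $\eta^R$ is an equivalence.

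Next, for each $i$ consider the composites $Ff_i^L$ and $Gf_i^L: B_i\to C$. Since adjunctions compose, these have right adjoints $f_iF^R$ and $f_iG^R$ respectively. The mate of the whiskered transformation $\eta f_i^L: Ff_i^L\to Gf_i^L$ is $f_i\eta^R: f_iG^R\to f_iF^R$. This is the standard compatibility of mates with horizontal composition (whiskering): the unit and counit of the composite adjunction $Ff_i^L\dashv f_iF^R$ are built from those of the two factors, and the triangle identities for $f_i^L\dashv f_i$ cancel out. I expect this compatibility to be the only step needing care. In an $\infty$-categorical setting, the cleanest way to see it is functoriality of the passage to right adjoints, namely the equivalence $\Fun^L(A,C)\simeq \Fun^R(C,A)\op$, together with its compatibility with precomposition by the left adjoint $f_i^L$, which corresponds to postcomposition by $f_i$.

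By hypothesis $\eta f_i^L$ is an equivalence, hence so is its mate $f_i\eta^R$. Evaluating at any object $c\in C$, we get that $f_i(\eta^R_c): f_iG^R(c)\to f_iF^R(c)$ is an equivalence for every $i$. Since the family $(f_i)_i$ is jointly conservative, $\eta^R_c$ is an equivalence for every $c$. As equivalences in functor categories are detected pointwise, $\eta^R$ is an equivalence, and therefore $\eta$ is an equivalence by the first paragraph.
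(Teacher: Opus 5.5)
Your proposal is correct and follows the same route as the paper. You pass to the mate $G^R\to F^R$, identify the mate of $\eta f_i^L$ with $f_i\eta^R$, and use joint conservativity to conclude that $\eta^R$, and hence $\eta$, is an equivalence. The only difference is that you spell out the whiskering/mate compatibility, which the paper leaves implicit.
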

 \begin{proof}
     It follows that the transformation $f_i \circ G^R\to f_i\circ F^R$ induced by $\eta$ is an equivalence. Hence by assumption, $G^R\to F^R$ is an equivalence, and hence, $F\to G$ is an equivalence. 
 \end{proof}

\subsection{Rigid algebras}\label{section:rig}
Let $\B$ be a symmetric monoidal $2$-category. We consider the following definition, originally due to Gaitsgory--Rozenblyum \cite[Chapter 1, §9]{gaitsroz}, studied in this generality in \cite{neuhauser} and in the case of $\B=\Mod_\V(\PrL)$ in \cite{LocRig}:
\begin{defn}
    A commutative algebra $A\in\CAlg(\B)$ is \emph{rigid} if the unit map $\one_\B\to A$ has a right adjoint, and the multiplication map $A\otimes A\to A$ has a right adjoint for which the lax $A$-module structure is strict.

    We let $\CAlg^\rig(\B)\subset\CAlg(\B)$ denote the full subcategory spanned by rigid commutative algebras. 
\end{defn}
One of the motivating examples is the following: 
\begin{ex}
    Let $C$ be a small symmetric monoidal category. If all objects of $C$ are dualizable, then $\Psh(C)$ with the Day convolution structure is rigid in $\B=\PrL$. Conversely, if $\Psh(C)$ is rigid, then every object in $C$ is dualizable in the idempotent-completion of $C$. 
\end{ex}
Thus, up to idempotent-completion, which in this case is very easy to control, if we can describe free rigid commutative algebras, we can in particular describe free symmetric monoidal categories with duals. 

Recall now that the ($1$D) cobordism hypothesis describes the free symmetric monoidal cateory on a dualizable object. It was conjectured by Baez and Dolan \cite{baezdolan}, with also a higher dimensional version, which we shall not discuss. Lurie sketched a proof of this conjecture in all dimensions \cite{luriecob}, and Harpaz gave a full proof in dimension $1$ \cite{harpazcob}. The work of Barkan and Steinebrunner mentioned earlier also provides a new complete proof (and generalizations) \cite{Jan}. 
\begin{thm}[\cite{harpazcob},\cite{luriecob}]
The free symmetric monoidal \category on a dualizable object is the category $\Cob^{\mathrm{1d,or}}$ of 1-dimensional oriented cobordisms between oriented 0-manifolds.
\end{thm}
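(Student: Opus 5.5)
This is the $1$-dimensional cobordism hypothesis of \cite{harpazcob,luriecob}, which the paper cites rather than proves. A self-contained argument would follow Harpaz's strategy. We aim to show that for every symmetric monoidal category $\mathcal{D}$, evaluation at the positively oriented point $+$ induces an equivalence
\[
\Fun^\otimes(\Cob^{\mathrm{1d,or}},\mathcal{D})\simeq (\mathcal{D}^{\dbl})^{\simeq}
\]
onto the groupoid of dualizable objects of $\mathcal{D}$. First, we observe that any symmetric monoidal functor out of $\Cob^{\mathrm{1d,or}}$ lands in the full subcategory $\mathcal{D}^{\dbl}$. This holds because $+$ is dualizable in $\Cob^{\mathrm{1d,or}}$, with dual $-$: the evaluation and coevaluation are the two elbow-shaped intervals, and the zig-zag identities hold because the composite is diffeomorphic rel boundary to a cylinder. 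So we may assume every object of $\mathcal{D}$ is dualizable. The space of dualizable objects is the space of quadruples $(x,x^\vee,\mathrm{ev},\mathrm{coev})$ satisfying the zig-zag identities, and this space is equivalent to the space of objects $x$, since duality data is essentially unique.

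The core step is a generators-and-relations presentation of $\Cob^{\mathrm{1d,or}}$ as a symmetric monoidal $(\infty,1)$-category. On objects, $\Cob^{\mathrm{1d,or}}$ is freely generated by $\pm$. On morphisms, Morse theory on $1$-manifolds decomposes every cobordism into elbows and permutations. The subtle point is the homotopy type of the mapping spaces. A closed component diffeomorphic to $S^1$ contributes a factor $B\mathrm{Diff}^+(S^1)\simeq BS^1$, and this must match the $S^1$-action on the dimension $\mathrm{ev}\circ\mathrm{coev}$ coming from the cyclic invariance of traces.

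We would follow Harpaz and realize both sides as complete Segal spaces over $\mathrm{Fin}_*\times\Delta$. We would introduce an intermediate category of cobordisms equipped with a generic height function, i.e. Morse data with only index-$0$ and index-$1$ critical points. Functors out of this intermediate category are freely determined by the images of $+$, $-$ and the elementary elbows. We would then show that forgetting the height function is a localization: the space of admissible height functions on a fixed cobordism is contractible after inverting the birth-death cancellations, which correspond exactly to the zig-zag identities. The main obstacle is this localization/contractibility step. It requires a parametrized Morse theory argument, à la Cerf or Igusa, about the space of generalized Morse functions on $1$-manifolds, and careful bookkeeping of the $BS^1$ factors coming from circles. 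We would try first to handle it via Lurie's quasi-unital and "unfolding" technique, reducing to the statement that the relevant fibers are weakly contractible and checking this one elementary piece at a time.

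Given the scope, within this paper we would simply invoke \cite{harpazcob} (or \cite{Jan}) for the statement, and use only its consequence. For a dualizable $b\in\B$ there is a unique symmetric monoidal functor $\Cob^{\mathrm{1d,or}}\to\B$ sending $+$ to $b$; in the $2$-categorical setting it factors through the homotopy $(\infty,1)$-category of the core on $1$-morphisms.
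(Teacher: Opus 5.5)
On the statement itself you do what the paper does. The paper gives no argument and imports the $1$-dimensional cobordism hypothesis from \cite{harpazcob,luriecob}, so the citation is the proof. Your self-contained sketch is therefore not needed, and it is not a proof either: the localization step you flag as the main obstacle is essentially the whole content of the theorem.

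Three things in your surrounding text are wrong or incomplete as written.

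\textbf{The space of duality data.} The space of quadruples $(x,x^\vee,\mathrm{ev},\mathrm{coev})$ satisfying the zig-zag identities is not equivalent to the space of dualizable objects. Pairs $(y,\mathrm{ev}\colon x\otimes y\to\one)$ exhibiting $y$ as a dual of $x$ do form a contractible space. But over such a pair, the admissible $\mathrm{coev}$ form a space equivalent to the identity component of $\Map(x,x)$. If you instead choose homotopies for both zig-zags, you get the loop space of $\Map(x,x)$ at the identity. Neither is contractible already for $x=\mathbb{S}$ in $\Sp$. What is true is that dualizability is a property, and that $\mathrm{coev}$ together with a homotopy for \emph{one} zig-zag is a contractible choice. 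This is exactly the coherence a generators-and-relations presentation of $\Cob$ has to get right: imposing both zig-zag homotopies freely would corepresent the wrong space.

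\textbf{The factorization in the $2$-categorical setting.} A symmetric monoidal functor $\Cob\to\B$ factors through the underlying $(\infty,1)$-category of $\B$, obtained by replacing each hom-category by its core, i.e.\ discarding non-invertible $2$-morphisms. It does not factor through a homotopy category, which receives a functor from $\B$ rather than mapping into it. Nor does it factor through anything that discards non-invertible $1$-morphisms, since $\mathrm{ev}$ and $\mathrm{coev}$ generally go to non-invertible ones. Dualizability of $b$ only involves $1$-morphisms and invertible $2$-cells, so the theorem applies in that underlying $(\infty,1)$-category.

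\textbf{What the paper actually uses.} Downstream, the paper relies on the stronger \Cref{lm:oplaxdbl}, imported from \cite{CCRY}. That lemma also identifies oplax symmetric monoidal transformations with left adjoints between dualizable objects. Your factorization only recovers its restriction to invertible transformations.
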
 
For simplicity, we remove the adjectives from the notation since no other cobordism category will appear in this paper:
\begin{nota}\label{nota:cob}
    Let $\Cob$ denote the category of framed $1$-dimensional cobordisms, cf. \cite{harpazcob}. 
\end{nota}

In this article, we aim to \emph{deduce} a certain generalization of this $1$D cobordism from this version, unlike Barkan and Steinebrunner who prove everything from scratch. In particular, our results do not actually pertain to the cobordism category: rather, most of what we prove would remain unchanged if we changed $\Cob$ to ``the free symmetric monoidal category on a dualizable object'', except maybe for the precise calculations at the end of \Cref{section:calculations}. 

Changing $\PrL$ with $\Mod_\V(\PrL)$ for some $\V\in\CAlg(\PrL)$ is our actual motivating example, when plugging in $\V$-presheaf categories. Via the relationship between $\V$-enriched categories and $\V$-modules \cite{heine}, we are actually describing (up to Cauchy completion) free symmetric monoidal $\V$-enriched categories with duals. However, our results are more general, and thus apply, e.g., to dualizable presentable stable categories that are not compactly generated.  

In any case, we will need a few prerequisites about rigid commutative algebras, which we now record. They can all be found in \cite{LocRig} in the case $\B=\Mod_\V(\PrL)$, and the generalization is rather mild. 

As a basic input, we will use the following duality result:
\begin{lm}\label{lm:dualrig}
Let $\B$ be a symmetric monoidal $2$-category and $A\in\CAlg^\rig(\B)$. The underlying object of $A$ is dualizable, with coevaluation given by $$\one_\B\xrightarrow{\eta}A\xrightarrow{m^R}A\otimes A $$
and evaluation 
$$A\otimes A\xrightarrow{m} A\xrightarrow{\eta^R}\one_\B$$
where $m$ is the multiplication of $A$, and $\eta$ the unit. 
\end{lm}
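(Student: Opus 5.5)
We verify the two triangle (zigzag) identities directly. Write $\mathrm{coev} = m^R\circ\eta$ and $\mathrm{ev} = \eta^R\circ m$. It suffices to show that the composite
\[ A \simeq A\otimes\one_\B \xrightarrow{A\otimes \eta} A\otimes A \xrightarrow{A\otimes m^R} A\otimes A\otimes A \xrightarrow{m\otimes A} A\otimes A\xrightarrow{\eta^R\otimes A} A \]
is equivalent to $\id_A$; the other identity follows by the symmetric argument, using commutativity.

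\textbf{Step 1: reduce $m^R$ to a single strict module map.} The key input is the rigidity hypothesis. The right adjoint $m^R : A\to A\otimes A$ is a lax map of $A\otimes A$-modules, and in particular its lax $A$-module structure is strict. Here $A$ acts on the source by multiplication and on $A\otimes A$ through either factor; the two actions agree after composing with $m$, and the definition makes $m^R$ strictly $A$-linear. Strict linearity for the action on the first factor gives an equivalence
\[ (m\otimes A)\circ(A\otimes m^R) \simeq m^R\circ m \]
as maps $A\otimes A\to A\otimes A$. This is the Frobenius-type identity, and its existence is exactly the content of the strictness assumption. Substituting it, the composite above becomes
\[ (\eta^R\otimes A)\circ m^R\circ m\circ(A\otimes\eta) \simeq (\eta^R\otimes A)\circ m^R, \]
using the unit law $m\circ(A\otimes\eta)\simeq\id_A$.

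\textbf{Step 2: show $(\eta^R\otimes A)\circ m^R\simeq\id_A$.} Pass to left adjoints; this is legitimate since every map here is a right adjoint whose left adjoint we know. The left adjoint of the composite is $m\circ(\eta\otimes A)$, because $\eta\otimes A$ is left adjoint to $\eta^R\otimes A$, as $-\otimes A$ is a $2$-functor and preserves adjunctions. By the unit law, $m\circ(\eta\otimes A)\simeq\id_A$. Its right adjoint is therefore the identity, hence so is $(\eta^R\otimes A)\circ m^R$.

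\textbf{Main obstacle.} The step needing the most care is Step 1. We must be precise about which $A$-module structure on $A\otimes A$ the strictness refers to, and check that the resulting equivalence is the Beck--Chevalley-type mate $(m\otimes A)(A\otimes m^R)\to m^R m$ coming from the lax structure. I would first write this mate explicitly from the unit and counit of $m\dashv m^R$ and the associativity square for $m$. The definition then says precisely that this mate is invertible. A secondary point is coherence: since we only need the triangle identities up to $2$-isomorphism to exhibit a duality in the homotopy $2$-category, no higher coherences are required. Finally, one could also justify identifying the left adjoint of a composite with the composite of left adjoints in Step 2, though this is standard.
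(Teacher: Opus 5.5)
Your proof is correct and is essentially the paper's argument: the paper simply asserts that the triangle identities follow directly from the linearity of $m^R$, citing Hoyois--Scherotzke--Sibilla and Neuhauser. You have spelled this out via the Frobenius identity $(m\otimes A)\circ(A\otimes m^R)\simeq m^R\circ m$, together with the observation that $(\eta^R\otimes A)\circ m^R$ is right adjoint to $m\circ(\eta\otimes A)\simeq \id_A$, and checking the zigzags up to invertible $2$-cells suffices, as you note.
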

\begin{proof}
    The triangle identites follow directly from the linearity of $m^R$, see \cite[Proposition 2.17]{HSSS} or, in this generality, \cite[Corollary 3.15, Definition 3.9]{neuhauser}. 
\end{proof}

Our first real goal is to establish a convenient criterion to check fully faithfulness of commutative algebra maps between rigid commutative algebras. Along the way, the following result is useful:
\begin{prop}[{See \cite[Proposition 4.18]{LocRig}}]\label{prop:ladjunder}
    Let $\B$ be a symmetric monoidal $2$-category, and $A\in\CAlg^\rig(\B)$. 

    Let $f:M\to N$ be a map of $A$-modules. If $f$ admits a right adjoint in $\B$, then it also has one in $\Mod_A(\B)$.
\end{prop}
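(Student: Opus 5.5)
The plan is to show that the right adjoint $f^R: N\to M$ in $\B$ carries a lax $A$-linear structure that is in fact strict. The lax structure is automatic: $f$ is strictly $A$-linear, so the mate of the equivalence $a_N\circ(A\otimes f)\simeq f\circ a_M$ gives a $2$-cell $a_M\circ(A\otimes f^R)\Rightarrow f^R\circ a_N$. Coherently, the doctrinal adjunction principle says that the right adjoint of a strict module map is a lax module map. I would cite \cite{haugseng} and \cite{neuhauser} for the coherent version in the $(\infty,2)$-setting. With this, left adjoints in $\Mod_A(\B)$ are exactly the $A$-linear maps whose right adjoint in $\B$ has a strict lax structure. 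It therefore suffices to check that a single $2$-cell is invertible.

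The key step is to express $f^R$ using duality, via \Cref{lm:dualrig}. First, the action map $a_M: A\otimes M\to M$ has a right adjoint in $\Mod_A(\B)$. Indeed, $a_M$ is the base change $m\otimes_A M$ of $m:A\otimes A\to A$, and $m$ has an $A\otimes A$-linear right adjoint, since the rigidity hypothesis says $m^R$ is strictly $A$-bilinear. Writing $a_M^R = m^R\otimes_A M$ makes this precise, provided relative tensor products exist in $\B$. Alternatively, and without relative tensor products, I would write it directly as
$$M\simeq \one\otimes M\xrightarrow{\eta} A\otimes M\xrightarrow{m^R\otimes M} A\otimes A\otimes M\xrightarrow{A\otimes a_M} A\otimes M.$$
I would then verify that this is right adjoint to $a_M$, using the linearity of $m^R$ exactly as in the proof of \Cref{lm:dualrig}. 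The unit and counit are built from those of $m\dashv m^R$ and of $\eta$.

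Next, I would check the invertibility of the mate square
$$a_M\circ(A\otimes f^R)\Rightarrow f^R\circ a_N.$$
The idea is to pass to mates again, in the other direction, using the right adjoints $a_M^R$ and $a_N^R$ just constructed. Under mate correspondence, the mate square is invertible if and only if the square
$$(A\otimes f^R)\circ a_N^R \Leftarrow a_M^R\circ f^R$$
is invertible. This is a standard fact: in a commuting square of left adjoints whose rows are also left adjoints, the two mates are conjugate. By the explicit formula, $a_N^R$ is built from $\eta$, $m^R$ and $a_N$ by composition. Each piece commutes strictly with $f$ or $f^R$: the pieces $\eta\otimes -$ and $m^R\otimes -$ act on the $A$-factors only, so they commute with $f^R$ by interchange in the monoidal structure. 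The only remaining piece, $A\otimes a_N$, is then handled by the original strict linearity of $f$, with $f^R$ on the other side. Chasing through, the $2$-cell becomes an identity composed with counit/unit cancellations, hence invertible.

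The main obstacle is this last coherence bookkeeping: making sure that the $2$-cell obtained from the explicit formula really is the canonical mate, and not some other map. I would handle it by working in the universal situation. Since everything is natural in $M$, it suffices to treat the free module case $M=A\otimes X$, where $a_M^R= m^R\otimes X$, and then reduce to the bilinearity of $m^R$. Failing that, I would simply follow \cite[Proposition 4.18]{LocRig} verbatim, checking that each step only uses $2$-categorical manipulations valid in any symmetric monoidal $2$-category.
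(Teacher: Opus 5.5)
\paragraph{Verdict.} The proposal has a genuine gap: the one non-formal step, invertibility of the lax structure on $f^R$, is never established.

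\paragraph{What is fine.} Doctrinal adjunction (\cite[Proposition 2.22]{neuhauser}) makes $f^R$ lax $A$-linear via $\lambda\colon a_M\circ(A\otimes f^R)\Rightarrow f^R\circ a_N$. This $\lambda$ is the horizontal mate of the invertible cell $\alpha\colon f\circ a_M\simeq a_N\circ(A\otimes f)$. Your formula for $a_M^R$ is \Cref{lm:actionladj}.

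\paragraph{Where it fails.} The ``standard fact'' you invoke is false. Taking mates is a bijection on $2$-cells but does not preserve invertibility, and for a commuting square of left adjoints the horizontal and vertical Beck--Chevalley conditions are independent. For instance, in $\Cat$, the square with top $i_0\colon *\to\Delta^1$, right side $\Delta^1\to *$ and identities elsewhere has invertible horizontal mate, while its vertical mate is the non-invertible arrow $i_0\Rightarrow i_1$.

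What \emph{is} automatic here is weaker. The mate of $\lambda$ along the action adjunctions, $\lambda^\#\colon (A\otimes f^R)\circ a_N^R\Rightarrow a_M^R\circ f^R$, is invertible because it is the conjugate of $\alpha$. That alone does not give invertibility of $\lambda$.

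Your explicit chase is circular. Substitute $a^R\simeq (A\otimes a)\circ((m^R\circ\eta)\otimes -)$ and use interchange. The comparison $a_M^R f^R\Rightarrow (A\otimes f^R)a_N^R$ you produce is then $A\otimes\lambda$ whiskered with $(m^R\circ\eta)\otimes N$. That is, it is the image $D(\lambda)$ of $\lambda$ under the duality equivalence $\Hom_\B(A\otimes N,M)\simeq\Hom_\B(N,A\otimes M)$ of \Cref{lm:dualrig}. ``Strict linearity of $f$ with $f^R$ on the other side'' is exactly $\lambda$ itself. So the chase only shows that $D(\lambda)$ is invertible if and only if $\lambda$ is.

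Note also that $D(\lambda)$ runs opposite to $\lambda^\#$; this matches the $\Leftarrow$ you wrote, so they are different $2$-cells. The fallback to free modules does not work either: $\B$ is arbitrary, modules need not be built from free ones, and $A$-linear maps between free modules need not come from $\B$.

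\paragraph{The missing input.} You need that, under the identifications $a^R\simeq D(a)$ of \Cref{lm:actionladj}, the cell $D(\lambda)$ is inverse to $\lambda^\#$. This is a genuine coherence statement, and it uses that $\lambda$ is a lax \emph{module} structure rather than an arbitrary $2$-cell.

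The paper supplies it through \Cref{prop:laxstrict}:
\begin{enumerate}
\item $f^R$ with $\lambda$ is a module over the rigid algebra $\id_A$ in $\Fun_\oplax(\Delta^1,\B)$.
\item By \Cref{lm:actionladj} applied in that $2$-category, the action morphism has a right adjoint there, namely its duality mate, whose $2$-cell is $D(\lambda)$.
\item By \cite[Theorem 4.6]{haugseng}, the $2$-cell of a right adjoint in the oplax arrow category is the inverse of the mate, hence invertible.
\end{enumerate}
So $D(\lambda)$, and therefore $\lambda$, is invertible; combined with doctrinal adjunction this gives the proposition. Replacing your mate/chase step by this argument, or by an actual verification that $D(\lambda)\circ\lambda^\#=\id$, would close the gap.
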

We will deduce this from a stronger result, the proof of which we learned from Leor Neuhauser (whom we thank for letting us include the argument here - any mistake is the author's): 
\begin{prop}\label{prop:laxstrict}
    Let $\B$ be a symmetric monoidal $2$-category, and $A\in\CAlg^\rig(\B)$. Any lax $A$-linear map between $A$-modules is strictly $A$-linear. 
\end{prop}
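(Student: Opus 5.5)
The plan is to produce a 2-cell inverse to the lax structure by passing through the duality of \Cref{lm:dualrig}. Let $f:M\to N$ be a lax $A$-linear map, with structure 2-cell $\alpha:\mu_N\circ(A\otimes f)\Rightarrow f\circ\mu_M$, where $\mu_M,\mu_N$ are the actions. We must show that $\alpha$ is invertible. The idea is that rigidity lets us \emph{mate} $\alpha$ twice:
\begin{itemize}
\item once along the duality $A\simeq A^\vee$;
\item once along the adjunctions $\mu\dashv\mu^R$.
\end{itemize}
This produces a 2-cell $\beta$ in the opposite direction, which we then show is inverse to $\alpha$.

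\textbf{Step 1 (right adjoints of actions).} For any $A$-module $M$, set
$$\mu_M^R:=(A\otimes\mu_M)\circ(m^R\otimes M)\circ(\eta\otimes M):M\to A\otimes M.$$
Using strict $A$-linearity of $m^R$, we check that this is right adjoint to $\mu_M$ in $\B$, with unit and counit built from those of $m\dashv m^R$. This is the same computation as in \Cref{lm:dualrig}, carried out relatively to $M$.

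\textbf{Step 2 (transpose along the duality).} By \Cref{lm:dualrig}, the coevaluation of $A$ is $m^R\eta$, and its evaluation is $\eta^Rm$. Transposition along this duality is an equivalence
$$\Hom_\B(A\otimes M,N)\simeq\Hom_\B(M,A\otimes N),$$
and under it a map $g$ goes to $(A\otimes g)\circ(\mathrm{coev}\otimes M)$. The transpose of $\mu_N\circ(A\otimes f)$ is $(A\otimes\mu_N)(m^R\eta\otimes f)=\mu_N^R\circ f$, and the transpose of $f\circ\mu_M$ is $(A\otimes f)\circ\mu_M^R$. So $\alpha$ corresponds to a 2-cell
$$\alpha':\mu_N^R f\Rightarrow (A\otimes f)\mu_M^R.$$

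\textbf{Step 3 (mate along the actions).} Now take the mate of $\alpha'$ with respect to the adjunctions $\mu_M\dashv\mu_M^R$ and $\mu_N\dashv\mu_N^R$. Explicitly, $\beta$ is the composite
$$f\mu_M\Rightarrow \mu_N\mu_N^Rf\mu_M\Rightarrow\mu_N(A\otimes f)\mu_M^R\mu_M\Rightarrow\mu_N(A\otimes f),$$
where the first arrow is the unit of $\mu_N\dashv\mu_N^R$, the middle one is $\alpha'$, and the last is the counit of $\mu_M\dashv\mu_M^R$. This gives
$$\beta:f\circ\mu_M\Rightarrow\mu_N\circ(A\otimes f),$$
which points in the opposite direction to $\alpha$.

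\textbf{Step 4 (the main obstacle).} It remains to show that $\beta\alpha$ and $\alpha\beta$ are identities. To do this:
\begin{itemize}
\item Unwind $\beta$ in terms of $\alpha$, the unit and counit of $m\dashv m^R$, and the linearity equivalences of $m^R$.
\item Use the unitality and associativity coherences of the lax structure $\alpha$, in particular its compatibility with $A\otimes A\to A$ and with $\eta$, to slide $\alpha$ past the structure maps.
\item Collapse what remains using the triangle identities of \Cref{lm:dualrig} and of Step 1.
\end{itemize}
I expect this bookkeeping of 2-cells to be the main difficulty. I would first try to reduce it to the universal case: $M=A\otimes X$, $N=A\otimes Y$ free modules, where the lax structure on $f$ is determined by its restriction along $\eta$. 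Alternatively, I would use the observation that a lax $A$-linear map is the same as a lax module map for the monad $A\otimes-$, and that for rigid $A$ this monad is also a comonad via $m^R$, with compatible structures (a Frobenius-type condition). For such Frobenius monads, lax morphisms are automatically strong by a standard mate argument.

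Finally, \Cref{prop:ladjunder} follows from the proposition. The right adjoint $f^R$ of an $A$-linear map $f$ carries a canonical lax $A$-linear structure, namely the mate of the strict structure on $f$. That structure is then strict by the proposition, and the unit and counit of $f\dashv f^R$ are $A$-linear, so the adjunction lifts to $\Mod_A(\B)$.
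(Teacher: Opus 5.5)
\textbf{There is a genuine gap, and it sits exactly where the content of the proposition lies.}

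\textbf{Step 3 does not typecheck.} For the adjunction $\mu_N\dashv\mu_N^R$ the unit is $\id_{A\otimes N}\Rightarrow\mu_N^R\mu_N$ and the counit is $\mu_N\mu_N^R\Rightarrow\id_N$.
\begin{itemize}
\item Your first arrow $f\mu_M\Rightarrow\mu_N\mu_N^R f\mu_M$ needs a 2-cell $\id_N\Rightarrow\mu_N\mu_N^R$.
\item Your last arrow needs a 2-cell $\mu_M^R\mu_M\Rightarrow\id_{A\otimes M}$.
\end{itemize}
These are the unit and counit of an adjunction $\mu^R\dashv\mu$. That adjunction is not available: $\mu$ is in general not right adjoint to $\mu^R$. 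For instance, for $A=\mathrm{Sh}(X;\Sp)$ with $X$ compact Hausdorff, $m^R=\Delta_*$ has right adjoint $\Delta^!\neq\Delta^*=m$. With your convention $\alpha:\mu_N(A\otimes f)\Rightarrow f\mu_M$, the mate correspondence for $\mu\dashv\mu^R$ pairs $\alpha$ with
\[
\alpha^\sharp:(A\otimes f)\mu_M^R\Rightarrow\mu_N^R f .
\]
This points \emph{opposite} to your transpose $\alpha':\mu_N^Rf\Rightarrow(A\otimes f)\mu_M^R$, and $\alpha'$ has no mate with respect to these adjunctions. So ``transpose, then mate'' cannot be formed. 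The natural well-defined candidate is ``mate, then transpose'', i.e.\ the duality transpose of $\alpha^\sharp$. What must be shown is that $\alpha'$ and $\alpha^\sharp$ are mutually inverse.

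\textbf{Step 4 is left as unexecuted bookkeeping, and your suggested shortcuts do not close it.}
\begin{itemize}
\item For a \emph{lax} map out of a free module, the lax structure is not a priori determined by its restriction along $\eta$. That is precisely what strictness would give you; compare lax monoidal functors out of a free monoidal category.
\item No reduction from general $M,N$ to free modules is provided.
\item The ``standard mate argument'' for Frobenius monads is exactly the missing step, stated rather than proved.
\end{itemize}

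\textbf{How the paper supplies this comparison without bookkeeping.} The lax map $f$, with its structure $\alpha$, is a module over $\id_A$, which is a rigid commutative algebra in $\Fun_\oplax(\Delta^1,\B)$. By \Cref{lm:actionladj} applied in that $2$-category, the action morphism $\id_A\otimes f\to f$ is a left adjoint. Its right adjoint is its duality mate, i.e.\ the square with components $\mu_M^R,\mu_N^R$ and 2-cell $\alpha'$. By Haugseng's characterization of adjunctions in oplax functor $2$-categories, this right adjoint is a strict map of arrows. Hence $\alpha'$ is invertible (indeed inverse to $\alpha^\sharp$). Therefore $\alpha$ is invertible, since transposition along the duality $A\simeq A^\vee$ is an equivalence of hom-categories.

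Your final deduction of \Cref{prop:ladjunder} from the proposition is fine.
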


We begin with the following: 
\begin{lm}\label{lm:actionladj}
    Let $\B$ be a symmetric monoidal $2$-category and $A$ a rigid commutative algebra in $\B$. For any $A$-module $M$, the action map $\rho: A\otimes M\to M$ is left adjoint in $\Mod_A(\B)$. 

    Its right adjoint is, on underlying objects, equivalent to its duality mate. 
\end{lm}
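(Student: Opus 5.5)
The plan is to reduce to the multiplication map $m: A\otimes A\to A$ by base change. Write the action as the composite
$$A\otimes M\simeq (A\otimes A)\otimes_A M\xrightarrow{m\otimes_A M} A\otimes_A M\simeq M,$$
where $A\otimes M$ carries the $A$-module structure from the left factor of $A$. This exhibits $\rho$ as $m\otimes_A M$, regarding $m$ as a map of $A$-bimodules. To avoid relative tensor products, which may not exist in a general $\B$, I will instead work with the functor $-\otimes_A M$ only on free modules. Concretely, $\rho$ is obtained from $m\otimes M: A\otimes A\otimes M\to A\otimes M$ together with the actions, using the standard identification of free $A$-modules.

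First, by rigidity, $m$ has a right adjoint $m^R$ in $\B$ whose lax $A\otimes A$-linear structure (equivalently, lax $A$-bimodule structure) is strict. So $m\dashv m^R$ is an adjunction in $\Mod_{A\otimes A}(\B)$. Next, I use the $2$-functor
$$\Mod_{A\otimes A}(\B)\to \Mod_A(\B),$$
given by "base change along the second factor to $M$." On free objects it is $X\mapsto$ the coequalizer-free expression: for $A\otimes A$ it gives $A\otimes M$, and for $A$ it gives $M$. I will make this precise as follows. The category of $A$-bimodules acts on $\Mod_A(\B)$ via $M$: the relevant data are exactly $A\otimes A\mapsto A\otimes M$ and $A\mapsto M$, and a $2$-functor preserves adjunctions. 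If defining this functor in general is problematic, I will instead write down the unit and counit directly. The right adjoint is
$$\rho^R: M\xrightarrow{\ } A\otimes M,$$
defined as the composite $M\simeq A\otimes_A M \to (A\otimes A)\otimes_A M$, or concretely as
$$M\xrightarrow{\eta\otimes M}A\otimes M\xrightarrow{m^R\otimes M}A\otimes A\otimes M\xrightarrow{A\otimes\rho}A\otimes M.$$
This map is $A$-linear because $m^R$ is bilinear. The unit and counit come from those of $m\dashv m^R$, and the triangle identities follow from theirs together with the module axioms.

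For the second claim, the duality mate of $\rho$ under \Cref{lm:dualrig} is
$$M\xrightarrow{\mathrm{coev}\otimes M} A\otimes A\otimes M\xrightarrow{A\otimes\rho}A\otimes M,$$
with $\mathrm{coev}=m^R\eta$. This is exactly the formula above, so the identification is immediate once $\rho^R$ is given in that form.

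The main obstacle is verifying the triangle identities for the explicit formula in a general $2$-category, which requires coherent use of the strict linearity of $m^R$. I expect to handle this by observing that the composite $A\otimes A\otimes M\to A\otimes M$ exhibits the structure as induced from the universal case $M=A$ (the free module), where the statement is the rigidity hypothesis. I would then use functoriality in $M$ for the maps $A\otimes A\otimes M\to A\otimes M$.
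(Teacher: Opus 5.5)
\textbf{There is a genuine gap in constructing the $2$-functor $-\otimes_A M$.} Your strategy matches the paper's: realize $\rho$ as $m\otimes_A M$ and transport the bimodule adjunction $m\dashv m^R$ along $-\otimes_A M$. But you never supply this $2$-functor, and it carries all the weight. You observe that relative tensor products need not exist in $\B$, and then assert that bimodules ``act on $\Mod_A(\B)$ via $M$'' without constructing anything.

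Restricting to free modules does not help, since $A$ is not a free $A\otimes A$-module. You would need a $2$-functor on a sub-$2$-category of bimodules containing $A\otimes A$, $A$, $m$, $m^R$, and the unit and counit, and nothing in a general $\B$ provides one.

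The missing idea, which is how the paper argues, is to first embed $\B$ into $\Fun(\B\op,\Cat)$. This target is $2$-cocomplete, and its tensor product commutes with $2$-colimits in each variable. There $-\otimes_A M$ is an honest $2$-functor from $A$-bimodules to $A$-modules. Hence $\rho\simeq m\otimes_A M$ is an $A$-linear left adjoint, with right adjoint $m^R\otimes_A M\colon M\to (A\otimes A)\otimes_A M\simeq A\otimes M$. Since $\Mod_A(\B)\to\Mod_A(\Fun(\B\op,\Cat))$ is fully faithful on hom-categories and every object involved lies in its image, the adjunction restricts to $\Mod_A(\B)$.

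\textbf{Your fallback does not close the gap.}
\begin{itemize}
\item The explicit composite $(A\otimes\rho)\circ(m^R\otimes M)\circ(\eta\otimes M)$ is $A$-linear only up to homotopy by your bilinearity argument; note that $\eta\otimes M$ is itself not $A$-linear. So it does not give a morphism of $\Mod_A(\B)$ with coherent structure.
\item The unit, counit and triangle identities would have to be produced as $A$-linear $2$-cells, not merely checked on underlying maps in $\B$. You do not do this.
\item The reduction to ``the universal case $M=A$'' plus ``functoriality in $M$'' is not valid. $A$ is only the free module on $\one$, not a universal $A$-module. The free case $\rho_{A\otimes X}=m\otimes X$ is immediate, but left adjointness of $\rho_M$ is not transported along module maps. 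Expressing a general $M$ through free modules, via a bar construction, needs exactly the colimits you set out to avoid.
\end{itemize}

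Once $\rho^R\simeq m^R\otimes_A M$ is available, the identification with the duality mate goes as you say. Your ``or concretely as'' step is made precise by the paper's commutative diagram comparing $m^R\otimes_A M$ with the explicit composite, and that part is routine.
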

\begin{proof}
    Up to embedding $\B$ in $\Fun(\B\op,\Cat)$, we may assume $\B$ is cocomplete and its tensor product commutes with $2$-colimits in each variable. 

    Now, the multiplication map $A\otimes A\to A$ is an $A$-bimodule left adjoint. Since $\otimes_A$ is a $2$-functor, we find that $(A\otimes A)\otimes_A M\to A\otimes_A M$ is an $A$-linear left adjoint - but this map is equivalent to the action map, so this proves the result. 

    To verify what the underlying map of the right adjoint is, we first observe that the duality mate is given by the following composite, by \Cref{lm:dualrig}: $$M\xrightarrow{\eta \otimes M}A\otimes M\xrightarrow{m^R\otimes M} A\otimes A\otimes M \xrightarrow{A\otimes\rho} A\otimes M$$ 

    With this in mind, the claim follows from the proof above and the following commutative diagram: 
    \[\begin{tikzcd}
	M & {A\otimes M} & {A\otimes A\otimes M} \\
	& {A\otimes_A M} & {(A\otimes A)\otimes_A M} & {A\otimes M}
	\arrow["{\eta\otimes M}", from=1-1, to=1-2]
	\arrow["\simeq"', from=1-1, to=2-2]
	\arrow["{m^R\otimes M}", from=1-2, to=1-3]
	\arrow[from=1-2, to=2-2]
	\arrow[from=1-3, to=2-3]
	\arrow["{A\otimes \rho}", from=1-3, to=2-4]
	\arrow["{m^R\otimes_AM}"', from=2-2, to=2-3]
	\arrow["\simeq"', from=2-3, to=2-4]
\end{tikzcd}\]
\end{proof}
\begin{proof}[Proof of \Cref{prop:laxstrict}]
First, we note that since we already have the coherent lax $A$-linear structure on $f$, the claim that it is strictly linear can be checked in the homotopy $2$-category of $\B$, so that there are very few coherences to check for every claim. 

    Consider now the symmetric monoidal $2$-category $\Fun_\oplax(\Delta^1,\B)$, given by the pointwise symmetric monoidal structure. The identity map $\id_A$ forms a rigid commutative algebra therein, and the lax $A$-linear map $f: M\to N$ corresponds to a module in $\Fun_\oplax(\Delta^1,\B)$ over $\id_A$ (the $\lax$ became an $\oplax$ because of non-commutativity of the Gray tensor product).

    Thus, by \Cref{lm:actionladj}, the action map $\id_A\otimes f\to f$ has a right adjoint in $\Fun_\oplax(\Delta^1,\B)$. By \cite[Theorem 4.6]{haugseng}, this right adjoint must be a strict map of arrows, that is, the mate 
    \[\begin{tikzcd}
	{A\otimes M} & {A\otimes N} \\
	M & N
	\arrow["{A\otimes f}", from=1-1, to=1-2]
	\arrow[from={0.2}{0.8}, Rightarrow, from=1-1, to=2-2]
	\arrow["{\rho_M^R}", from=2-1, to=1-1]
	\arrow["f"', from=2-1, to=2-2]
	\arrow["{\rho_N^R}"', from=2-2, to=1-2]
\end{tikzcd}\] 
of the lax naturality square commutes strictly. Since the vertical maps are given by duality mates of $\rho_M, \rho_N$ respectively, see \Cref{lm:actionladj}, it follows that the original square also commuted strictly. 
\end{proof}

\begin{proof}[Proof of \Cref{prop:ladjunder}]
     By \cite[Proposition 2.22]{neuhauser}, the right adjoint $f^R$ is canonically a right adjoint in $A$-modules and lax $A$-linear maps. By \Cref{prop:laxstrict}, it is therefore also an adjoint in strict $A$-linear maps, as was to be shown. 
\end{proof}
The following is (a priori weaker than, but in light of \Cref{prop:ladjunder}, equivalent to) \cite[Proposition 3.20]{neuhauser} (see also the second half of \cite[Observation 4.51]{LocRig}): 
\begin{lm}\label{lm:existadj}
Let $\B$ be a symmetric monoidal $2$-category and $f:A\to B$ a morphism in $\CAlg^\rig(\B)$. $f$ admits a right adjoint in $\B$.
\end{lm}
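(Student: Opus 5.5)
We construct the right adjoint of $f$ explicitly, using the dualities of \Cref{lm:dualrig}, and then check the triangle identities by reducing them to the rigidity data of $A$ and $B$. The natural candidate is
$$f^R: B\xrightarrow{\eta_A\otimes B} A\otimes B\xrightarrow{A\otimes f\otimes B}A\otimes B\otimes B\xrightarrow{\cdots}$$
Stated more cleanly, $B$ becomes an $A$-module via $f$, and we want to exhibit $f$ as the composite
$$A\simeq A\otimes\one_\B\xrightarrow{A\otimes\eta_B}A\otimes B\xrightarrow{\rho}B,$$
where $\rho = m_B\circ(f\otimes B)$ is the $A$-action on $B$. This is a factorization of $f$ as a map of $A$-modules, since $m_B\circ(f\otimes\eta_B)=f$. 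It therefore suffices to show that each of the two factors has a right adjoint.

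The second factor is the action map of the $A$-module $B$. By \Cref{lm:actionladj} it admits a right adjoint, even in $\Mod_A(\B)$, because $A$ is rigid.

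The first factor is $A\otimes\eta_B$. Since $B$ is rigid, $\eta_B:\one_\B\to B$ has a right adjoint $\eta_B^R$ in $\B$. Tensoring with $A$ is a $2$-functor $\B\to\B$, so it preserves adjunctions, and $A\otimes\eta_B$ has right adjoint $A\otimes\eta_B^R$.

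Composites of left adjoints are left adjoints. Hence $f\simeq\rho\circ(A\otimes\eta_B)$ admits a right adjoint in $\B$, namely $(A\otimes\eta_B^R)\circ\rho^R$.

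The only point requiring care is the identification $f\simeq\rho\circ(A\otimes\eta_B)$ as $1$-morphisms in $\B$. This follows from unitality of $m_B$ and the fact that $f$ is a map of commutative algebras. Concretely, $m_B\circ(f\otimes B)\circ(A\otimes\eta_B)=m_B\circ(B\otimes\eta_B)\circ f\simeq f$. Since we only need an equivalence of $1$-morphisms, and being a left adjoint is invariant under equivalence, no higher coherences are involved.

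We expect the main obstacle to be making sure \Cref{lm:actionladj} applies. Its statement is for an arbitrary $A$-module $M$, and $B$ restricted along $f$ is one, so this is immediate. Only the rigidity of $A$ is used there, while the rigidity of $B$ enters only through the adjoint $\eta_B^R$.

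As a consistency check, in $\PrL$ this recovers the statement that a symmetric monoidal colimit-preserving functor between rigid categories has a colimit-preserving right adjoint. Using \Cref{prop:ladjunder}, the adjoint is moreover $A$-linear, which matches the stronger form of \cite[Proposition 3.20]{neuhauser}.
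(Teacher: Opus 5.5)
Your argument is correct, and it does more than the paper does at this point. The paper gives no proof of this lemma. It records it as a formally weaker form of \cite[Proposition 3.20]{neuhauser}, with a pointer to \cite[Observation 4.51]{LocRig}. The cited statement also asserts that $f^R$ is $A$-linear, which the paper otherwise recovers only through \Cref{prop:ladjunder}.

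Your factorization $f\simeq\rho\circ(A\otimes\eta_B)$ gives a self-contained proof from \Cref{lm:actionladj}. The identity itself uses only unitality of $m_B$. The algebra-map property of $f$ is needed elsewhere, to make $\rho=m_B\circ(f\otimes B)$ an $A$-action.

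Your route yields two things beyond the citation. First, as you note, it uses only that $A$ is rigid and that $\eta_B$ is a left adjoint, so it proves a slightly more general statement. Second, it gives $A$-linearity of $f^R$ directly, for three reasons:
\begin{itemize}
\item $A\otimes\eta_B\dashv A\otimes\eta_B^R$ is the image of $\eta_B\dashv\eta_B^R$ under the free-module $2$-functor $\B\to\Mod_A(\B)$.
\item \Cref{lm:actionladj} already provides $\rho\dashv\rho^R$ in $\Mod_A(\B)$, not just in $\B$.
\item The factorization holds in $\Mod_A(\B)$, because both sides are the $A$-linear maps out of the free module $A$ corresponding to $\eta_B$.
\end{itemize}
So $f$ is a left adjoint in $\Mod_A(\B)$ without passing through \Cref{prop:ladjunder} or \Cref{prop:laxstrict}.

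On presentation: unwinding the formula for $\rho^R$ in \Cref{lm:actionladj}, your right adjoint is
\[
f^R=(A\otimes\eta_B^R)\circ\rho^R\simeq(A\otimes\eta_B^Rm_B)\circ(A\otimes f\otimes B)\circ((m_A^R\eta_A)\otimes B).
\]
This is exactly the transpose of $f$ under the self-dualities of \Cref{lm:dualrig}. It is what your opening display was reaching for, but as written that display omits $m_A^R$ and is ill-typed. Either complete it in this form or delete it. In either case, also drop the opening promise to check triangle identities via duality, since the argument you actually give never does this.
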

As a corollary, we obtain the desired fully faithfulness criterion: 
\begin{cor}\label{cor:rigff}
    Let $\B$ be a symmetric monoidal $2$-category, and let $f:A\to B$ be a map in $\CAlg^\rig(\B)$. By \Cref{lm:existadj}, it has a right adjoint. 

    In this situation, $f$ is fully faithful, i.e. the unit $\id_A\to f^Rf$ is an equivalence, if and only if it is fully faithful ``on the unit'', i.e. $\eta_A\to f^Rf\eta_A$ is an equivalence, where $\eta_A: \one_\B\to A$ is the unit. 
\end{cor}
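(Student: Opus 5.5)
One direction is trivial: if the unit $\id_A\to f^Rf$ is an equivalence, then whiskering it with $\eta_A$ gives an equivalence $\eta_A\to f^Rf\eta_A$. The content lies in the converse. The plan is to use \Cref{prop:ladjunder} to upgrade the adjunction $f\dashv f^R$ to one in $\Mod_A(\B)$. After that, the unit $\id_A\to f^Rf$ is an $A$-linear transformation between $A$-linear endomorphisms of the free module $A$, and such transformations are detected by restriction along $\eta_A$.

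The first step is to regard $B$ as an $A$-module via $f$. Because $f$ is a map of commutative algebras, it is $A$-linear as a map $A\to B$ of $A$-modules. By \Cref{lm:existadj}, $f$ has a right adjoint in $\B$. \Cref{prop:ladjunder} then shows that this right adjoint is also a right adjoint in $\Mod_A(\B)$, with $A$-linear unit $u:\id_A\to f^Rf$. Moreover, the forgetful functor $\Mod_A(\B)\to\B$ is a $2$-functor, so it sends this adjunction to the adjunction in $\B$. Consequently, the underlying transformation of $u$ is the unit in $\B$, and it suffices to show that $u$ is invertible as a $2$-morphism in $\Mod_A(\B)$.

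The second step is the free-module argument. Since $A$ is free on $\one_\B$, there is an equivalence of categories
\[\Hom_{\Mod_A(\B)}(A,M)\simeq \Hom_\B(\one_\B,M),\qquad g\mapsto g\circ\eta_A,\]
and it is natural in $M$. Applying this with $M=A$ shows that the $2$-morphism $u:\id_A\to f^Rf$ in $\Hom_{\Mod_A(\B)}(A,A)$ is invertible if and only if its image $u\eta_A:\eta_A\to f^Rf\eta_A$ in $\Hom_\B(\one_\B,A)$ is invertible. That image is exactly the hypothesis, which proves the converse.

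The main obstacle is making sure the free-forgetful equivalence holds at the level of mapping categories, not merely mapping anima, in the $2$-categorical setting. If this causes trouble, I would use an alternative route. Write $f^Rf$ as an $A$-linear endomorphism $\phi$ of $A$. Then $A$-linearity identifies $\phi$ with the composite $A\simeq A\otimes\one_\B\xrightarrow{A\otimes\phi\eta_A}A\otimes A\xrightarrow{m}A$, and $u$ corresponds to $m\circ(A\otimes u\eta_A)$. Invertibility of $u\eta_A$ therefore implies invertibility of $u$, which again reduces the converse to the hypothesis.
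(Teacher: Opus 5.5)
Your proposal is correct and follows essentially the paper's argument. Both use the $A$-linearity supplied by \Cref{prop:ladjunder} to write an $A$-linear endomorphism of $A$ as $m\circ(A\otimes(-)\eta_A)$, which reduces everything to the unit. The only difference is how the last step is closed: the paper derives an abstract equivalence $f^Rf\simeq\id_A$ and then invokes \cite[Lemma 3.3.1]{AmbiHeight}, or ``traces through the equivalences''. You instead keep track of the unit itself as an $A$-linear $2$-cell, which is exactly that tracing and uses the hypothesis $\eta_A\to f^Rf\eta_A$ in its stated form.
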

\begin{proof}
    $f^Rf\simeq f^R(\mu_B f\eta_A \otimes f)$. By \Cref{prop:ladjunder}, $f^R$ is $A$-linear and hence the latter is equivalent to $\mu_A\circ (f^R f\eta_A \otimes  \id_A) $. 

    By assumption, this is equivalent to $\mu_A\circ (\eta_A\otimes\id_A)\simeq \id_A$. 

    One can either trace through the equivalences and check that the relevant map $\id_A\to f^Rf$ is an equivalence, or to avoid having to do this, use \cite[Lemma 3.3.1]{AmbiHeight} to deduce that if $f^Rf\simeq \id_A$, then the unit map $\id_A\to f^Rf$ is an equivalence. 
\end{proof}
There is a dual version which is also convenient, and is a generalization of \cite[Proposition 4.35]{LocRig}: 
\begin{cor}\label{cor:rigff2}
    Let $\B$ be a symmetric monoidal $2$-category, and let $f:A\to B$ be a map in $\CAlg^\rig(\B)$. By \Cref{lm:existadj}, it has a right adjoint. 

    In this situation, $f$ is fully faithful if and only if the map $\eta_A^R\to \eta_A^Rf^Rf\simeq \eta_B^Rf$ is an equivalence, i.e. $f$ is ``fully faithful out of the unit''. 
\end{cor}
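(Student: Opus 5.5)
The plan is to run the argument of \Cref{cor:rigff} again, replacing ``precompose with the unit $\eta_A$'' by ``postcompose with $\eta_A^R$''. Duality of $A$ (\Cref{lm:dualrig}) makes the second operation just as faithful as the first on $A$-linear endomorphisms of $A$.

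\textbf{Step 1: the unit is $A$-linear.} View $B$ as an $A$-module via $f$, so that $f:A\to B$ is a map of $A$-modules. By \Cref{lm:existadj}, $f$ has a right adjoint in $\B$. By \Cref{prop:ladjunder}, the adjunction $f\dashv f^R$ then lives in $\Mod_A(\B)$. Hence $g:=f^Rf$ is an $A$-linear endomorphism of $A$, and the unit $u:\id_A\to g$ is a $2$-morphism in $\Hom_{\Mod_A(\B)}(A,A)$. We must show that $u$ is an equivalence if and only if $\eta_A^R u$ is one.

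\textbf{Step 2: identify the target.} Since $f$ is a map of commutative algebras, $f\eta_A\simeq\eta_B$. Passing to right adjoints gives $\eta_A^Rf^R\simeq(f\eta_A)^R\simeq\eta_B^R$. Therefore $\eta_A^Rf^Rf\simeq\eta_B^Rf$, which is the identification stated in the corollary.

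\textbf{Step 3: conservativity of $\eta_A^R\circ-$.} Consider the functor
\[
\Phi:\Hom_{\Mod_A(\B)}(A,A)\to\Hom_\B(A,\one_\B),\qquad \varphi\mapsto\eta_A^R\varphi .
\]
We claim $\Phi$ is an equivalence. This factors in two pieces.
\begin{itemize}
\item Restriction along $\eta_A$ gives the free-module equivalence $\Hom_{\Mod_A(\B)}(A,A)\simeq\Hom_\B(\one_\B,A)$. Its inverse sends $x$ to $m\circ(x\otimes\id_A)$.
\item Since $A$ is dualizable with evaluation $\eta_A^R m$ (\Cref{lm:dualrig}), the duality functor $\Hom_\B(\one_\B,A)\to\Hom_\B(A,\one_\B)$, $x\mapsto \eta_A^R m(x\otimes\id_A)$, is an equivalence of categories.
\end{itemize}
The composite of these two equivalences sends $\varphi$ to $\eta_A^R m(\varphi\eta_A\otimes\id_A)\simeq\eta_A^R\varphi$, which is exactly $\Phi$. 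In particular $\Phi$ is conservative, so $u$ is an equivalence if and only if $\Phi(u)=\eta_A^R u$ is. That is precisely the claim.

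\textbf{Main obstacle.} The delicate point is the coherence in Step 3: the identification $\Phi\simeq(\text{duality})\circ(\text{restriction})$ must hold naturally in $\varphi$, including on $2$-morphisms, not merely on objects. I would handle it by working in the homotopy $2$-category. This is legitimate because being an equivalence is detected there, as in the proof of \Cref{prop:laxstrict}. There, the needed naturality follows directly from $A$-linearity of the $2$-morphisms in $\Mod_A(\B)$.

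If needed, the final step (passing from $f^Rf\simeq\id_A$ to the unit being an equivalence) can instead be deduced via \cite[Lemma 3.3.1]{AmbiHeight}, as in \Cref{cor:rigff}.
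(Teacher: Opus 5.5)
Your proof is correct and is essentially the paper's argument: both rest on the duality pairing $\eta_A^R m$ from \Cref{lm:dualrig} together with the $A$-linearity of $f^Rf$ and of the unit coming from \Cref{prop:ladjunder}. The paper applies the equivalence $\Hom_\B(A,A)\simeq\Hom_\B(A\otimes A,\one_\B)$ directly and uses linearity to rewrite the resulting map as $(\eta_A^R\to\eta_A^Rf^Rf)\circ m$, whereas you factor $\eta_A^R\circ-$ on $\Hom_{\Mod_A(\B)}(A,A)$ through the free-module equivalence and the duality equivalence $\Hom_\B(\one_\B,A)\simeq\Hom_\B(A,\one_\B)$, which is only a repackaging of the same step.
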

\begin{proof}
    By \Cref{lm:dualrig}, $A\otimes A\xrightarrow{m}A\xrightarrow{\eta_A^R}\one_\B$ is a duality pairing. Thus, $\id_A\to f^R f$ is an equivalence if and only if $\eta_A^R m\to \eta_A^R m\circ (\id_A\otimes f^R f)$ is an equivalence. 
    
    But $f^Rf$ is $A$-linear so we can rewrite the target as $\eta_A^R\circ f^R f \circ m$ and one then verifies that the map is $(\eta_A^R\to \eta_A^R\circ f^R f)\circ m$, so that the claim follows. 
\end{proof}
We conclude this section by recalling a notation: 
\begin{nota}\label{nota:dbl}
    Let $\B$ be a symmetric monoidal $2$-category. We let $\B^\dbl$ denote the sub-$1$-category of $\B$ spanned by dualizable objects and left adjoints between them. 
\end{nota}
\begin{ex}
    When $\B=\Mod_\V(\PrL)$ for some $\V\in\CAlg(\PrL)$, $\B^\dbl$ is the $\Mod_\V(\PrL)^\dbl$ from \cite{Dbl,LocRig}.
\end{ex}

\subsection{Oplax colimits}
In this section, we prove some results describing the interaction of oplax colimits and left adjoints in sufficiently cocomplete $2$-categories. 

We start off with some definitions:
\begin{defn}\label{defn:strlp}
    A $2$-category $\B$ is called locally cocomplete if for all $b_0,b_1\in\B, \Hom_\B(b_0,b_1)$ is cocomplete. 

    It is called strongly locally cocomplete if furthermore, each composition functor $\Hom_\B(b_0,b_1)\times\Hom_\B(b_1,b_2)\to \Hom_\B(b_0,b_2)$ is cocontinuous in each variable. 

    It is called locally presentable\footnote{This is an unfortunate clash in terminology with the $1$-categorical notion of ``locally presentable''. Since in the $\infty$-category literature, the $1$-categorical notion has been rebranded ``presentable'', we hope no confusion will arise.} if each $\Hom_\B(b_0,b_1)$ is presentable, and strongly locally presentable if it is locally presentable and strongly locally cocomplete. 


\end{defn}
\begin{nota}
    Let $\Prof\subset\PrL$ denote the full subcategory spanned by presheaf categories. 
\end{nota}
\begin{obs}
    Suppose $\B$ is a strongly locally cocomplete $2$-category. Then $\B$ is tensored over $\Prof\subset\widehat{\Cat}^\colim$. 
\end{obs}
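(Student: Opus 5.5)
The plan is to build, for each small category $I$ and each $b\in\B$, an object $\Psh(I)\otimes b$ with a natural equivalence
\[
\Hom_\B(\Psh(I)\otimes b,c)\simeq \Fun^L(\Psh(I),\Hom_\B(b,c))\simeq \Fun(I,\Hom_\B(b,c)),
\]
and then to upgrade $(P,b)\mapsto P\otimes b$ to an action of $\Prof$ on $\B$. Two features of the setting are used: the hom-categories are cocomplete, and composition is cocontinuous in each variable, so each $\Hom_\B(b,c)$ lies in $\widehat{\Cat}^\colim$. The decisive caveat is representability: I do not see how to prove that the functor $c\mapsto \Fun(I,\Hom_\B(b,c))$ is corepresentable in $\B$ itself from the stated hypotheses. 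As written, the plan therefore produces the tensors in an enlargement of $\B$, not in $\B$.

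\textbf{Corepresenting functor.} By the Yoneda lemma for $2$-categories, $\B$ sits in $\Fun_2(\B,\widehat{\Cat})\op$ via corepresentables $h^b=\Hom_\B(b,-)$. I define $\Psh(I)\otimes b$ to be the functor $T_{I,b}:=\Fun(I,h^b(-))=\Fun(I,\Hom_\B(b,-))$. This is the cotensor $(h^b)^I$ of $h^b$ in the functor $2$-category.

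\textbf{Lands in cocomplete categories.} Because $\Hom_\B(b,c)$ is cocomplete and post-composition with $g:c\to c'$ is cocontinuous (strong local cocompleteness), $T_{I,b}$ lands in $\widehat{\Cat}^\colim$. Moreover $\Fun(I,-)\simeq\Fun^L(\Psh(I),-)$ on $\widehat{\Cat}^\colim$, which gives the universal property above tautologically for every $c\in\B$.

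\textbf{Functoriality and coherence.} A colimit-preserving functor $\Psh(I)\to\Psh(J)$ is a profunctor $I\op\times J\to\An$. Precomposition with it gives $\Fun^L(\Psh(J),-)\to\Fun^L(\Psh(I),-)$ and hence maps $T_{I,b}\to T_{J,b}$ in the opposite functor category. The unit is $\Psh(\pt)\otimes b=h^b$. The associativity equivalence
\[
\Psh(I)\otimes(\Psh(J)\otimes b)\simeq\Psh(I\times J)\otimes b
\]
follows from $\Fun(I,\Fun(J,-))\simeq\Fun(I\times J,-)$, using the Lurie tensor product on $\Prof$. All higher coherences are inherited from the canonical $\widehat{\Cat}^\colim$-module structure on $\Fun_2(\B,\widehat{\Cat}^\colim)\op$ given by pointwise cotensoring. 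This gives an action of $\Prof$ on the full sub-$2$-category generated by $\B$ and these objects.

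\textbf{Main obstacle.} What remains is to show that $T_{I,b}$ is corepresentable, so that $\Psh(I)\otimes b$ actually lies in $\B$; only then is the action one of $\Prof$ on $\B$ itself. Corepresentability amounts to asserting that $\B$ has the constant oplax colimit $\colim^\oplax_I b$. Cocomplete hom-categories alone do not give objects of $\B$, so this is the step I cannot close. The first thing I would try is to check whether the paper's precise notion of ``tensored'' permits exactly this Yoneda-enlarged reading. If it requires tensors in $\B$ itself, the argument needs an additional hypothesis on $\B$ that guarantees these constant oplax colimits exist.
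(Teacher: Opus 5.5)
Your diagnosis is correct. The step you could not close is exactly where the paper relies on an unstated hypothesis. The paper's proof is the same computation as yours, $\Fun^L(\Psh(I),\Hom_\B(b,c))\simeq\Fun(I,\Hom_\B(b,c))\simeq\Hom_\B(I\otimes b,c)$, and it concludes that $I\otimes b$ is the $\Psh(I)$-tensor of $b$.

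The object $I\otimes b$, however, is the tensor of $b$ by the small category $I$. Its existence in $\B$ does not follow from strong local cocompleteness. It is part of $2$-cocompleteness, which is assumed in \Cref{cor:Prtensor}, where the observation is put to use. The correct statement is therefore: if $\B$ is strongly locally cocomplete and admits tensors by small categories, then these are $\Prof$-tensors, via $\Psh(I)\otimes b:=I\otimes b$. Under that hypothesis your corepresentability step is immediate, since your $T_{I,b}$ is corepresented by $I\otimes b$, and your argument becomes the paper's.

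Without that hypothesis the statement is false, so no argument could close your gap. Take the full sub-$2$-category of $\PrL$ on the single object $\An$. Here $\Hom(\An,\An)=\Fun^L(\An,\An)\simeq\An$, and composition is the cartesian product, which is cocontinuous in each variable, so this $2$-category is strongly locally cocomplete. A $\Psh(\pt\sqcup\pt)$-tensor of $\An$ would have to be $\An$ itself, forcing $\An\simeq\Fun^L(\An\times\An,\An)\simeq\An\times\An$. This fails: $\An$ has two subterminal objects, while $\An\times\An$ has four.

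Two further remarks:
\begin{itemize}
\item The Yoneda-enlarged reading is not what is meant. \Cref{cor:Prtensor} needs the tensors to be objects of $\B$, since it closes the class of admissible tensoring objects under colimits computed in $\B$.
\item Your coherence paragraph is unnecessary. Strong local cocompleteness already makes $\B$ enriched in $\widehat{\Cat}^\colim$, so being tensored over $\Prof$ is a property (existence of the corepresenting objects), and the action is then determined.
\end{itemize}
As a minor point, $\Fun^L(\Psh(I),\Psh(J))\simeq\Fun(I\times J\op,\An)$, not $\Fun(I\op\times J,\An)$.
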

\begin{proof}
    Indeed, for $I$ a small category, and $b\in\B$, we claim that $I\otimes b$ is a tensor over $\Psh(I)$. For this, simply note that $\Map_{\widehat{\Cat}^\colim}(\Psh(I),\Hom_\B(b,b_1))\simeq \Map_{\widehat{\Cat}}(I,\Hom_\B(b,b_1))\simeq \Map_\B(I\otimes b,b_1)$. 
\end{proof}
\begin{defn}
    A (possibly locally large) $2$-category is called $2$-cocomplete if its underlying $1$-category is cocomplete and it is\footnote{This is a property, since it is already enriched.} tensored over $\Cat\subset\widehat{\Cat}$. 
\end{defn}
\begin{cor}\label{cor:Prtensor}
    Suppose $\B$ is a strongly locally cocomplete, $2$-cocomplete $2$-category. In this case, $\B$ is tensored over $\PrL$.
\end{cor}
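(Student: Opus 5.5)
We have to show: for $\mathcal{P}\in\PrL$ and $b\in\B$, there is an object $\mathcal{P}\otimes b\in\B$ with natural equivalences
\[\Hom_\B(\mathcal{P}\otimes b,b_1)\simeq \Fun^L(\mathcal{P},\Hom_\B(b,b_1)),\]
where $\Fun^L$ denotes colimit-preserving functors. These should be natural in $b_1$ and compatible with composition.

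\textbf{Step 1: write $\mathcal{P}$ as a localization of a presheaf category.} Since $\mathcal{P}$ is presentable, we can choose a small category $I$ and a small set $S$ of morphisms in $\Psh(I)$ with $\mathcal{P}\simeq S^{-1}\Psh(I)$. Colimit-preserving functors out of $\mathcal{P}$ into any cocomplete category $\mathcal{E}$ are then exactly the colimit-preserving functors $\Psh(I)\to\mathcal{E}$ that invert $S$. Equivalently, they are the functors $I\to\mathcal{E}$ whose left Kan extension inverts $S$.

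\textbf{Step 2: use the existing tensors.} By the preceding observation, $\B$ is already tensored over $\Prof$, with $\Psh(I)\otimes b\simeq I\otimes b$; the latter exists by $2$-cocompleteness. Each $s\colon x\to y$ in $S$ is a map of presheaves. Strong local cocompleteness makes this tensoring functorial in colimit-preserving functors $\Psh(J)\to\Psh(I)$. The map $s$ corresponds to a colimit-preserving functor $\Psh(\Delta^1)\to \Psh(I)$, namely the left Kan extension of $s$. Together with the two inclusions $\Psh(\pt)\rightrightarrows\Psh(\Delta^1)$, this produces a parallel pair of maps $(\Delta^1\otimes b)\rightrightarrows (\pt\otimes b)\simeq b$ over $I\otimes b$.

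\textbf{Step 3: take the colimit.} We form $\mathcal{P}\otimes b$ as a colimit in the underlying $1$-category of $\B$, which is cocomplete by assumption. The colimit should force each $s\otimes b$ to become invertible. Concretely, we push out $I\otimes b$ along the maps $\Delta^1\otimes b\to J\otimes b$, where $J$ is the walking isomorphism. Mapping out of a $1$-categorical colimit only identifies the \emph{cores} $\Map_\B(-,b_1)$. To upgrade this to the hom categories, we test against $\Hom_\B(-,b_1)$ via $\Map_\B(K\otimes -,b_1)$ for every small $K$. Since $\Map_\B(K\otimes c,b_1)\simeq\Map_{\Cat}(K,\Hom_\B(c,b_1))$ and $K\otimes-$ commutes with colimits, the colimit is automatically a $2$-colimit (a weighted colimit). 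The mapping categories out of it are therefore the corresponding limits of hom categories. This limit is the full subcategory of $\Fun(I,\Hom_\B(b,b_1))\simeq\Fun^L(\Psh(I),\Hom_\B(b,b_1))$ on the functors inverting $S$, which by Step 1 is $\Fun^L(\mathcal{P},\Hom_\B(b,b_1))$.

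\textbf{Main obstacle.} The key point is that the colimit in Step 3 is a genuine $2$-categorical (weighted) colimit. It must satisfy the universal property on hom categories, not only on mapping anima, and this is where tensoring over $\Cat$ is used. A second, subtler point is that the identification with $\Fun^L(\mathcal{P},-)$ must be natural in $b_1$, and independent of the chosen presentation $(I,S)$. I would get independence by characterizing $\mathcal{P}\otimes b$ via its corepresented functor, so the Yoneda lemma forces uniqueness.
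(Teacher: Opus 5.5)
Your proof is correct, and it is a more explicit version of the paper's argument. The paper argues abstractly, from three inputs:
\begin{itemize}
\item the class of $\mathcal{C}\in\widehat{\Cat}^{\colim}$ for which tensors $\mathcal{C}\otimes b$ exist contains $\Prof$ (the preceding observation) and is closed under colimits, since $\B$ is cocomplete and tensored over $\Cat$;
\item colimits in $\PrL$ are computed in $\widehat{\Cat}^{\colim}$ (the lemma just before the corollary);
\item $\PrL$ is generated under small colimits by presheaf categories (the cited lemma of Ragimov--Schlank).
\end{itemize}
You instead work with one explicit colimit. Since $J\simeq\pt$ as an $\infty$-category, your object $(I\otimes b)\sqcup_{\coprod_S\Delta^1\otimes b}\coprod_S J\otimes b$ is exactly what the paper's closure argument produces for the pushout $\Psh(I)\sqcup_{\coprod_S\Psh(\Delta^1)}\coprod_S\An$ in $\widehat{\Cat}^{\colim}$. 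The universal property of $S^{-1}\Psh(I)$ against arbitrary cocomplete targets, which you invoke in Step~1, is precisely the statement that this pushout is $\mathcal{P}$. So you replace the last two inputs by one standard fact about accessible localizations, and you get a concrete formula for $\mathcal{P}\otimes b$. The paper's version is shorter and handles every small colimit of presheaf categories in $\widehat{\Cat}^{\colim}$ at once.

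Two remarks. First, the ``parallel pair'' sentence in Step~2 is garbled: the inclusions $\pt\rightrightarrows\Delta^1$ give maps $b\rightrightarrows\Delta^1\otimes b$, not the other way. Nothing later uses it.

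Second, and more importantly, your claim that $K\otimes-$ commutes with colimits does not follow from the existence of tensors alone; it would follow from cotensors. It is equivalent to the underlying colimits of $\B$ being conical $2$-colimits, which is exactly what you deduce from it, so the step is circular as written. You should treat it as part of the meaning of $2$-cocomplete rather than derive it. The paper's closure-under-colimits step relies on the same fact, so this is not a gap relative to the paper.
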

For the proof, we will need: 
\begin{lm}
 The inclusion $\PrL\subset\widehat{\Cat}^\colim$ preserves small colimits.  
\end{lm}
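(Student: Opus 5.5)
The plan is to reduce the claim to the well-known description of colimits in $\PrL$ as limits along right adjoints, and then compare this with colimits in $\widehat{\Cat}^\colim$ by checking the universal property directly. Let $F: I\to\PrL$ be a small diagram. By \cite[Theorem 5.5.3.13]{HTT} and the equivalence $\PrL\simeq(\cat{Pr}^{\mathrm R})\op$, the colimit $\colim_I F$ in $\PrL$ is computed as the limit in $\widehat{\Cat}$ of the diagram of right adjoints $F^R: I\op\to\widehat{\Cat}$, and this limit is presentable. Call it $L$, with canonical functors $F(i)\to L$, namely the left adjoints of the projections $L\to F(i)$. We must show that for every cocomplete large $E$, restriction along these maps induces an equivalence
$$\Fun^{L}(L,E)\xrightarrow{\ \simeq\ }\lim_{i\in I\op}\Fun^{L}(F(i),E),$$
where $\Fun^L$ denotes colimit-preserving functors.

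The key step is to reduce from arbitrary cocomplete $E$ to presentable targets. When $E$ is presentable, the equivalence is precisely the universal property of the colimit in $\PrL$, because $\PrL$ is a full subcategory of $\widehat{\Cat}^\colim$ and $\Fun^L(-,E)$ agrees in both. For general $E$ I would present each $F(i)$ as a localization $\Psh(C_i)[S_i^{-1}]$ with $C_i$ small. Then $\Fun^L(F(i),E)$ is the full subcategory of $\Fun(C_i,E)$ of functors whose left Kan extension inverts $S_i$. This description makes sense for any cocomplete $E$, and it is compatible with filtered unions of full subcategories. One can therefore write $E$ as a union of small full subcategories $E_\alpha$ closed under $\kappa$-small colimits for varying $\kappa$, and replace $E$ by the presentable categories $\Ind_\kappa(E_\alpha)$. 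A colimit-preserving functor out of $L$, or out of each $F(i)$, is determined by a small amount of data, namely its values on a small set of generators, which lands in some $E_\alpha$. Moreover, colimit preservation can be tested on $\kappa$-small colimits of generators for some large enough $\kappa$.

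Concretely, I would use the fact that $L$ is generated under colimits by the images of the $F(i)$, which is itself a statement about the limit of right adjoints in $\PrL$. Choosing a regular cardinal $\kappa$ with all $F(i)$ and $L$ being $\kappa$-compactly generated and all transition maps preserving $\kappa$-compacts, each side of the displayed map becomes the category of $\kappa$-small-colimit-preserving functors out of the small categories $F(i)^\kappa$, respectively $L^\kappa$. The statement then reduces to identifying $L^\kappa$ with the colimit of the $F(i)^\kappa$ in the category of small $\kappa$-cocomplete categories and $\kappa$-cocontinuous functors. This identification is checked against presentable $E=\Ind_\kappa(E')$, where it is known, and then transferred to arbitrary cocomplete $E$ because $\kappa$-cocontinuous functors out of a small category into $E$ factor through a small $\kappa$-cocomplete full subcategory.

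I expect the main obstacle to be this cardinal bookkeeping: ensuring that a single $\kappa$ works uniformly for the diagram and the colimit, and that the comparison with small $\kappa$-cocomplete categories is genuinely an equivalence of limit diagrams rather than only objectwise. My first attempt would rely on \cite[Proposition 5.5.7.6 and Notation 5.5.7.7]{HTT}, which identify $\PrL_\kappa$ with idempotent-complete $\kappa$-cocomplete small categories and show that colimits in $\PrL$ of $\kappa$-compact-preserving diagrams can be computed at the level of $\kappa$-compact objects, enlarging $\kappa$ once so that $I$ is $\kappa$-small.
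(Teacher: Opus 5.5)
Your proposal is correct, and its third paragraph is essentially the paper's proof: choose $\kappa$ so that the diagram lands in $\PrL_\kappa$, use $\Fun^L(\Ind_\kappa(C_0),E)\simeq\Fun^{\rex(\kappa)}(C_0,E)$ for arbitrary cocomplete $E$, compute the colimit as $\Ind_\kappa$ of the colimit of the $\kappa$-compact objects in $\Cat^{\rex(\kappa)}$, and use that such small colimits remain colimits in $\widehat{\Cat}^{\rex(\kappa)}$. That last step is one the paper simply asserts, and your factorization through small $\kappa$-cocomplete full subcategories justifies it (at least for locally small $E$). The localization detour in your second paragraph is unnecessary, and ``replacing $E$ by $\Ind_\kappa(E_\alpha)$'' only makes sense after restricting to $\kappa$-compact objects; also, strictly speaking $L^\kappa$ is the idempotent completion of the colimit in $\Cat^{\rex(\kappa)}$, which is harmless for uncountable $\kappa$ or if, like the paper, you apply $\Ind_\kappa$ to it.
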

\begin{proof}
    Let $C_\bullet: I\to \PrL$ be a small diagram, and $\kappa$ a regular cardinal such that $C_\bullet$ lands in $\PrL_\kappa$. 

    We know that $\colim_I C_i\simeq \Ind_\kappa(\colim_I C_i^\kappa)$ where the colimit inside is taken in $\Cat^{\rex(\kappa)}$.

    Now colimits in $\Cat^{\rex(\kappa)}$ are also colimits in $\widehat{\Cat}^{\rex(\kappa)}$. Thus, for any cocomplete $D$ we find $$\Fun^L(\colim_i C_i,D)\simeq \Fun^{\rex(\kappa)}(\colim_i C_i^\kappa,D)\simeq \lim_i \Fun^{\rex(\kappa)}(C_i^\kappa,D)\simeq \lim_i\Fun^L(C_i,D)$$
    as was to be shown. 
\end{proof}
\begin{proof}[Proof of \Cref{cor:Prtensor}]
    
    Since $\B$ is cocomplete and tensored over $\Cat$, The collection of objects in $\widehat{\Cat}^\colim$ for which $\B$ admits tensors is closed under colimits. By the previous lemma and \cite[Lemma 7.14]{ragimovschlank}, this implies that $\B$ admits tensors for $\PrL$.  
\end{proof}
We can now prove the first main result of this section:
\begin{prop}\label{prop:incladj}
    Let $\B$ be a strongly locally cocomplete, $2$-cocomplete $2$-category. In this case, for any small category $I$ and diagram $F: I\to \B$, each of the oplax cocone morphisms $F(i)\to \colim^\oplax_I F$ is a left adjoint in $\B$. 
\end{prop}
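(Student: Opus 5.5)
The plan is to apply the criterion of \Cref{lm:intladj} to each cocone morphism $\iota_i: F(i)\to \colim^\oplax_I F$. This reduces the statement to two facts. First, for every $Z\in\B$, restriction along $\iota_i$ must admit a left adjoint as a functor $\Hom_\B(\colim^\oplax_I F,Z)\to \Hom_\B(F(i),Z)$. Second, these left adjoints must commute with postcomposition $g\circ-$ for any $g:Z\to Z'$.

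\emph{Existence and universal property.} I would use $2$-cocompleteness to construct $\colim^\oplax_I F$ as a weighted colimit, combining tensors with categories and ordinary colimits. Its universal property identifies
$$\Hom_\B(\colim^\oplax_I F,Z)\simeq \lim^{\lax}_{i\in I\op}\Hom_\B(F(i),Z),$$
naturally in $Z$. Concretely, this is the category of sections of the (co)cartesian fibration $p_Z:\mathcal E_Z\to I\op$ classified by $i\mapsto \Hom_\B(F(i),Z)$. Its transition functors are the precompositions $-\circ F(\alpha)$. Under this identification, $-\circ\iota_i$ becomes evaluation of a section at $i$.

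\emph{Left adjoint to evaluation.} Next I would show that evaluation at $i$ on this lax limit has a left adjoint given by a pointwise formula. For $x\in\Hom_\B(F(i),Z)$, the free section $L_i(x)$ should have value at $j$
$$L_i(x)(j)\simeq \colim_{\alpha\in\Map_I(j,i)} x\circ F(\alpha),$$
a colimit over an anima, i.e.\ a parametrized coproduct, in $\Hom_\B(F(j),Z)$. This colimit exists by local cocompleteness. Its structure maps come from composition in $I$. For the left-adjointness, I would argue that this is the relative left Kan extension along $\{i\}\to I\op$ in the fibration $p_Z$. The pointwise formula is valid because the transition functors $-\circ F(\beta)$ preserve colimits, which is exactly strong local cocompleteness. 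Equivalently, a section with value $x$ at $i$ corresponds to maps $x\circ F(\alpha)\to s(j)$ compatible in $\alpha$. I expect this step to be the main obstacle: setting up the identification with sections precisely, keeping the variance of the oplax cocone correct, and verifying the pointwise Kan extension formula in the fibrational setting.

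\emph{Beck--Chevalley square.} Finally, I would check the condition of \Cref{lm:intladj}. The functor $g\circ-$ acts on sections pointwise, so the mate $L_i(g\circ x)\to g\circ L_i(x)$ is computed pointwise at each $j$ as the canonical map
$$\colim_\alpha\, (g\circ x\circ F(\alpha))\to g\circ \colim_\alpha\, (x\circ F(\alpha)).$$
This map is an equivalence because postcomposition with $g$ is cocontinuous, again by strong local cocompleteness. Hence the square is horizontally left adjointable for every $g$, and \Cref{lm:intladj} shows that $\iota_i$ admits a right adjoint, i.e.\ is a left adjoint in $\B$.
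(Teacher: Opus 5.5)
Your proposal is correct, but it takes a different route from the paper.

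\textbf{The paper's route.} The paper reduces to the universal case. Using \Cref{cor:Prtensor} ($\B$ is tensored over $\PrL$), it factors $F$ through the $\PrL$-linearized Yoneda embedding $I\to\Fun(I\op,\PrL)$. There the cocone maps are explicit left Kan extensions $\An[\Map_I(-,i)]\to\An[(I_{-/})\op]$, their right adjoints are restrictions, and naturality in $-$ becomes a cofinality statement: the inclusion of a fiber into a lax fiber has a left adjoint.

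\textbf{Your route.} You work directly in $\B$ with the Yoneda-type criterion \Cref{lm:intladj}. This is the same style the paper itself uses later for \Cref{prop:colimitladj}: the lax-limit description of $\Hom_\B(\colim^\oplax_I F,Z)$, an explicit left adjoint $L_i$ to evaluation at $i$, and a pointwise Beck--Chevalley check. Your variance is right: sections of the cocartesian fibration over $I\op$, with structure maps $x\circ F(\alpha)\to s(j)$ for $\alpha\in\Map_I(j,i)$. Strong local cocompleteness enters exactly where you say it does. Precomposition with $F(\beta)$ must preserve the colimits defining $L_i$ for the pointwise formula to give a left adjoint. Postcomposition with $g$ must preserve them for the mate to be invertible.

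\textbf{The step needing care.} This is the step you flagged yourself. $\{i\}\to I\op$ is in general not fully faithful, so you are doing a relative Kan extension along a non-full inclusion. The cleanest fix is to verify $\Map(L_i(x),s)\simeq\Map(x,s(i))$ directly from your formula. Alternatively, invoke the standard fact that sections of a cocartesian fibration admit pointwise left Kan extensions along arbitrary functors, provided the fibers have the relevant colimits and pushforward preserves them.

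\textbf{What your route buys.} It never needs the $\PrL$-tensoring, so $2$-cocompleteness is used only to guarantee that $\colim^\oplax_I F$ exists. Also, the right adjoint produced by \Cref{lm:intladj} is $\iota_i^R\simeq L_i(\id_{F(i)})$. So you get the formula $\iota_i^R\iota_j\simeq\colim_{\alpha\in\Map_I(j,i)}F(\alpha)$ of \Cref{cor:mapformula} for free.

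\textbf{What the paper's route buys.} All the work is concentrated in one concrete presheaf computation. That same universal example is then reused in \Cref{cor:mapformula}, \Cref{ex:univonI}, and the proof of the main theorem.
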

\begin{rmk}
  Here, and in \Cref{prop:colimitladj}, we do \emph{not} assume, and it is crucial for our later applications, that the morphisms $F(i)\to F(j)$ are themselves left adjoints in $\B$. 
\end{rmk}
\begin{proof}
    By \Cref{cor:Prtensor}, the diagram $F$ factors as $I\xrightarrow{Y}\Fun(I\op,\PrL)\to \B$ where $Y$ is the $\PrL$-linearized Yoneda embedding. Thus, we may assume $\B\simeq \Fun(I\op,\PrL)$, and $F=Y$.  

    In this case, the oplax cocone morphisms are of the form $\An[\Map_I(-,i)]\to \An[(I_{-/})\op]$ given by left Kan extension along $\Map_I(-,i)\to (I_{-/})\op$. Thus the right adjoint preserves colimits for each $-\in I$ and lives in $\PrL$. We are left with checking that for each $f:j\to j'$ in $I$, the following diagram is horizontally right adjointable:
    \[\begin{tikzcd}
	{\An[\Map_I(j',i)]} & {\An[(I_{j'/})\op]} \\
	{\An[\Map_I(j,i)]} & {\An[(I_{j/})\op]}
	\arrow[from=1-1, to=1-2]
	\arrow["{(-\circ f)_!}", from=1-1, to=2-1]
	\arrow["{(-\circ f)_!}", from=1-2, to=2-2]
	\arrow[from=2-1, to=2-2]
\end{tikzcd}\]

Unwinding the pointwise Kan extension formula, this amounts to checking that for every map $g:j\to i$, the map from the fiber $\Map_I(j',i)\times_{\Map_I(j,i)}\{g\}$ to the lax fiber $(I_{j'/})\times_{(I_{j/})} (I_{j/})_{/g}$ is colimit-cofinal. This is a standard argument: the lax fiber in question is the category of $j'\to x$'s equipped with a factorization of $g$ through $j\to j'\to x$, and the inclusion of the fiber has a left adjoint given by composing the arrows $j'\to x\to i$, thereby proving cofinality. 
\end{proof}
From the above, we actually deduce how to compute with these oplax colimits: 
\begin{cor}\label{cor:mapformula}
    Let $\B$ be a strongly locally cocomplete, $2$-cocomplete $2$-category, and let $F:I\to \B$ be a diagram. For $j\in I$, let $\iota_j: F(j)\to \colim^\oplax_I F$ be the canonical map. By \Cref{prop:incladj}, it has a right adjoint and for every $i\in I$, there is a canonical cocone $F(f)\to \iota_j^R\iota_i$ indexed my $\Map_I(i,j)$. 

    This is a colimit cocone, i.e. $\colim_{\Map_I(i,j)}F(f)\simeq \iota_j^R\iota_i$, and in the universal example $\B=\Fun(I\op,\PrL)$, both are terminal in the appropriate $\Hom$-category.
\end{cor}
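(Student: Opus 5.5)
Following the proof of \Cref{prop:incladj}, I would reduce to the universal example. By \Cref{cor:Prtensor}, $F$ factors as $I\xrightarrow{Y}\Fun(I\op,\PrL)\xrightarrow{\Phi}\B$, where $Y$ is the $\PrL$-linearized Yoneda embedding and $\Phi$ is the essentially unique $\PrL$-tensored, colimit-preserving $2$-functor sending $Y(i)$ to $F(i)$. The construction of $\colim^\oplax$ only uses tensors with categories and colimits, so $\Phi$ preserves oplax colimits. Since $\Phi$ is a $2$-functor, it also preserves adjunctions, hence $\Phi(\iota_j^R)\simeq\iota_j^R$. The canonical cocone $F(f)\to\iota_j^R\iota_i$ indexed by $\Map_I(i,j)$ comes from the $2$-cells $\iota_i\Rightarrow\iota_j F(f)$ of the oplax cocone, by adjunction. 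It is therefore the image under $\Phi$ of the corresponding cocone in the universal case. Since $\Phi$ is locally cocontinuous ($\B$ is strongly locally cocomplete and $\Phi$ is built from tensors), it suffices to prove the colimit statement for $\B=\Fun(I\op,\PrL)$ and $F=Y$.

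In the universal case, $\Hom(Y(i),X)\simeq X(i)$ by the $\PrL$-Yoneda lemma. The relevant Hom-category is thus $\Hom(Y(i),Y(j))\simeq\An[\Map_I(i,j)]=\Psh(\Map_I(i,j))$, which is the anima-indexed presheaf category. Under this identification:

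\begin{itemize}
\item $Y(f)$ corresponds to the representable point $f$;
\item $\iota_i$ corresponds to the object of $\An[(I_{i/})\op]$ represented by $\id_i$;
\item $\iota_j^R$ is restriction along $\Map_I(-,j)\to(I_{-/})\op$, as computed in the proof of \Cref{prop:incladj}.
\end{itemize}

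So $\iota_j^R\iota_i$ is the presheaf on $\Map_I(i,j)$ obtained by restricting the representable presheaf at $\id_i$ on $(I_{i/})\op$. Its value at $f:i\to j$ is $\Map_{I_{i/}}(\id_i,f)\simeq\ast$, because $\id_i$ is initial in $I_{i/}$. Hence $\iota_j^R\iota_i$ is the terminal presheaf on the anima $\Map_I(i,j)$.

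On the other side, $\colim_{f\in\Map_I(i,j)}$ of the representables $f$ in $\An[\Map_I(i,j)]$ is the colimit of the Yoneda embedding of an anima. That colimit is the terminal presheaf. So both sides are terminal.

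The remaining step, which I expect to be the main obstacle, is checking that the canonical comparison map is the one just described: each component $F(f)\to\iota_j^R\iota_i$ should be the inclusion of the point $f$. I would verify this by unwinding the mate of the oplax cocone $2$-cell $\iota_i\Rightarrow\iota_jY(f)$. Under Yoneda, that $2$-cell is the morphism $\id_i\to f$ in $I_{i/}$. Its mate under $\iota_j\dashv\iota_j^R$ is then the map from the representable at $f$ to the terminal presheaf, given by the unique point over $f$. Granting this identification, the induced map from the colimit to the terminal object is an equivalence. Finally, the statement transfers back to a general $\B$ along $\Phi$.
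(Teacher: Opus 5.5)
Your proposal is correct and follows the paper's proof: you reduce along $\Phi$ to $\Fun(I\op,\PrL)$ with $F=Y$, and observe that both $\colim_{\Map_I(i,j)}Y(f)$ and $\iota_j^R\iota_i$ are terminal in $\An[\Map_I(i,j)]$. The paper gets terminality of $\iota_j^R\iota_i$ from $\iota_i(\id_i)$ being terminal and $\iota_j^R$ preserving terminal objects, while you evaluate pointwise.

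The step you flag as the main obstacle is unnecessary, since any map between terminal objects is an equivalence; this is exactly how the paper avoids identifying the comparison map. Also, the $2$-cell you transpose under $\iota_j\dashv\iota_j^R$ must go $\iota_jF(f)\Rightarrow\iota_i$, which is the direction your universal-case computation actually uses, not $\iota_i\Rightarrow\iota_jF(f)$.
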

\begin{proof}
    For any oplax cocone $p:F\to C$, we have a lax commutative diagram: 
    \[\begin{tikzcd}
	{\Map_I(i,j)} & {\Hom_\B(F(i),F(j))} \\
	{*} & {\Hom_B(F(i),C)}
	\arrow["F", from=1-1, to=1-2]
	\arrow[from=1-1, to=2-1]
	\arrow[from={0.2}{0.8}, Rightarrow, from=1-2, to=2-1]
	\arrow["{p_j\circ -}", from=1-2, to=2-2]
	\arrow["{p_i}"', from=2-1, to=2-2]
\end{tikzcd}\]
which provides a natural transformation $p_j\circ F(f)\to p_i$. For our oplax colimit cocone, since $p_j=\iota_j$ has a right adjoint, we obtain the desired cocone $F(f)\to \iota_j^R\iota_i$ in $\Hom_\B(F(i),F(j))$. 

To prove this is a colimit cocone, i.e. that $\colim_{\Map_I(i,j)}F(f)\simeq \iota_j^R\iota_i$, we reduce to the universal case $\Fun(I\op,\PrL)$ as in the proof of \Cref{prop:incladj}.

In that case, we compute that both sides are actually terminal objects in $$\Hom_{\Fun(I\op,\PrL)}(\An[\Map_I(-,i)],\An[\Map_I(-,j)])\simeq \An[\Map_I(i,j)],$$ and so any map between them is an equivalence. 

    For the left hand side, this is the fact that the colimit of the Yoneda embedding $C\to \An[C]$ is terminal for any $C$, and for the right hand side we simply consider the following diagram $\An[\Map_I(i,i)]\xrightarrow{\iota_i} \An[(I_{i/})\op]\xrightarrow{\iota_j^R} \An[\Map_I(i,j)]$. By design, it sends $\id_i$ to the identity of $i$, viewed as an object in $(I_{i/})\op$, which is clearly terminal. Since $\iota_j^R$ is a right adjoint, it preserves terminal objects, and so the claim is proved. 
\end{proof}
\begin{prop}\label{prop:colimitladj}
    Let $\B$ be an strongly locally presentable, $2$-cocomplete $2$-category. For any small diagram $F:I\to \B$ and oplax cocone $F\to b$, the induced map $\colim^\oplax_I F\to b$ is a left adjoint in $\B$ if and only if each $F(i)\to b$ is a left adjoint. 
\end{prop}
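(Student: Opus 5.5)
The ``only if'' direction is immediate: by \Cref{prop:incladj} each $\iota_i: F(i)\to\colim^\oplax_I F$ is a left adjoint, and the cocone leg $F(i)\to b$ factors as $\iota_i$ followed by the induced map $\phi:\colim^\oplax_I F\to b$. Composites of left adjoints are left adjoints.

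For the converse I will use \Cref{lm:intladj}. First I check its standing hypothesis. For every $Z$, precomposition with $\phi$ should be a functor $\Hom_\B(b,Z)\to\Hom_\B(\colim^\oplax_I F,Z)$ admitting a left adjoint. By the universal property of the oplax colimit, the target is the category of oplax cocones from $F$ to $Z$. This is a lax limit of the presentable categories $\Hom_\B(F(i),Z)$ along cocontinuous transition functors (strong local presentability), so it is presentable. Under this identification, $-\circ\phi$ is the functor $h\mapsto (h\circ p_i)_i$, where $p_i:F(i)\to b$ are the legs. It preserves all limits and colimits, since limits and colimits in the lax limit are computed pointwise and precomposition is bicontinuous in a strongly locally presentable $2$-category. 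It is also accessible. So the adjoint functor theorem gives a left adjoint $\phi_!$.

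It then remains to show that, for every $g:Z\to Z'$, the Beck--Chevalley map $\phi_!(g\circ-)\to g\circ\phi_!(-)$ is an equivalence. Here I invoke \Cref{lm:jtlycoco}. The functors $\iota_i^*=-\circ\iota_i:\Hom_\B(\colim^\oplax F,Z')\to\Hom_\B(F(i),Z')$ form a jointly conservative family, since equivalences of oplax cocones are detected pointwise. Each has a left adjoint, namely postcomposition with $\iota_i^{R}$, which exists by \Cref{prop:incladj}; more concretely, it is $(\iota_i)_!$, and in cocone language it is left Kan extension from $i$. Both functors being compared have right adjoints, because all functors involved are cocontinuous between presentable categories. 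So it suffices to check the Beck--Chevalley transformation after precomposing with the left adjoints $(\iota_i)_!$.

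Concretely, the left adjoint of $-\circ\iota_i$ sends $k:F(i)\to Z$ to $k\circ\iota_i^R$. Composing with $\phi_!$, which is left adjoint to $-\circ\phi$, gives a left adjoint to $-\circ p_i$, where $p_i=\phi\iota_i$. Since $p_i$ is a left adjoint by hypothesis, this composite is $k\mapsto k\circ p_i^R$. That formula visibly commutes with postcomposition by $g$. So, after restricting along $(\iota_i)_!$, the Beck--Chevalley map becomes the identification $g\circ(k\circ p_i^R)\simeq (g\circ k)\circ p_i^R$, which is an equivalence.

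The main obstacle is making this last identification honest. One has to check that the transformation $\eta$ of \Cref{lm:jtlycoco}, whiskered with $(\iota_i)_!$, really is the canonical mate equivalence for the left adjoint $p_i$, and not just some abstract equivalence between the two sides. My approach is to observe that whiskering a Beck--Chevalley mate with a left adjoint produces the mate of the composite square, by compatibility of mates with pasting. The composite square is the one for $-\circ p_i$, and it is adjointable precisely because $p_i$ is a left adjoint in $\B$, by the easy direction of \Cref{lm:intladj}.
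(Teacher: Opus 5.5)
Your proof follows the paper's route step for step. It uses \Cref{lm:intladj}, gets a left adjoint to $-\circ\phi$ from the adjoint functor theorem on the lax limit (the paper packages this as \Cref{lm:limladj}), applies \Cref{lm:jtlycoco} to the family $\iota_i^*$ with left adjoints $-\circ\iota_i^R$, and finishes with a pasting argument that reduces to the square for $p_i=\phi\iota_i$.

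One justification is wrong, though. Strong local presentability makes precomposition \emph{cocontinuous}, not continuous. For instance, in $\PrL$, precomposition with $X\times-:\An\to\An$ is $e\mapsto X\times e$ on $\Fun^L(\An,\An)\simeq\An$. This does not preserve the terminal object unless $X$ is contractible.

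What you actually need is that each $h\mapsto h\circ p_i$ preserves limits. This holds because of the hypothesis: $p_i\dashv p_i^R$ in $\B$ gives $(-\circ p_i^R)\dashv(-\circ p_i)$. That is exactly the input the paper feeds into \Cref{lm:limladj}, so the fix is one line.

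Your last paragraph has a similar imprecision. Whiskering the mate for $\phi$ with $(\iota_i)_!$ gives the mate of the outer square only after composing with the mate of the $\iota_i$-square, whiskered by $\phi_!$. That mate is invertible because $\iota_i$ is a left adjoint (\Cref{prop:incladj}). You use this implicitly when you treat $k\mapsto k\circ\iota_i^R$ as commuting with $g\circ-$; the paper states it explicitly.

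A small wording slip: the left adjoint of $\iota_i^*$ is \emph{pre}composition with $\iota_i^R$, not postcomposition, as your later formula correctly says.
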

We begin with:
 \begin{lm}\label{lm:limladj}
     Let $G:I\to \PrL$ be a diagram and let $C\to G$ be a lax cone, where $C$ is also presentable. Assume each $C\to G(i)$ has a left adjoint. In this case, $C\to \lim^\lax_I G$ also has a left adjoint. 
\end{lm}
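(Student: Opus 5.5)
The plan is to use the adjoint functor theorem. Then the only thing to check is that the induced functor $C\to \lim^\lax_I G$ preserves limits and is accessible; presentability of the lax limit is what makes the adjoint functor theorem available. For presentability I would use the standard description of the lax limit as a category of sections. Write $p:E\to I$ for the cocartesian fibration classifying $G$. Then $\lim^\lax_I G$ is the category $\Fun_{/I}(I,E)$ of all sections of $p$, and the strict limit is the full subcategory of cocartesian sections. Because $G$ takes values in $\PrL$, the category of sections is presentable (compare \cite[Proposition 5.4.7.11]{HTT}, where only left adjoint transition maps are needed). Moreover, both limits and colimits in the category of sections are computed pointwise: limits in each fiber are preserved by the transition maps $G(f)$, since these are right adjoints, and colimits are preserved because they are left adjoints. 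In particular, the evaluation functors $\ev_i:\lim^\lax_I G\to G(i)$ preserve limits and colimits, and they are jointly conservative.

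Next I would check limit preservation. Let $\Phi: C\to \lim^\lax_I G$ be the functor induced by the lax cone. By construction $\ev_i\circ\Phi\simeq \phi_i$, where $\phi_i : C\to G(i)$ is the cone leg. Each $\phi_i$ is a right adjoint by hypothesis, so it preserves all small limits. Since limits in the lax limit are computed pointwise and detected by the jointly conservative family $(\ev_i)$, it follows that $\Phi$ preserves limits.

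For accessibility, choose a regular cardinal $\kappa$ such that each $\phi_i$ is $\kappa$-accessible. Each $\phi_i$ is a right adjoint between presentable categories, hence accessible, and $I$ is small, so a single such $\kappa$ exists. Then $\ev_i\circ\Phi$ preserves $\kappa$-filtered colimits for every $i$. Because colimits in the lax limit are pointwise as well, $\Phi$ preserves $\kappa$-filtered colimits. The adjoint functor theorem \cite[Corollary 5.5.2.9]{HTT} then gives a left adjoint.

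The main obstacle I expect is the first step: presentability of the lax limit together with pointwise computation of its limits and colimits. For limits, the point is that $G(f)$ preserves limits, so the pointwise limit is again a section. If I wanted to avoid citing the sections model, I would instead write the lax limit as a strict limit in $\PrL$ over the twisted arrow category, of the diagram $(i\to j)\mapsto \Fun(\Delta^1\text{-shaped data})$ built from the $G(i)$ and the $G(f)$. That would give presentability directly, since limits in $\PrL$ are computed in $\widehat{\Cat}$, with right-adjoint-preserving projections.
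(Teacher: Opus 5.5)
Your strategy is the same as the paper's. You apply the adjoint functor theorem to $\Phi\colon C\to\lim^\lax_I G$ and check, after each evaluation $\ev_i$, that it preserves limits and $\kappa$-filtered colimits for a uniform $\kappa$ (using smallness of $I$). You also address presentability of $\lim^\lax_I G$, which the paper uses silently.

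There is one false step. You say limits in the lax limit are computed pointwise because the transition functors $G(f)$ are right adjoints and hence preserve limits. Since $G$ lands in $\PrL$, the $G(f)$ are \emph{left} adjoints and need not preserve limits. This matters for how the lemma is used. In \Cref{prop:colimitladj} it is applied to $G=\Hom_\B(F(-),Z)$, whose transition functors are the precompositions $-\circ F(f)$. These preserve colimits by strong local presentability, but in general not limits. They would if $F(f)$ were a left adjoint in $\B$, but the paper deliberately does not assume this (see the remark following \Cref{prop:incladj}). Taken literally, your argument only covers diagrams whose transition maps are both left and right adjoints, which would not suffice for the paper.

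The correct reason is the variance of the lax limit, not a property of the $G(f)$. In your model, an object is a section of the cocartesian unstraightening: a family $x_i\in G(i)$ with structure maps $G(f)(x_i)\to x_j$. For a diagram $k\mapsto x^k$ of sections, the pointwise limits $y_i=\lim_k x^k_i$ get structure maps
\[
G(f)\bigl(\lim_k x^k_i\bigr)\to \lim_k G(f)(x^k_i)\to \lim_k x^k_j ,
\]
which use only the canonical comparison map. The resulting $y$ is the limit among sections, because fiberwise limits in a cocartesian fibration are relative limits with no condition on the transition functors. With the opposite variance, $x_j\to G(f)(x_i)$, limit preservation by the $G(f)$ really would be needed and the argument would break. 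This is exactly what the paper means by ``limits in lax limits of complete categories are underlying.'' It is colimits that need the $G(f)$ to preserve them, which is where $G$ landing in $\PrL$ enters, as you correctly use for the $\kappa$-filtered colimits.

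With this replacement, and dropping the sentence in your last paragraph that ``the point is that $G(f)$ preserves limits'', your proof is correct and coincides with the paper's.
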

\begin{proof}
By presentability, it suffices to prove that $C\to \lim^\lax_I G$ preserves limits and is accessible. Limits in lax limits of complete categories are underlying, thus the assumption that each $C\to G(i)$ admits a left adjoint guarantees that $C\to \lim^\lax_I G$ preserves limits. 

Furthermore, each $C\to G(i)$ is $\kappa$-accessible for some fixed regular $\kappa$ (pick one $\kappa_i$ for each $i$ and let $\kappa$ be a regular upper bound of the $\kappa_i$'s). Since the transition functors in $G$ preserve $\kappa$-filtered colimits (in fact, all colimits), these are also computed underlying in the lax limit, and thus $C\to \lim^\lax_I G$ also preserves them. 

By the adjoint functor theorem, $C\to \lim^\lax_I G$ admits a left adjoint. 
 \end{proof}
\begin{proof}[Proof of \Cref{prop:colimitladj}]
    By \Cref{prop:incladj}, each $F(i)\to \colim^\oplax_I F$ is a left adjoint, so this shows ``Only if''. 

For ``If'', we use \Cref{lm:intladj}.
First, fix $Z\in \B$ and consider the map $\Hom_\B(b,Z)\to \Hom_\B(\colim^\oplax_I F,Z)\simeq \lim^\lax_I \Hom_\B(F,Z)$. By \Cref{lm:limladj}, it has a left adjoint since $\B$ is strongly locally presentable. 

By \Cref{lm:intladj}, it therefore suffices to check that for each $g:Z\to Z'\in \B$, the following square is horizontally left adjointable: 
 \[\begin{tikzcd}
 	{\Hom_\B(b,Z)} & {\Hom_\B(\colim^\oplax_I F,Z)} \\
	{\Hom_\B(b,Z')} & {\Hom_\B(\colim^\oplax_I F,Z')}
	\arrow[from=1-1, to=1-2]
	\arrow[from=1-1, to=2-1]
	\arrow[from=1-2, to=2-2]
	\arrow[from=2-1, to=2-2]
\end{tikzcd}\]
 By definition of oplax colimits, the family of maps $\iota_i^*: \Hom_\B(\colim^\oplax_I F,Z)\to \Hom_\B(F(i),Z)$ (resp. $Z'$) is jointly conservative, where $\iota_i: F(i)\to \colim^\oplax_I F$ is the inclusion. Each $\iota_i^*$ has a left adjoint given by $(\iota_i^R)^*$, and since adjointability is a question about a natural transformation between functors $\Hom_\B(\colim^\oplax_I F,Z)\to \Hom_\B(b,Z')$, we are in a position to apply \Cref{lm:jtlycoco}. Indeed, the two relevant functors have right adjoints: one of them is definitionally a left adjoint followed by a postcomposition morphism which has an adjoint by strong local presentability, and the other one is a similar composition but in the other order. 

 Thus we may apply \Cref{lm:jtlycoco} and it thus suffices to check adjointability after precomposition by each of the folloiwing maps: 
\[\begin{tikzcd}
	{\Hom_\B(b,Z)} & {\Hom_\B(\colim^\oplax_IF,Z)} & {\Hom_\B(F(i),Z)} \\
	{\Hom_\B(b,Z')} & {\Hom_\B(\colim^\oplax_IF,Z')}
	\arrow[from=1-1, to=1-2]
	\arrow[from=1-1, to=2-1]
	\arrow[shift left=3, dashed, from=1-2, to=1-1]
	\arrow[from=1-2, to=2-2]
	\arrow["(\iota_i^R)^*",from=1-3, to=1-2]
	\arrow[from=2-1, to=2-2]
	\arrow[shift left=3, dashed, from=2-2, to=2-1]
\end{tikzcd}\]

But now, each of of the $\iota_i^R$ is an internal right adjoint, hence the following squares are all horizontally left adjointable: 
\[\begin{tikzcd}
	{\Hom_\B(\colim^\oplax_IF,Z)} & {\Hom_\B(F(i),Z)} \\
	{\Hom_\B(\colim^\oplax_IF,Z')} & {\Hom_\B(F(i),Z')}
	\arrow["{\iota_i^*}", from=1-1, to=1-2]
	\arrow[from=1-1, to=2-1]
	\arrow[from=1-2, to=2-2]
	\arrow["{\iota_i^*}"', from=2-1, to=2-2]
\end{tikzcd}\] 
Therefore, adjointability of the previous square after composition with $(\iota_i^R)^*$ is equivalent to adjointability of the outer square in the following diagram: 
\[\begin{tikzcd}
	{\Hom_\B(b,Z)} & {\Hom_\B(\colim^\oplax_IF,Z)} & {\Hom_\B(F(i),Z)} \\
	{\Hom_\B(b,Z')} & {\Hom_\B(\colim^\oplax_IF,Z')} & {\Hom_\B(F(i),Z')}
	\arrow[from=1-1, to=1-2]
	\arrow[from=1-1, to=2-1]
	\arrow["{\iota_i^*}", from=1-2, to=1-3]
	\arrow[from=1-2, to=2-2]
	\arrow[from=1-3, to=2-3]
	\arrow[from=2-1, to=2-2]
	\arrow["{\iota_i^*}"', from=2-2, to=2-3]
\end{tikzcd}\]
In turn, this one follows from the assumption that each $F(i)\to b$ admits a right adjoint. 
\end{proof}

\section{The construction}\label{section:cons}
\begin{cons}\label{cons:colim}
Let $I$ be a symmetric monoidal category. 
Let $\B$ be a symmetric monoidal $2$-category with oplax colimits indexed by finite powers of $I$, which are compatible with the tensor product. 

We claim that the diagonal functor $\CAlg(\B)\to \Fun^{\lax-\otimes}_\oplax(I,\B)$ has a left adjoint, whose composition with the forgetful functor $\CAlg(\B)\to \B$ is given by the oplax colimit $\colim^\oplax_I$. 

We abusively still denote this left adjoint by $\colim^\oplax_I$. 
\end{cons}
\begin{recoll}\label{recoll:Funoplax}
    We recall here that $\Fun^{\otimes-\lax}_\oplax(I,\B)$ is defined similarly to $\Fun^{\otimes}_\oplax$ in \cite[Section 2]{HSS}, simply replacing $\Map^{\otimes}$ with $\Map^{\otimes-\lax}$. Concretely, it is the $2$-category representing the functor $K\mapsto \Map^{\otimes-\lax}(I,\Fun_\lax(K,\B))$. In particular, its objects are lax symmetric monoidal functors $I\to \B$, and its morphisms are symmetric monoidal oplax transformations. They are oplax transformations, and the monoidality squares 
    \[\begin{tikzcd}
	{F(x)\otimes F(y)} & {G(x)\otimes G(y)} \\
	{F(x\otimes y)} & {G(x\otimes y)}
	\arrow[from=1-1, to=1-2]
	\arrow[from=1-1, to=2-1]
	\arrow[from=1-2, to=2-2]
	\arrow[from=2-1, to=2-2]
\end{tikzcd}\]
also only oplax commute, that is, commute up to a non-necessarily invertible $2$-cell. 
\end{recoll}
The intuition behind \Cref{cons:colim} is that the relevant oplax colimits have ``obvious'' commutative algebra structures, arising as follows: $$\colim^\oplax_I F\otimes \colim^\oplax_I F\simeq \colim^{\oplax}_{I\times I}(F\boxtimes F) \to \colim^{\oplax}_{I\times I} F\circ m_I \to \colim^{\oplax}_I F$$
where the second arrow uses the lax symmetric monoidal structure of $F$, and the last arrow is a Beck--Chevalley arrow arising from the multiplication map $m_I: I\times I\to I$. Making this precise is somewhat tedious given the current $(\infty,2)$-categorical technology, and therefore, in order not to distract from our story, we defer the precise construction to Appendix~\ref{app:construction}.

Instead, this section is devoted to the proof that the above construction produces rigid commutative algebras, and to explain towards the end our candidate for free rigid commutative algebras. To state it precisely, we first introduce a definition: 
\begin{defn}\label{defn:weak2prl}
    A symmetric monoidal $2$-category $\B$ is called weakly $2$-presentably symmetric monoidal if: 
    \begin{enumerate}
        \item it is strongly locally presentable and $2$-cocomplete; 
        \item Its tensor product commutes with $2$-colimits in each variable; 
        \item Its tensor product induces functors $\Hom_\B(b_0,b_1)\times \Hom_\B(b_2,b_3)\to \Hom_\B(b_0\otimes b_2, b_1\otimes b_3)$ which commute with colimits in each variable. 
    \end{enumerate} 
\end{defn}
\begin{ex}
    $\B=\Mod_{\mathcal{V}}(\PrL)$ is weakly $2$-presentably symmetric monoidal for any $\mathcal{V}\in\CAlg(\PrL)$. 
\end{ex}
In the situation of the example above, the main result of this section is a simultaneous generalization of \cite[Example 4.39]{LocRig} and \cite[Proposition 4.2.4]{keidarragimov} (a twisted generalization of the former, and oplax generalization of the latter): 
\begin{thm}\label{thm:consisrig}
    Let $\B$ be a weakly $2$-presentably symmetric monoidal $2$-category, and $I$ a symmetric monoidal category. Suppose each object of $I$ is dualizable. In this case, for any $F:I\to \B$ symmetric monoidal, $\colim^\oplax_I F \in\CAlg(\B)$ is a rigid commutative algebra. 
\end{thm}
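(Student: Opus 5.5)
The plan is to check the three conditions in the definition of rigidity directly for $A:=\colim^\oplax_I F$, using \Cref{prop:incladj}, \Cref{prop:colimitladj} and \Cref{cor:mapformula}. Throughout, the commutative algebra structure is the one from \Cref{cons:colim}.

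\textbf{Unit.} Since $F$ is (strongly) symmetric monoidal, $F(\one_I)\simeq\one_\B$. The unit of $A$ is then the cocone map $\iota_{\one_I}:F(\one_I)\to A$. By \Cref{prop:incladj} this map is a left adjoint, since $\B$ is strongly locally presentable and $2$-cocomplete.

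\textbf{Multiplication.} Because $\otimes$ commutes with $2$-colimits in each variable (\Cref{defn:weak2prl}), we have $A\otimes A\simeq\colim^\oplax_{I\times I}F\boxtimes F$. Under this identification, $m$ is induced by the oplax cocone with components
\[F(x)\otimes F(y)\xrightarrow{\ \simeq\ }F(x\otimes y)\xrightarrow{\iota_{x\otimes y}}A.\]
Each component is a left adjoint: the first map is an equivalence because $F$ is strong monoidal, and the second is a left adjoint by \Cref{prop:incladj}. Hence, by \Cref{prop:colimitladj}, $m$ is a left adjoint. Here it matters that $F$ is strong, not merely lax, monoidal.

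\textbf{Strict linearity of $m^R$.} This is the real content. We must show that the Beck--Chevalley mate
\[(m\otimes A)\circ(A\otimes m^R)\longrightarrow m^R\circ m\]
is an equivalence. Both sides are maps $A\otimes A\to A\otimes A$, out of an oplax colimit over $I\times I$ and into an object with right adjoints $\iota_a\otimes\iota_b$ to its cocone maps. Precomposing with the inclusions $\iota_x\otimes\iota_y$ is jointly conservative, and testing against the $(\iota_a\otimes\iota_b)^R$ should also suffice: by \Cref{cor:mapformula}, $\id_{A\otimes A}$ is a colimit of the $\iota_{(a,b)}\iota_{(a,b)}^R$-type terms. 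So it is enough to compare, for all $x,y,a,b$, the maps $F(x)\otimes F(y)\to F(a)\otimes F(b)$ given by
\[(\iota_a\otimes\iota_b)^R(\ldots)(\iota_x\otimes\iota_y)\]
for the two sides.

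Using \Cref{cor:mapformula}, repeatedly rewriting $\iota^R\iota$ as a colimit of $F(f)$'s, the two sides compute as follows.
\begin{itemize}
\item The right side becomes $\colim_{\Map_I(x\otimes y,\,a\otimes b)}F(-)$.
\item The left side becomes a colimit over the ``coend'' $\int^{c}\Map_I(x\otimes c,a)\times\Map_I(y,c\otimes b)$.
\end{itemize}
Dualizability in $I$ gives $\Map_I(x\otimes c,a)\simeq\Map_I(c,x^\vee\otimes a)$. The coend then collapses by Yoneda to $\Map_I(y,x^\vee\otimes a\otimes b)\simeq\Map_I(x\otimes y,a\otimes b)$. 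So the indexing anima agree, and this is exactly where the hypothesis that objects of $I$ are dualizable enters.

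\textbf{Main obstacle.} The hard step is showing that the mate map is this identification, rather than merely that the two sides are abstractly equivalent. I would handle it as in the proofs of \Cref{prop:incladj} and \Cref{cor:mapformula}. First, reduce to the universal case: $\B$ tensored over $\PrL$ (\Cref{cor:Prtensor}), with $F$ factoring through a $\PrL$-linear Yoneda embedding into a presheaf-type $2$-category. There, every object involved is a terminal object in the relevant $\Hom$-category ($\An$ of a mapping anima), so any comparison map between them is automatically an equivalence. The remaining check is that this universal case is compatible with the monoidal structure, i.e. that the Day-convolution-type structure on the universal example agrees with the construction of \Cref{cons:colim}.
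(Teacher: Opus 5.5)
\textbf{There is a genuine gap in the strict-linearity step.} Your unit step, and the existence of $m^R$ in $\B$ via \Cref{prop:colimitladj} applied to the cocone $F(x)\otimes F(y)\simeq F(x\otimes y)\to A$, are fine. But strict linearity of $m^R$ is the real content of the theorem, and there your argument is a heuristic rather than a proof.

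First, \Cref{cor:mapformula} only computes composites of the form $\iota_j^R\iota_i$. It does not say that $\id_{A\otimes A}$ is a colimit of terms $\iota\iota^R$. (In the universal case, postcomposition with the $\iota^R$'s is jointly conservative, but for pointwise reasons.) Moreover, the left-hand composite $(\iota_a\otimes\iota_b)^R(m\otimes A)(A\otimes m^R)(\iota_x\otimes\iota_y)$ is not built from such terms, since $m^R$ and $m\otimes A$ sit in the middle. To obtain your coend you would need a general formula $h\circ g\simeq\int^c (h\iota_c)(\iota_c^R g)$ for composites passing through an oplax colimit, together with its coherences. Neither you nor the paper supplies this.

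Second, even granting that formula and passing to the universal case, ``every object is terminal'' is exactly what remains to be shown for the left-hand side. An object of $\An[\Map_I(x\otimes y,a\otimes b)]$ is terminal if and only if its structure map to $\Map_I(x\otimes y,a\otimes b)$ is an equivalence. So you need the composition map $\int^c\Map_I(x\otimes c,a)\times\Map_I(y,c\otimes b)\to\Map_I(x\otimes y,a\otimes b)$ itself to be the equivalence coming from duality. An abstract identification of the two anima is not enough. Together with the unchecked compatibility with the algebra structure of \Cref{cons:colim}, the decisive step is left open.

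\textbf{The missing idea.} Apply \Cref{prop:colimitladj} in $\Mod_A(\B)$ rather than in $\B$. Being a left adjoint in $\Mod_A(\B)$ means precisely having a right adjoint whose lax linear structure is strict.
\begin{itemize}
\item As an $A$-linear map, $m$ is the oplax colimit over $I$ of the action maps $A\otimes F(i)\to A$, because $A\otimes A\simeq\colim^\oplax_I (A\otimes F)$ in $\Mod_A(\B)$. So it suffices that each action map is an $A$-linear left adjoint.
\item The cocone $F\Rightarrow\cst(A)$ is a symmetric monoidal oplax transformation. Base change turns it into a symmetric monoidal oplax transformation $A\otimes F\Rightarrow\cst(A)$ between \emph{strong} symmetric monoidal functors $I\to\Mod_A(\B)$.
\item \Cref{lm:dualadj} then says the components of this transformation at dualizable objects are left adjoints in $\Mod_A(\B)$.
\end{itemize}
This is where dualizability of the objects of $I$ enters in the paper, and no mate ever has to be computed.
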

To prove this, we use the following lemma, which is an analogue of the classical fact that symmetric monoidal natural transformations are equivalences on dualizable objects:
\begin{lm}\label{lm:dualadj}
    Let $I$ be a symmetric monoidal category, and let $F,G: I\to \B$ be two symmetric monoidal functors. 

    Let $\eta: F\to G$ be a symmetric monoidal oplax natural transformation, and suppose $i \in I$ is dualizable. In this case, $\eta_i$ has a right adjoint given by $$G(i)\xrightarrow{F(\eta_i)} F(i^\vee)\otimes F(i) \xrightarrow{\eta_{i^\vee}} G(i)\otimes G(i^\vee)\otimes F(i)\xrightarrow{G(\epsilon_i)} F(i)$$ 
\end{lm}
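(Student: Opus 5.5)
The cleanest route is to avoid checking triangle identities by hand and instead use the same device as in the proof of \Cref{prop:laxstrict}. Consider the symmetric monoidal $2$-category $\Fun_\oplax(\Delta^1,\B)$ with the pointwise tensor product. By the description of $\Fun^{\otimes}_\oplax(I,\B)$ recalled in \Cref{recoll:Funoplax} (a corepresenting description in terms of $\Fun_\lax(K,\B)$, with the lax/oplax switch coming from non-commutativity of the Gray tensor product), a symmetric monoidal oplax transformation $\eta:F\to G$ between symmetric monoidal functors should be the same datum as a symmetric monoidal functor $\tilde\eta: I\to \Fun_\oplax(\Delta^1,\B)$. This functor sends $x$ to the arrow $\eta_x:F(x)\to G(x)$ and a morphism $f$ to the oplax square filled by the naturality $2$-cell. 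I would first make this identification precise, including the point that strict monoidality of $F,G$ together with the oplax monoidality squares of $\eta$ amounts to strong monoidality of $\tilde\eta$. Since symmetric monoidal functors preserve dualizable objects, $\eta_i=\tilde\eta(i)$ is then a dualizable object of $\Fun_\oplax(\Delta^1,\B)$, with dual $\eta_{i^\vee}$ and with evaluation and coevaluation given by the oplax squares $\tilde\eta(\epsilon_i)$ and $\tilde\eta(\mathrm{coev}_i)$.

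The second step is the key input: an object $f:a\to b$ of $\Fun_\oplax(\Delta^1,\B)$ that is dualizable has a right adjoint in $\B$. To see this, I would write down the unit and counit directly from the duality data. The coevaluation square gives a $2$-cell relating $(f\otimes f^\vee)\circ \mathrm{coev}_a$ and $\mathrm{coev}_b$. The evaluation square gives a $2$-cell relating $\epsilon_b\circ(f^\vee\otimes f)$ (suitably oriented) and $\epsilon_a$. Whiskering these with the dualities of $a$ and $b$ produces
\[
\id_{F(i)}\Rightarrow R\circ\eta_i \qquad\text{and}\qquad \eta_i\circ R\Rightarrow \id_{G(i)},
\]
where $R$ is the composite in the statement: coevaluation for $F(i)$, then $\eta_{i^\vee}$, then $G(\epsilon_i)$. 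The triangle identities then follow from the triangle identities of the duality in $\Fun_\oplax(\Delta^1,\B)$. Those identities hold as equalities of oplax squares, so their $2$-cell components give exactly the needed equalities of pasted $2$-cells, up to the zigzag identities for $F(i)$ and $G(i)$. All of this can be checked in the homotopy $2$-category, since only the existence of an adjunction is at stake, so there are few coherences.

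I expect the main obstacle to be the orientation bookkeeping. One must verify that the oplax direction of the squares makes the composite $G(i)\to F(i)$ a \emph{right} adjoint rather than a left one, with the $2$-cells pointing as required for a unit and counit. I would settle this first in the test case $\B=\Cat$ with $F,G$ strict, where everything reduces to the classical statement that $\eta_{i^\vee}$ is inverse-dual to $\eta_i$. I would then check the direction of each whiskered $2$-cell against the convention fixed in \Cref{recoll:Funoplax}.
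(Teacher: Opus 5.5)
Your proposal follows the paper's argument: the paper also identifies $\eta$ with a symmetric monoidal functor from $I$ into the arrow $2$-category ($\Lax_{(1)}(\B)$ of \cite{HSS}) and observes that $\eta_i$ is dualizable there, then concludes by citing \cite[Lemma 2.4]{HSS}, which is exactly the ``dualizable arrows are left adjoints, with explicit right adjoint'' statement you propose to reprove by hand from the duality data. One small correction: strong monoidality of $\tilde\eta$ needs the monoidality squares of $\eta$ to commute up to \emph{invertible} $2$-cells (this is built into the definition of $\Fun^\otimes_\oplax$ in \cite{HSS}), because your unit and counit use these $2$-cells in opposite directions, and merely oplax monoidality squares would only make $\tilde\eta$ lax monoidal.
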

\begin{proof}
    By definition, $\eta$ corresponds to a symmetric monoidal functor $I\to \Lax_{(1)}(\B)$ in the notation of \cite[Section 2.2]{HSS}, and so $\eta_i$ is dualizable in $\Lax_{(1)}(\B)$. By \cite[Lemma 2.4]{HSS}, $\eta_i$ is a left adjoint, and the right adjoint is spelled out in \textit{loc. cit.}. 
    \end{proof}
    
\begin{proof}[Proof of \Cref{thm:consisrig}]
We first observe that by \Cref{prop:incladj}, the unit map $\one_\B \simeq F(\one_I) \to \colim^\oplax_I F$ is a left adjoint in $\B$. 

We now let $C:=\colim^\oplax_I F$ for notational convenience. 

Consider the multiplication map $C\otimes C\to C$. We need to show it is a left $C$-linear left adjoint. Note that as a $C$-linear map, it is given as the oplax colimit of the maps $m_i: C\otimes F(i)\to C$ and thus by \Cref{prop:colimitladj} it suffices to show that each of these is a left adjoint. 

However, we have a symmetric monoidal oplax transformation of lax symmetric monoidal functors $I\to \B$ from $F$ to the constant functor at $C$, by definition. By basechange, this induces a symmetric monoidal oplax transformation of strong symmetric monoidal functors $I\to \Mod_C(\B)$ from $C\otimes F$ to $C$.

By \Cref{lm:dualadj}, it follows that each $C\otimes F(i)\to C$ is a $C$-linear left adjoint, as was to be shown. 

\end{proof}

\begin{ex}\label{ex:univonI}
Let $\B =\Fun(I\op,\PrL)$ for some symmetric monoidal category $I$, and consider the $\PrL$-linearized Yoneda embedding $Y: I\to \B$ as a symmetric monoidal functor. 

In $\colim^\oplax_I Y$, the unit object is terminal. 
\end{ex}
\begin{proof}
The oplax colimit is the presheaf category on the oplax colimit computed in $\Fun(I\op,\Cat)$, namely $i\mapsto (I_{i/})\op$. 

The unit object is the unit object in $\An[(I_{\one/})\op]$ which is the Yoneda image of $\one\in (I_{\one/})\op$ and is therefore terminal. 
\end{proof}

To finally describe our construction, we simply recall the following elementary statement, where $\Cob$ is as in \Cref{nota:cob} and denotes the free symmetric monoidal category on a dualizable object:
\begin{lm}[{\cite[Corollary 2.21]{CCRY}}]\label{lm:oplaxdbl}
    Let $\B$ be a symmetric monoidal $2$-category. Evaluation at the universal dualizable object $+\in\Cob$ induces an equivalence $$\Fun^\otimes_\oplax(\Cob,\B)\xrightarrow{\simeq} \B^\dbl$$ where $\B^\dbl$ is as in \Cref{nota:dbl}. 
\end{lm}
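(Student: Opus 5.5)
We have to show that evaluation at $+$ gives an equivalence $\Fun^\otimes_\oplax(\Cob,\B)\xrightarrow{\simeq}\B^\dbl$. The plan is to upgrade the $1$-categorical cobordism hypothesis, which identifies symmetric monoidal functors $\Cob\to C$ with the groupoid of dualizable objects of $C$, to one that also records oplax transformations. The device is the lax arrow construction $\Lax_{(1)}(\B)$ of \cite[Section 2.2]{HSS}, already used in \Cref{lm:dualadj}. By the description in \Cref{recoll:Funoplax}, a functor $[n]\to\Fun^\otimes_\oplax(\Cob,\B)$ is the same as a symmetric monoidal functor $\Cob\to\Fun_\lax([n],\B)$, with the pointwise tensor product. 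So it suffices to identify, naturally in $n$, the groupoid of such functors with the groupoid of functors $[n]\to\B^\dbl$.

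\textbf{Step 1.} By the cobordism hypothesis, applied to the symmetric monoidal category underlying $\Fun_\lax([n],\B)$, symmetric monoidal functors $\Cob\to\Fun_\lax([n],\B)$ correspond to dualizable objects of $\Fun_\lax([n],\B)$. Evaluation at $+$ is exactly the functor realising this correspondence, so it remains to identify these dualizable objects.

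\textbf{Step 2.} We must show that an object of $\Fun_\lax([n],\B)$, namely a chain $b_0\to\cdots\to b_n$, is dualizable if and only if every $b_i$ is dualizable and every arrow is a left adjoint. For $n=1$ this is \cite[Lemma 2.4]{HSS}: a lax square object is dualizable iff its endpoints are dualizable and the arrow is a left adjoint. For general $n$, I would reduce to $n=1$. Dualizability is detected by the evaluations at $[0]$ together with restriction to the edges $[1]\to[n]$, because $\Fun_\lax([n],\B)$ is built from its edge restrictions by iterated fiber products over $\B$ along the Segal decomposition, and these restrictions are symmetric monoidal. Dualizability in a fiber product of symmetric monoidal categories along symmetric monoidal functors is detected componentwise, since duality data is unique up to contractible choice.

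\textbf{Step 3.} The dualizable objects identified in Step 2 are precisely the objects of $\Fun([n],\B^\dbl)$. These identifications must be compatible with the simplicial structure maps, which should follow because all the constructions involved are functorial in $[n]$. We then conclude by the Yoneda lemma for complete Segal objects, so that the resulting map of underlying categories is an equivalence.

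The main obstacle is the reduction in Step 2. It has two parts. First, we need that $\Fun_\lax([n],\B)$ really is the iterated pullback of copies of $\Fun_\lax([1],\B)$ over $\B$; this is a Segal-type property of the lax Gray functor category, which I would take from \cite{HSS}. Second, the chosen duals and evaluation and coevaluation maps on the edges must glue. I would first try to get the gluing from contractibility of the space of duality data, as indicated in Step 2.
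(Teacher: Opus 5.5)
Your proposal is correct. The paper gives no proof of its own and imports the statement from \cite[Corollary 2.21]{CCRY}; your argument is the standard route to that result: apply the cobordism hypothesis to the symmetric monoidal categories $\Fun_\lax([n],\B)$, then use the characterization from \cite{HSS} of dualizable objects in the lax arrow category as left adjoints between dualizable objects.

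Two minor remarks. First, the gluing over the Segal decomposition in your Step 2 can be bypassed: $\Fun^\otimes_\oplax(\Cob,\B)$ and $\B^\dbl$ already satisfy the Segal condition, so it suffices to check that evaluation at $+$ is a monomorphism with the correct image on $0$- and $1$-simplices. Second, your argument identifies the underlying $(\infty,1)$-category of $\Fun^\otimes_\oplax(\Cob,\B)$, which is all the paper uses. For a genuinely $2$-categorical statement you would also have to check that its $2$-morphisms are invertible, for instance by testing against the walking $2$-cell.
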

\begin{nota}\label{nota:cobrep}
Let $\B$ be a symmetric monoidal $2$-category. We let $\Cob_\B: \B^\dbl\to\Fun^\otimes_\oplax(\Cob,\B)$ denote the inverse to the equivalence described above.
    
\end{nota}

Fixing $I$ to be the free symmetric monoidal category on a dualizable object, $\Cob$ from \Cref{nota:cob} we thus obtain the following construction:
\begin{cons}\label{cons:Frig}
    We let $F_\B^\rig: \B^\dbl\to\CAlg^\rig(\B)$ denote the corestriction of the composite functor $$\B^\dbl\simeq \Fun^\otimes_\oplax(\Cob,\B)\xrightarrow{\colim^\oplax} \CAlg(\B)$$ using \Cref{lm:oplaxdbl} for the first equivalence, \Cref{cons:colim} for the second map and finally \Cref{thm:consisrig}  to corestrict it to $\CAlg^\rig$.  

    Letting $U:\CAlg^\rig(\B)\to \B^\dbl$ denote the forgetful functor, $F^\rig_\B$ comes with a natural transformation $\eta: \id\to UF^\rig_\B$ given by the canonical map $F(+)\to \colim^\oplax_\Cob F$. 
\end{cons}
$F^\rig_\B$ is our candidate for the ``free rigid commutative algebra'' functor. We will prove in \Cref{section:proof} that this candidate is correct.
\begin{rmk}
    A way to circumvent \Cref{cons:colim} is to use the computation from \Cref{ex:univonI}: in this case, one can give an a priori construction of the commutative algebra structure on the oplax colimit given its explicit nature, and hence deduce it in general by universality. 
\end{rmk}
\begin{rmk}
    We close this section with a remark regarding our application to \cite{KNS}. It turns out that if all we cared about was the results from \textit{loc.cit.}, and one is careful enough in the proofs therein, the existence of $F^\rig_\B$ and the computations from \Cref{section:calculations} are actually sufficient to prove the main results. In other words, ultimately, knowing that $F^\rig_\B(b)$ is actually free on $b$ is not needed for the results of \cite{KNS}.
\end{rmk}
\section{The proof}\label{section:proof}
We now prove our main result, which is:
\begin{thm}\label{thm:maintext}
Let $\B$ be a weakly $2$-presentably symmetric monoidal $2$-category and $x\in \B^\dbl$ a dualizable object. The free rigid commutative algebra on $x$ exists, and is given by the commutative algebra structure from \Cref{cons:colim} on $\colim^\oplax_\Cob \Cob_\B(x)$, $\Cob_\B(x)$ is as in \Cref{nota:cobrep}.  
\end{thm}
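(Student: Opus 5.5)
We need to show that for $R$ rigid, precomposition with $\eta_x: x\to F^\rig_\B(x)$ induces an equivalence
$$\Map_{\CAlg^\rig(\B)}(F^\rig_\B(x),R)\to \Map_{\B^\dbl}(x,U R).$$
I will do this by exhibiting a counit $\epsilon_R: F^\rig_\B(UR)\to R$ and checking the triangle identities. The plan splits into existence of the counit, one triangle identity that is formal, and one that reduces to the universal case.

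\textbf{Constructing the counit.} By \Cref{lm:dualrig}, $UR$ is dualizable, so $\Cob_\B(UR)$ is defined. To produce a commutative algebra map out of $\colim^\oplax_\Cob\Cob_\B(UR)$, it suffices by the adjunction of \Cref{cons:colim} to give a symmetric monoidal oplax transformation from $\Cob_\B(UR)$ to the constant lax symmetric monoidal functor at $R$. The natural candidate sends $M^+\coprod N^-$ to the map
$$R^{\otimes M}\otimes (R^\vee)^{\otimes N}\simeq R^{\otimes M+N}\to R,$$
using the self-duality of $R$ and the multiplication.

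The real content is the oplax coherence along cobordisms. For instance, the coevaluation $\one\to R\otimes R^\vee$, given by $m^R\eta$, followed by multiplication must receive a $2$-cell from $\eta$; this cell is the unit $\id\to mm^R$ composed with $\eta$. Rather than checking coherences by hand, I would build the cocone universally, which is presumably what the paper calls \Cref{cons:cocone}. Concretely, I would consider a universal symmetric monoidal $2$-category carrying a rigid algebra (\`a la Neuhauser), or more cheaply use that $R$ itself is a dualizable object with a map to $R$ in $\Fun_\oplax$. I expect this step to be the main obstacle: producing a coherent symmetric monoidal oplax cocone $\Cob_\B(UR)\Rightarrow \cst_R$ natural in $R$. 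My first attempt would be \Cref{lm:oplaxdbl} applied to the symmetric monoidal $2$-category $\Fun_\oplax(\Delta^1,\B)$, or an arrow category over $R$. In such a category, the pair $(UR\to R)$ would be a dualizable object because the multiplication is linear-adjointable, and this would yield the oplax transformation functorially.

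\textbf{First triangle identity.} The composite $UR\to UF^\rig_\B(UR)\to UR$ is the component of the cocone at $+$, namely $\id$. This is immediate from the construction.

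\textbf{Second triangle identity via the universal case.} It remains to show that
$$F^\rig_\B(x)\xrightarrow{F^\rig(\eta_x)}F^\rig_\B(UF^\rig_\B(x))\xrightarrow{\epsilon}F^\rig_\B(x)$$
is the identity. Equivalently, a map $F^\rig_\B(x)\to F^\rig_\B(x)$ of rigid algebras restricting to $\eta_x$ on $x$ must be the identity. Since $F^\rig_\B(x)$ is generated as an oplax colimit by the objects $x^{\otimes M}\otimes(x^\vee)^{\otimes N}$, algebra maps out of it are determined by their restriction to $x$ together with the compatible oplax structure.

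To make this rigorous, I would use \Cref{cor:rigff2}: show the endomorphism is fully faithful, then essentially surjective, or better, an equivalence. By naturality everything is pulled back from the universal $\B$ carrying a dualizable object, namely $\Fun(\Cob\op,\PrL)$ with $x=Y(+)$. There, \Cref{ex:univonI} says the unit is terminal. Hence $\Hom(\one,\colim^\oplax Y)$ has a terminal object, and the relevant maps out of the unit, $\eta^R$-type data, are forced. This degeneracy should force the comparison maps to be equivalences via \Cref{cor:rigff}.

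Finally, one transports the statement along the weakly $2$-presentable functor from the universal example to $\B$. This uses that $\colim^\oplax$ and $F^\rig$ commute with such $\PrL$-tensored functors, which follows from \Cref{cor:Prtensor} and condition (2) of \Cref{defn:weak2prl}.
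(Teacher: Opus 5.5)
Your outline has the right ingredients: the unit, a counit coming from rigidity, reduction to $\Fun(\Cob\op,\PrL)$ with $x=Y(+)$, and the terminal unit combined with \Cref{cor:rigff}. However, the step carrying the weight, your second triangle identity, does not go through as written.

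\textbf{The second triangle identity.} You claim that the endomorphism $\psi:=\epsilon_{F^\rig_\B(x)}\circ F^\rig_\B(\eta_x)$, which restricts to $\eta_x$, ``must be the identity''. That is exactly the uniqueness half of freeness, so it cannot be invoked. The degeneracy of the universal case only gives you full faithfulness: $\eta$ is terminal in $\Hom(\one,F^\rig(Y(+)))$ by \Cref{ex:univonI}, and $\psi^R\circ-$ preserves terminal objects, so $\eta\to\psi^R\psi\eta$ is an equivalence and \Cref{cor:rigff} applies. Nothing in your sketch upgrades this to ``equivalence'', let alone ``identity''. An essential-surjectivity argument would need $\psi$ to fix every cocone map $x^{\otimes M}\otimes(x^\vee)^{\otimes N}\to F^\rig_\B(x)$, including the $x^\vee$ one, and you do not address this.

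The missing idea is that you never need the second triangle identity. The first one alone exhibits $\Map_{\B^\dbl}(x,U(-))$ as a retract of $\Map_{\CAlg^\rig(\B)}(F^\rig_\B(x),-)$, and by naturality of $\epsilon$ the idempotent is precomposition with $\psi$. Since $\CAlg^\rig(\B)$ is idempotent complete, this gives a genuine left adjoint $L^\rig_\B$ together with $i\colon L^\rig_\B(x)\to F^\rig_\B(x)$ and $p\colon F^\rig_\B(x)\to L^\rig_\B(x)$, where $pi\simeq\id$ and $ip\simeq\psi$ (\Cref{cor:exist}). The terminal-unit argument then shows $p$ is fully faithful. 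A fully faithful map with a section is an equivalence. Equivalently: postcomposition with the fully faithful $\psi$ is fully faithful on Hom-categories, so $\psi\psi\simeq\psi$ forces $\psi\simeq\id$. Transport from the universal case then works as you say, since $\overline{x}$ preserves oplax colimits and all the data are natural in $\B$.

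\textbf{The counit.} Your counit step is not yet a construction either. \Cref{lm:oplaxdbl} classifies oplax transformations between \emph{strong} symmetric monoidal functors $\Cob\to\B$. But $\cst(R)$ is only lax symmetric monoidal as a functor to $\B$, since its structure maps are the multiplication. So it cannot be the target of anything produced by \Cref{lm:oplaxdbl} in $\B$ or in $\Fun_\oplax(\Delta^1,\B)$.

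The fix (\Cref{cons:cocone}) is to apply \Cref{lm:oplaxdbl} in $\Mod_R(\B)$ instead:
\begin{itemize}
\item there $\cst(R)\simeq\Cob_{\Mod_R(\B)}(R)$, because $R$ is the unit;
\item by rigidity, $m\colon R\otimes R\to R$ is an $R$-linear left adjoint between dualizable objects, so the lemma gives a symmetric monoidal oplax transformation $\Cob_{\Mod_R(\B)}(R\otimes R)\Rightarrow\cst(R)$;
\item since $\Cob_{\Mod_R(\B)}(R\otimes R)\simeq R\otimes\Cob_\B(UR)$, the free--forgetful adjunction turns this into the lax symmetric monoidal oplax cocone $\Cob_\B(UR)\Rightarrow\cst(R)$, naturally in $(\B,R)$.
\end{itemize}
Your remark that the multiplication is ``linear-adjointable'' points this way, but the ambient category has to be $\Mod_R(\B)$, not an arrow category of $\B$.
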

\begin{rmk}
    It is likely possible to give an abstract proof of existence of the free rigid commutative algebra by proving ahead of time that $\CAlg^\rig(\B)\to \B^\dbl$ preserves limits, as in \cite[Corollary 4.89]{LocRig}, and proving that both categories are presentable. We have done so in \cite{LocRig} in the case of $\B=\PrL_\st$, but not in the generality here. In any case, we prove existence along the way, and thereby \emph{recover} \cite[Corollary 4.89]{LocRig} without the careful analysis that goes into the proof in \emph{loc. cit.}, which is quite surprising (at least, to the author). 
\end{rmk}
We begin with an elementary construction: 
\begin{cons}\label{cons:cocone}
    Let $A\in \CAlg^\rig(\B)$ be a rigid commutative algebra. The multiplication map $A\otimes A\to A$ is an $A$-linear left adjoint, and hence by \Cref{lm:oplaxdbl} it induces a symmetric monoidal oplax transformation of functors $\Cob\to \Mod_A(\B)$ from $\Cob_{\Mod_A(\B)}(A\otimes A)$ to the constant functor at $A$, $\cst(A)$ (which is $\Cob_{\Mod_A(\B)}(A)$ since $A$ is the unit in $\Mod_A(\B)$). 
    
    By adjunction, it induces a lax symmetric monoidal, oplax transformation of functors $\Cob\to \B$ from $\Cob_\B(A)$ to $\cst(A)$, since $A\otimes\Cob_\B(A)\simeq \Cob_{\Mod_A(\B)}(A\otimes A)$.

    In other words, we obtain a lax symmetric monoidal oplax cocone $\Cob_\B(UA)\to \cst(A)$, and this construction is clearly natural in the pair $(\B,A \in\CAlg^\rig(\B))$. We thus obtain a natural map of rigid commutative algebras $$\epsilon_A: F^\rig_\B(UA)\to A,$$ also natural in $(\B,A)$. 
\end{cons}

\begin{cor}\label{cor:exist}
    Let $\B$ be a weakly $2$-presentably symmetric monoidal $2$-category. The forgetful functor $U: \CAlg^\rig(\B)\to \B^\dbl$ admits a left adjoint, which is a retract of $F^\rig_\B$. 
\end{cor}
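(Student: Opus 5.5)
We need to show that the forgetful functor $U$ admits a left adjoint, and that this left adjoint is a retract of $F^\rig_\B$. We have the unit $\eta:\id\to UF^\rig_\B$ from \Cref{cons:Frig} and the counit-like map $\epsilon: F^\rig_\B U\to \id$ from \Cref{cons:cocone}. These do not obviously satisfy the triangle identities, so we will not try to prove that $F^\rig_\B$ itself is left adjoint. Instead we use the standard retract criterion. A functor $U$ admits a left adjoint as soon as, for every $x\in\B^\dbl$, the corepresentable-type functor $\Map_{\B^\dbl}(x,U-)$ is a retract of a corepresentable functor, provided retracts can be split in $\CAlg^\rig(\B)$. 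Concretely, we want $\Map(x,U-)$ to be a retract of $\Map_{\CAlg^\rig}(F^\rig_\B x,-)$.

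The two maps are as follows. The map $\Map(F^\rig_\B x,A)\to\Map(x,UA)$ is $\phi\mapsto U\phi\circ\eta_x$. The map back, $\Map(x,UA)\to \Map(F^\rig_\B x,A)$, sends $g:x\to UA$ (a left adjoint between dualizables) to $\epsilon_A\circ F^\rig_\B(g)$. We must check that the composite $\Map(x,UA)\to\Map(x,UA)$, namely $g\mapsto U\epsilon_A\circ UF^\rig_\B(g)\circ\eta_x$, is homotopic to the identity, naturally in $A$. By naturality of $\eta$ this equals $U\epsilon_A\circ\eta_{UA}\circ g$. So it suffices to prove the single triangle identity $U\epsilon_A\circ\eta_{UA}\simeq\id_{UA}$, naturally in $A$.

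Unwinding \Cref{cons:cocone}, the composite $UA=\Cob_\B(UA)(+)\to\colim^\oplax\Cob_\B(UA)\to A$ is the component at $+$ of the oplax cocone $\Cob_\B(UA)\to\cst(A)$. That cocone was obtained by adjunction from the transformation $\Cob_{\Mod_A(\B)}(A\otimes A)\to\cst(A)$, whose component at $+$ is the multiplication $m:A\otimes A\to A$. Under the free/forget adjunction its restriction to $UA$ is $m\circ(\eta_A\otimes\id)\simeq\id$. The main care needed is verifying that the identification of the induced algebra map's restriction to $F(+)$ with the cocone leg at $+$ is compatible with \Cref{cons:colim}. This is exactly the definition of the left adjoint in \Cref{cons:colim}: the unit of that adjunction is the cocone inclusion.

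With the retraction in hand, we show that $\Map(x,U-)$ is corepresentable. The idempotent $e=F^\rig_\B(\cdot)$-endomorphism of $F^\rig_\B x$, corresponding to $\eta_x$ under the retraction, is a coherent idempotent. It splits provided $\CAlg^\rig(\B)$ has sequential colimits of the relevant kind. Rather than prove general idempotent completeness, we can use the retract directly. A functor that is a retract of a corepresentable functor in $\Fun(\CAlg^\rig(\B),\An)$ is corepresented by the splitting of the induced idempotent. That splitting exists as a colimit of the idempotent diagram in $\CAlg(\B)$, which is computed via a sifted colimit preserved by the forgetful functor. Rigidity of the resulting algebra then follows because retracts of rigid algebras in $\CAlg(\B)$ are rigid. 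Checking this last point is the expected main obstacle. We would handle it via \Cref{cor:rigff2}-type arguments, or by noting that the retract's unit and multiplication are retracts of left adjoints with linear right adjoints. The resulting corepresenting object is the left adjoint's value, and it is by construction a retract of $F^\rig_\B x$, naturally in $x$.
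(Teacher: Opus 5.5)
Your proposal is correct and is essentially the paper's proof: you reduce to the single triangle identity $U\epsilon\circ\eta_U\simeq\id_U$, which you check as the paper does by observing that the cocone leg at $+$ is $m\circ(\eta_A\otimes\id)\simeq\id$. You then deduce that $\Map_{\B^\dbl}(x,U(-))$ is a retract of $\Map_{\CAlg^\rig(\B)}(F^\rig_\B(x),-)$, and conclude by splitting the idempotent in $\CAlg^\rig(\B)$. The point you flag as the main obstacle, closure of $\CAlg^\rig(\B)$ under retracts in $\CAlg(\B)$, is simply asserted as clear in the paper. Your suggested argument is the right way to fill it in: the unit and multiplication are retracts of left adjoints, hence left adjoints because the hom-categories are presentable and therefore idempotent complete, and the linearity $2$-cells are retracts of invertible ones.
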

\begin{proof}
    By \Cref{cons:Frig} and \Cref{cons:cocone} we have a ``unit'' $\eta: \id\to UF^\rig_\B$ and a ``counit'' $\epsilon : F^\rig_\B U\to \id$. For the duration of the proof, we simply write $F^\rig$. 

    For $x\in \B^\dbl$, we have the following commutative diagram: 
    \[\begin{tikzcd}
	{\Map_{\B^\dbl}(x,U(-))} & {\Map_{\CAlg^\rig(\B)}(F^\rig(x),F^\rig U(-))} & {\Map_{\B^\dbl}(UF^\rig (x), UF^\rig U(-))} \\
	&& {\Map_{\B^\dbl}(x,UF^\rig U(-))} \\
	&& {\Map_{\B^\dbl}(x,U(-))}
	\arrow[from=1-1, to=1-2]
	\arrow["{(\eta U \circ U\epsilon)\circ -}"', from=1-1, to=3-3]
	\arrow[from=1-2, to=1-3]
	\arrow[from=1-3, to=2-3]
	\arrow[from=2-3, to=3-3]
\end{tikzcd}\]

Thus, if we verify one of the triangle identities, namely that the composite $U\xrightarrow{\eta_U} UF^\rig U\xrightarrow{U\epsilon} U$ is equivalent to the identity, we will obtain that the diagonal arrow in this diagram is the identity.

By naturality, we also have a commutative diagram as follows: 
\[\begin{tikzcd}
	{\Map_{\CAlg^\rig(\B)}(F^\rig(x),F^\rig U(-))} & {\Map_{\B^\dbl}(UF^\rig (x), UF^\rig U(-))} \\
	{\Map_{\CAlg^\rig(\B)}(F^\rig(x),-)} & {\Map_{\B^\dbl}(x,UF^\rig U(-))} \\
	{\Map_{\B^\dbl}(UF^\rig(x), U(-))} & {\Map_{\B^\dbl}(x,U(-))}
	\arrow[from=1-1, to=1-2]
	\arrow[from=1-1, to=2-1]
	\arrow[from=1-2, to=2-2]
	\arrow[from=2-1, to=3-1]
	\arrow[from=2-2, to=3-2]
	\arrow[from=3-1, to=3-2]
\end{tikzcd}\]

Therefore, if the triangle identity from above is verified, we will find that $\Map_{\B^\dbl}(x,U(-))$ is a retract of $\Map_{\CAlg^\rig(\B)}(F^\rig(x),-)$. 

Since $\CAlg^\rig(\B)$ is idempotent complete (it is clearly closed under retracts in $\CAlg(\B)$), retracts of corepresentable copresheaves are themselves corepresentable (by a retract of the corepresenting object), and thus the claim will be proved.

We now verify the triangle identity in question: by definition, the map $U(A)\to UF^\rig U(A)$ is the canonical map $U(A)\to \colim^\oplax_\Cob \Cob_\B(UA)$. In turn, the oplax cocone $\Cob_\B(U(A))\to U(A)$ is clearly the identity on $+$, and so the triangle identity follows.
\end{proof}

\begin{nota}
Let $\B$ be weakly $2$-presentably symmetric monoidal. Until the end of the section, we let $L^\rig_\B$ denote the left adjoint to $U:\CAlg^\rig(\B)\to \B^\dbl$, which exists by \Cref{cor:exist}, and we let $i: L^\rig_\B\to F^\rig_\B, p: F^\rig_\B\to L^\rig_\B$ denote the retraction that exists, also by virtue of \Cref{cor:exist}. 
\end{nota}
\begin{proof}[Proof of \Cref{thm:maintext}]
Let $x\in \B^\dbl$. There is a functor of $2$-presentably symmetric monoidal $2$-categories $\overline{x}:\Fun(\Cob\op,\PrL)\to \B$ sending $Y(+)$ to $x$, where $Y$ is the $\PrL$-linearized Yoneda embedding.  

We want to prove that $i,p$ are inverse equivalences. $\overline{x}$ commutes with $F^\rig$ since the latter is given by an oplax colimit, and hence also with $L^\rig$, since $L^\rig$ is a retract of $F^\rig$. It therefore suffices to prove the result in the special case $\B=\Fun(\Cob\op,\PrL)$. For the rest of the proof, $L^\rig$ and $F^\rig$ are considered in this specific case. 

  Now, it suffices to prove that $p$ is fully faithful, since already $p\circ i \simeq \id$. 

    By \Cref{cor:rigff}, it suffices to prove that $p^R$ sends the unit to the unit. 

    In turn, by \Cref{ex:univonI}, the unit in $F^\rig(Y(+))$ is terminal. Since $L^\rig(Y(+))$ is a retract of $F^\rig(Y(+))$, the unit is terminal therein too. But $p^R$ is a right adjoint, so it sends terminal objects to terminal objects, and hences sends the unit to the unit, as was to be shown. 
\end{proof}
\begin{rmk}\label{rmk:adjt}
    A slightly surprising feature of the situation is the following: it is not \textit{a priori} clear to the author that general $2$-colimit-preserving, locally colimit-preserving functors of weakly $2$-presentably symmetric monoidal $2$-categories commute with $L^\rig$, even when they have right adjoints: these right adjoints have no reason to preserve dualizability or rigidity, and so the ``obvious'' adjointability argument fails. 

    However, as a consequence of the theorem, or really, of \Cref{cor:exist}, they do commute with $L^\rig$! This turns out to have interesting consequences, which we explore in \Cref{section:locrig}.  
\end{rmk}
Since we now know that $F^\rig_\B$ is free as a rigid commutative algebra, we can give it a slightly better notation:
\begin{nota}
    We let $\Frrig_\B := F^\rig_\B$.

    When $\B=\Mod_\V(\PrL)$ for some $\V\in\CAlg(\PrL)$, we abusively write $\Frrig_\V$.
\end{nota} 
\section{Some 
calculations}\label{section:calculations}
The goal of this section is to make some calculations with oplax colimits. From our perspective, these calculations are what makes the main theorem actually usable in practice. 

Specifically, we unwind a number of corollaries from \Cref{cor:mapformula}. First, a claculation of endomorphisms of the unit in free rigid commutative algebras, as announced in the introduction. We first define this: 
\begin{defn}\label{defn:End1}
    Let $\B$ be a symmetric monoidal $2$-category and $A$ a commutative algebra in $\B$ such that the unit $\eta:\one_\B\to A$ has a right adjoint. We define the \emph{endomorphism ring object of the unit of} $A$, $\End(\one_A)$, as $\eta^R\eta\in \End_\B(\one_\B)$, which has a canonical commutative algebra structure. 
\end{defn}
We can now state and prove:
\begin{cor}\label{cor:End1}
    Let $\B$ be a weakly $2$-presentably symmetric monoidal $2$-category, and $b\in \B^\dbl$. 

    The endomorphism object of the unit in the free rigid commutative algebra on $b$ is the free commutative algebra in $\End(\one_\B)$ on $\Dim(b)_{hS^1}$. 
\end{cor}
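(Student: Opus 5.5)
By \Cref{thm:maintext}, $\Frrig_\B(b)=\colim^\oplax_\Cob F$ with $F=\Cob_\B(b)$, and the unit is the cocone map $\iota_\emptyset: F(\emptyset)=\one_\B\to \colim^\oplax_\Cob F$. The plan is to compute $\iota_\emptyset^R\iota_\emptyset$ with \Cref{cor:mapformula} (taking $i=j=\emptyset$), identify the result with a free commutative algebra through the combinatorics of $\Cob$, and only then check the multiplicative structure.

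\textbf{Step 1: the underlying object.} \Cref{cor:mapformula} gives
\[\End(\one_{\Frrig_\B(b)})\simeq\colim_{\Map_\Cob(\emptyset,\emptyset)}F(f)\in\End_\B(\one_\B).\]
By the cobordism hypothesis, $\Map_\Cob(\emptyset,\emptyset)$ is the anima of closed oriented $1$-manifolds. This is the free $\EE_\infty$-space on $\B S^1$, namely $\coprod_n (\B S^1)^n_{h\Sigma_n}\simeq\coprod_n (\B S^1\wr\Sigma_n)$. The functor $f\mapsto F(f)$ restricted to it is a symmetric monoidal functor from this $\EE_\infty$-space to $\End_\B(\one_\B)$, using disjoint union and the tensor product; its value on a circle is $\Dim(b)$, with the $S^1$-action coming from rotation.

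\textbf{Step 2: colimit computation.} For a symmetric monoidal functor $G:\mathrm{Free}_{\EE_\infty}(X)\to\End(\one_\B)$, I expect
\[\colim G\simeq\coprod_n\bigl(\colim_X G\bigr)^{\otimes n}_{h\Sigma_n}=\mathrm{Sym}\bigl(\colim_X G\bigr).\]
This uses that $\otimes$ on $\End(\one_\B)$ commutes with colimits in each variable, which follows from strong local presentability and condition (3) of \Cref{defn:weak2prl}. Applied with $X=\B S^1$, $\colim_{\B S^1}\Dim(b)=\Dim(b)_{hS^1}$, so the underlying object is $\mathrm{Sym}(\Dim(b)_{hS^1})$.

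\textbf{Step 3: multiplicative structure (main obstacle).} Step 2 only identifies the underlying object; the ring structure must be shown to be the one coming from disjoint union. The algebra structure on $\eta^R\eta$ comes from the lax monoidality of $\eta^R$. It has to be matched with the colimit of the symmetric monoidal functor $F|_{\Map(\emptyset,\emptyset)}$, whose colimit inherits a commutative algebra structure.

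To do this I would first reduce to the universal case $\B=\Fun(\Cob\op,\PrL)$ with $b=Y(+)$, using functoriality of the construction along $\overline{b}$ as in the proof of \Cref{thm:maintext}. Both sides are compatible with such functors, since they preserve oplax colimits and the relevant colimits. In the universal case, \Cref{cor:mapformula} says both objects are terminal in $\An[\Map_\Cob(\emptyset,\emptyset)]$, with Day convolution in the appropriate sense. A commutative algebra structure on a terminal object of a presheaf category with Day convolution is essentially unique, because $\CAlg$ of a terminal object is contractible. So the two structures agree in the universal case, and therefore in general.

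The remaining delicate point is checking that the algebra structure on $\eta^R\eta$ coming from \Cref{cons:colim} corresponds under \Cref{cor:mapformula} to the Day-convolution structure. I would argue this via naturality of the Beck--Chevalley maps, which may require consulting the appendix construction.
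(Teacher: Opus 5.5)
Your proposal is correct and follows essentially the paper's proof. You compute the underlying object via \Cref{cor:mapformula} and the description of $\Map_\Cob(\emptyset,\emptyset)$ as the free commutative monoid on $BS^1$, and you then fix the commutative algebra structure by passing to the universal example $\Fun(\Cob\op,\PrL)$, where the object is terminal and so carries a unique commutative algebra structure. Your final ``delicate point'' is therefore superfluous: uniqueness of commutative algebra structures on a terminal object holds in any symmetric monoidal category, so you need neither an identification with Day convolution nor the appendix's Beck--Chevalley maps.
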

\begin{proof}
    By \Cref{cor:mapformula} the composite $$\one_\B\simeq \Cob_\B(b)(\emptyset) \xrightarrow{\iota_\emptyset} \colim^\oplax_\Cob\Cob_\B(b)\xrightarrow{(\iota_\emptyset)^R} \Cob_\B(b)(\emptyset)\simeq \one_\B$$ is given by $\colim_{f\in\Map_\Cob(\emptyset,\emptyset)} \Cob_\B(b)(f)$. 
    
    We know that $\Map(\emptyset,\emptyset)$ is the free commutative monoid on $BS^1$ in $\An$, i.e. $\coprod_n ((BS^1)^{\times n})_{h\Sigma_n}$ and so we find that the resulting colimit is given, on underlying objects, by $\coprod_n (\Dim(b)_{hS^1})^{\otimes n}_{h\Sigma_n}$.
    
    In the universal example $\B=\Fun(\Cob\op,\PrL)$, this object is terminal in $\End(\one_\B)$ by \Cref{ex:univonI}, and so has a unique commutative algebra structure. Therefore it is the free one in the universal example, and hence this is also the case in general.
\end{proof}

To state our next result, which computes more generally ``mapping objects'', we introduce the following:
\newcommand{\can}{\on{can}}
\begin{nota}
    Let $\B$ be a weakly $2$-presentably symmetric monoidal $2$-category, and let $b\in\B^\dbl$. For $i,j\in \mathbb N$, we let $\can_{i,j}: b^{\otimes i}\otimes (b^\vee)^{\otimes j}\to \Frrig_\B(b)$ denote the oplax cocone map from $\Cob_\B(b)(i^+\coprod j^-)\simeq b^{\otimes i}\otimes (b^\vee)^{\otimes j}$ to $\colim^\oplax_\Cob\Cob_\B(b) = \Frrig_\B(b)$. 
\end{nota}

We also need to describe a certain module structure. 
\begin{cons}\label{cons:module}
    Let $\B$ be a $2$-category and $f: a\to b$ be a map in $\B$ admitting a right adjoint. In this case, $f^R f\in\End_\B(a)$ is an algebra - in fact, it is the endomorphism algebra of $f\in\Hom_\B(a,b)$ under the action of $\End_\B(a)$ on the latter by precomposition. 

    Furthermore, $f^Rf$ is also the endomorphism ring of $f^R$ in $\Hom_\B(b,a)$ (where now the action of $\End_\B(a)$ is by postcomposition) and hence it acts on $f^R$, and therefore also on $f^Rg$ for each $g:c\to b$.   
\end{cons}
\begin{rmk}
    By the Eckmann-Hilton argument, if $\B$ is a symmetric monoidal $2$-category and $f$ is the unit map $\eta: \one_\B\to A$ if some commutative algebra, the algebra structure on $\eta^R\eta$ described above agrees with the one from \Cref{defn:End1}. 
\end{rmk}

\begin{cor}\label{cor:mapformula2}
    Let $\B$ be wealy $2$-presentably symmetric monoidal, and $b\in\B^\dbl$. Fix $i,j \in \N$ and consider the composite $b^{\otimes i}\otimes (b^\vee)^{\otimes j}\xrightarrow{\can_{i,j}} \Frrig_\B(b) \xrightarrow{\eta^R} \one_\B$. 

    As a module over $\End(\one_{\Frrig(b)})$, cf. \Cref{cons:module}, it is a coproduct indexed by the set of partitions of bijections $\sigma: i\to j$, where for each bijection $\sigma$, we take the free $\End(\one_{\Frrig(b)})$-module on $\ev_\sigma: b^{\otimes i}\otimes (b^\vee)^{\otimes j}\to \one_\B$, where $\ev_\sigma$ is the tensor product indexed by $i$ of the evaluation pairing $b\otimes b^\vee \to \one_\B$, pairing the $i$th copy of $b$ with the $\sigma(i)$th copy of $b^\vee$. 
\end{cor}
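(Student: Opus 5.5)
We use \Cref{cor:mapformula} with the object $\emptyset$ playing the role of $j$ there, and the object $i^+\coprod j^-$ the role of $i$. Since the unit of $\Frrig_\B(b)$ is the cocone map $\iota_\emptyset=\can_{0,0}$, we get
$$\eta^R\circ \can_{i,j}=\iota_\emptyset^R\iota_{i^+\coprod j^-}\simeq \colim_{f\in\Map_\Cob(i^+\coprod j^-,\emptyset)} \Cob_\B(b)(f).$$
So the first step is to describe the mapping anima $\Map_\Cob(i^+\coprod j^-,\emptyset)$ together with its action by $\Map_\Cob(\emptyset,\emptyset)$ (postcomposition, that is, disjoint union with closed $1$-manifolds). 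The standard description of $1$-dimensional cobordisms gives the following. A cobordism from $i^+\coprod j^-$ to $\emptyset$ is a disjoint union of two parts. The first part consists of arcs pairing each positive point with a negative point; this forces $i=j$ (otherwise the anima is empty and the claim is vacuous) and is recorded by a bijection $\sigma$. The second part is a closed $1$-manifold. Concretely, there is an equivalence of $\Map_\Cob(\emptyset,\emptyset)$-anima
$$\Map_\Cob(i^+\coprod j^-,\emptyset)\simeq \coprod_{\sigma: i\xrightarrow{\sim} j} \Map_\Cob(\emptyset,\emptyset),$$
where the $\sigma$-component is the free orbit generated by the elbow cobordism $e_\sigma$. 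The action is free because the arcs carry no automorphisms up to isotopy, as intervals with fixed endpoints have contractible diffeomorphism groups. I would justify this by citing the standard computation in \cite{harpazcob}, or by the surgery description of $\Cob$.

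Next, I would split the colimit along this coproduct decomposition. Write each $f$ in the $\sigma$-component as $g\sqcup e_\sigma$ with $g\in\Map_\Cob(\emptyset,\emptyset)$. Symmetric monoidality of $\Cob_\B(b)$ gives $\Cob_\B(b)(g\sqcup e_\sigma)\simeq \Cob_\B(b)(g)\otimes \ev_\sigma$, using $\Cob_\B(b)(e_\sigma)=\ev_\sigma$. Condition (3) of \Cref{defn:weak2prl} lets the colimit pass through $\otimes$, so the $\sigma$-summand becomes
$$\Big(\colim_{g\in\Map_\Cob(\emptyset,\emptyset)}\Cob_\B(b)(g)\Big)\otimes \ev_\sigma\simeq \End(\one_{\Frrig(b)})\otimes\ev_\sigma.$$
Here we again use \Cref{cor:mapformula} and the definition $\End(\one_{\Frrig(b)})=\iota_\emptyset^R\iota_\emptyset$. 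Since $\one_\B$ is the unit, tensoring with the $2$-cell $\ev_\sigma$ into $\one_\B$ is the same as the action on $\Hom_\B(b^{\otimes i}\otimes(b^\vee)^{\otimes j},\one_\B)$ by postcomposition/tensoring with $\End(\one_\B)$. This is exactly the free module on $\ev_\sigma$.

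The main obstacle is checking that this identification respects module structures. The module structure of \Cref{cons:module} comes from $\eta^R\eta$ acting on $\eta^R$, while the structure we produce comes from disjoint union in $\Cob$. To match them, I would reduce to the universal case $\B=\Fun(\Cob\op,\PrL)$ with $b=Y(+)$, as in the proof of \Cref{thm:maintext}. There, by \Cref{cor:mapformula}, both sides are computed explicitly in presheaf categories on the relevant mapping anima. The action of $\iota_\emptyset^R\iota_\emptyset$ on $\iota_\emptyset^R\iota_x$ is induced by the composition map $\Map(\emptyset,\emptyset)\times\Map(x,\emptyset)\to\Map(x,\emptyset)$, because composition of oplax cocone maps is given by left Kan extension along composition in $I$. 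Hence the module structure is the one obtained from the free-orbit decomposition above. The general case then follows because $\overline{b}$ preserves oplax colimits, right adjoints of cocone maps (by the colimit formula), and tensor products.
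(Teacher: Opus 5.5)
Your proposal is correct and follows essentially the paper's route. The underlying object comes from \Cref{cor:mapformula} together with the decomposition of $\Map_\Cob(i^+\coprod j^-,\emptyset)$ as a free $\Map_\Cob(\emptyset,\emptyset)$-anima, and the module structure is settled in the universal example $\Fun(\Cob\op,\PrL)$ and transported along $\overline{b}$. The paper finishes that step more cheaply: in the universal example $\eta^R\circ\can_{i,j}$ is terminal in its $\Hom$-category by \Cref{cor:mapformula}, so it carries a unique $\End(\one_{\Frrig(b)})$-module structure, which spares you from coherently identifying the action with composition in $\Cob$.
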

In particular, if $i\neq j$, there are no such bijections and the composite in question is the initial object in $\Hom_\B(b^{\otimes i}\otimes (b^\vee)^{\otimes j},\one_\B)$.
\begin{proof}
    That the underlying object is correct can be computed using the colimit formula from \Cref{cor:mapformula} and the analogous fact for mapping anima in the cobordism category $\Cob$.  

    To identify the module structure, we again look at the universal example $\Fun(\Cob\op,\PrL)$, where $\eta^R\circ \can_{i,j}$ is terminal in $\Hom_\B(b^{\otimes i}\otimes (b^\vee)^{\otimes j},\one_\B)$ by \Cref{cor:mapformula}, so that there, there is a unique module structure, thus proving the claim. 
\end{proof}
\begin{rmk}
    We stated this corollary for the specific pair of objects $i^+\coprod j^-$ and $\emptyset$ of $\Cob$, but of course there is a more general statement involving cobordisms $i_0^+\coprod j_0^-\to i_1^+\coprod j_1^-$. The above simply seems more manageable and the module structure is easier to define (though once it is defined, the description is as easy as in this case, for the same reason!). This is sufficient for most purposes. 
\end{rmk}
We now extract some concrete corollaries when $\B=\Mod_\V(\PrL)$ for some $\V\in\CAlg(\PrL)$. In this context, we let $\HH(-/\V)$ denote $\Dim$, cf. \cite[Section 4.5]{HSS}.

\begin{cor}\label{cor:maintextV}
    Let $\V\in\CAlg(\PrL)$ and $\M$ a dualizable $\V$-module. The free rigid commutative $\V$-algebra $\Frrig_\V(\M)$ on $\M$ exists, and has $\one_\V\{\HH(\M/\V)_{hS^1}\}$ as the endomorphism algebra of objects. 

    Furthermore, if $m_1,...,m_i\in \M$ are objects and $n_1,...,n_j\in\M$ are $\V$-atomic objects in the sense of \cite[Definition 1.22]{Dbl}, then we have, in $\V$, an equivalence of $\End(\one_{\Frrig_\V(\M)})$-modules: $$\End(\one_{\Frrig_\V(\M)})\otimes \bigoplus_{\sigma: i\cong j}\bigotimes_i\hom_\M(n_{\sigma(i)}, m_i)\simeq \hom_{\Frrig_\V(\M)}(\one, \bigotimes_i m_i\otimes \bigotimes_j n_j^\vee)$$
\end{cor}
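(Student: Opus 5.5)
We specialize the general results to $\B=\Mod_\V(\PrL)$, which is weakly $2$-presentably symmetric monoidal, with unit $\one_\B=\V$ and $\End(\one_\B)=\Fun^L_\V(\V,\V)\simeq\V$. Existence of $\Frrig_\V(\M)$ is then \Cref{thm:maintext}. For the endomorphism algebra of the unit, \Cref{cor:End1} says $\End(\one_{\Frrig_\V(\M)})$ is the free commutative algebra in $\V$ on $\Dim(\M)_{hS^1}$. Here $\Dim(\M)=\HH(\M/\V)$ is an endomorphism of $\V$, i.e. an object of $\V$, with its $S^1$-action. Under the identification $\End(\one_\B)\simeq\V$, the free commutative algebra on an object $X$ is $\coprod_n (X^{\otimes n})_{h\Sigma_n}$, which is what $\one_\V\{X\}$ denotes. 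This gives the first claim.

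For the mapping object formula, we apply \Cref{cor:mapformula2} with $b=\M$. It gives an equivalence of $\End(\one_{\Frrig_\V(\M)})$-modules
$$\eta^R\circ\can_{i,j}\simeq \End(\one_{\Frrig_\V(\M)})\otimes\bigoplus_{\sigma:i\cong j}\ev_\sigma$$
in $\Fun^L_\V(\M^{\otimes i}\otimes(\M^\vee)^{\otimes j},\V)$. The plan is then to evaluate both sides on the object $\bigotimes_i m_i\otimes\bigotimes_j n_j^\vee$. For this we need three identifications.

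First, for a $\V$-atomic $n\in\M$, there is an object $n^\vee\in\M^\vee$ such that $\ev(m\otimes n^\vee)\simeq\hom_\M(n,m)$. Indeed, $\hom_\M(n,-)$ is a colimit-preserving $\V$-linear functor $\M\to\V$, hence a point of $\M^\vee=\Fun^L_\V(\M,\V)$. Then $\ev_\sigma$ evaluated on the tensor is $\bigotimes_i\hom_\M(n_{\sigma(i)},m_i)$. Evaluation at an object is $\V$-linear and colimit-preserving, so it commutes with the free-module construction and the coproduct.

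Second, the tensor $\bigotimes_i m_i\otimes\bigotimes_j n_j^\vee$ in $\Frrig_\V(\M)$ should be the image of the corresponding object under $\can_{i,j}$. This is because $\can$ is compatible with the lax monoidal structure, coming from the algebra structure in \Cref{cons:colim}, which is strong on the cocone maps from the strong monoidal $\Cob_\B(\M)$.

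Third, $\eta^R$ evaluated on an object $x$ is $\hom_{\Frrig_\V(\M)}(\one,x)$, since $\eta:\V\to\Frrig_\V(\M)$ is $v\mapsto v\otimes\one$ and its right adjoint is the $\V$-enriched hom out of $\one$. Combining these three identifications yields the displayed equivalence, together with its module structure.

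The main obstacle is the second identification: making sure that $\can_{i,j}$ applied to an elementary tensor is the tensor in $\Frrig_\V(\M)$ of the images of the $m_i$ and $n_j^\vee$, rather than merely admitting a comparison map, and that the object of $\M^\vee$ is correctly identified with the dual of $n_j$ in $\Frrig_\V(\M)$. I would first argue this via the monoidality of the oplax cocone at $+$ and $-$. Then I would use that $\can_{0,1}$ applied to an atomic object is dual to $\can_{1,0}(n)$, checking this in the universal case $\Fun(\Cob\op,\PrL)$, where the relevant maps are unique.
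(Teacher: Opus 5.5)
Your proposal is correct and follows the paper's proof essentially step for step. You set $N_k=\hom_\M(n_k,-)\in\M^\vee$, use $\iota^\vee(N_k)\simeq\iota(n_k)^\vee$ and the monoidality of the cocone to rewrite the hom as $\eta^R\circ\mathrm{can}_{i,j}$ evaluated at $\bigboxtimes_i m_i\boxtimes\bigboxtimes_j N_j$, and conclude with \Cref{cor:mapformula2} because the pairing $\M\otimes\M^\vee\to\V$ is evaluation; the first part is \Cref{thm:maintext} and \Cref{cor:End1} applied in $\Mod_\V(\PrL)$.

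One caution concerns the duality step you flag, which the paper only calls easy. As phrased, your universal check does not apply. In $\Fun(\Cob\op,\PrL)$ the only map $Y(\emptyset)\to Y(+)$ is the zero map, since $\Map_\Cob(\emptyset,+)=\emptyset$, and this map is not a left adjoint. So $Y(+)$ has no atomic objects to test on. The universal check should instead be made independent of $n$, e.g.\ $\mathrm{can}_{0,1}\simeq(\mathrm{can}_{1,0}^R)^\vee$ under the self-duality of \Cref{lm:dualrig}; both sides are terminal in the universal case. You then combine this with the fact that an atomic $y\colon\V\to A$ in a rigid $A$ has dual $(y^R)^\vee$, applied to $y=\mathrm{can}_{1,0}(n)$.
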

\begin{proof}
For each $k$, let $N_k\in\M^\vee = \Fun^L_\V(\M,\V)$ denote $\hom_\M(n_k,-)$, which is well-defined since $n_k$ is atomic. 

Letting $\iota:\M\to \Frrig_\V(\M)$ denote the canonical inclusion and $\iota^\vee: \M^\vee\to \Frrig_\V(\M)$ also, we verify easily that $\iota^\vee(N_k)\simeq \iota(n_k)^\vee$ for all $k$. Hence $$\hom_{\Frrig_\V(\M)}(\one, \bigotimes_i m_i\otimes\bigotimes_j n_j^\vee)\simeq \hom_{\Frrig_\V(\M)}(\eta\one_\V, \can_{i,j}(\bigboxtimes_i m_i\boxtimes \bigboxtimes_j N_j))\simeq \eta^R\can_{i,j}(\bigboxtimes_i m_i\boxtimes \bigboxtimes_j N_j).$$
The description in terms of the left hand side now follows from \Cref{cor:mapformula2} and the fact that the pairing $\M\otimes\M^\vee\to \V$ is simply evaluation, and so sends $m_p\boxtimes N_q$ to $\hom_\M(n_q,m_p)$. 
\end{proof}

\begin{rmk}\label{rmk:moregenform}
    We extracted from our main theorem a concrete calculation of certain homs in $\Frrig_\V(\M)$, but the description as an oplax colimit is of course more precise. For example, this description is valid also when $\M$ is dualizable and has, potentially no nontrivial atomics. One way to see this in action is to plug in directly $N_k$'s in $\M^\vee$ as opposed to specifically those of the form $\hom_\M(n_k,-)$ for $n_k\in \M$ atomic. 
\end{rmk}

\begin{rmk}
    We point out, and this is relevant to \cite{KNS}, that \Cref{cor:End1} and \Cref{cor:mapformula} also show that in general, if $\M\to \mathcal{N}$ is fully faithful, this need not be the case for $\Frrig_\V(\M)\to \Frrig_\V(\mathcal{N})$. This is in contrast to, for example the case of free commutative algebras. The main obstruction is the endomorphism algebra of the unit, and as we explain below, this is the only obstruction.
\end{rmk}
\renewcommand{\N}{\mathcal{N}}
\begin{cor}\label{cor:ff}
Let $\V\in\CAlg(\PrL)$ and let $i:\M\to \N$ be a fully faithful internal left adjoint between dualizable $\V$-modules. 

The map $\Frrig_\V(\M)\otimes_{\End(\one_{\Frrig_\V(\M)})}\End(\one_{\Frrig_\V(\N)})\to \Frrig_\V(\N)$ is fully faithful. 

In particular, if furthermore $\HH(\M/\V)\to \HH(\N/\V)$ is an equivalence, then $$\Frrig_\V(\M)\to \Frrig_\V(\N)$$ is fully faithful. 
\end{cor}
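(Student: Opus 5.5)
We write $E_\M:=\End(\one_{\Frrig_\V(\M)})$, $E_\N:=\End(\one_{\Frrig_\V(\N)})$, $A:=\Frrig_\V(\M)\otimes_{E_\M}E_\N$ and $\phi: A\to \Frrig_\V(\N)$ for the map in question.

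The plan is to reduce to \Cref{cor:rigff2}, i.e.\ to checking full faithfulness ``out of the unit''. This requires two preliminary facts. First, $A$ is rigid. Here $E_\M\to E_\N$ is a map of commutative algebras in $\V$, so $\Mod_{E_\N}(\V)$ is a commutative $\V$-algebra. The base change $-\otimes_{E_\M}E_\N$ is the basechange $\Frrig_\V(\M)\otimes_{\Mod_{E_\M}(\V)}\Mod_{E_\N}(\V)$, since $\Frrig_\V(\M)$ is linear over $\Mod_{E_\M}(\V)$ via $\eta\otimes-$. Rigidity is stable under pushouts in $\CAlg(\PrL_\V)$ along such maps: the unit and multiplication right adjoints base change, and linearity is preserved. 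Second, $\phi$ is a map in $\CAlg^\rig$, so by \Cref{lm:existadj} it has a right adjoint and \Cref{cor:rigff2} applies. It therefore suffices to show that $\eta_A^R\to \eta_{\Frrig(\N)}^R\circ\phi$ is an equivalence in $\Hom(A,\V)$.

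Next I reduce to generators. By \Cref{prop:incladj}, $\Frrig_\V(\M)$ is the oplax colimit of the $\can_{i,j}$. Hence $A$ is generated, as an $E_\N$-linear left-adjoint-target, by the maps $\can_{i,j}\otimes E_\N: \M^{\otimes i}\otimes(\M^\vee)^{\otimes j}\otimes \Mod_{E_\N}(\V)\to A$. More precisely, maps out of $A$ in $\Hom(A,\V)$ are detected jointly by precomposition with the $\can_{i,j}$, extended $E_\N$-linearly. Precomposition with $\can_{i,j}$ is conservative jointly, and both sides are $E_\N$-linear. Precomposing with $\can_{i,j}$, the claim becomes that
\[E_\N\otimes_{E_\M}(\eta_\M^R\can^\M_{i,j})\to \eta_\N^R\can^\N_{i,j}\circ (i^{\otimes i}\otimes (i^{R,\vee})^{\otimes j})\]
is an equivalence of functors $\M^{\otimes i}\otimes(\M^\vee)^{\otimes j}\to \Mod_{E_\N}(\V)$. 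Here the map $\M^\vee\to\N^\vee$ is $(i^R)^\vee$, using that $i$ is an internal left adjoint, and compatibility of $\Cob_\B$ with $i$ comes from \Cref{lm:oplaxdbl}.

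Now apply \Cref{cor:mapformula2} to both sides. The left side is $\bigoplus_{\sigma:i\cong j}E_\N\otimes\ev^\M_\sigma$ and the right side is $\bigoplus_\sigma E_\N\otimes \ev^\N_\sigma\circ(i\otimes (i^R)^\vee)$. Since $\ev^\N\circ (i\otimes (i^R)^\vee)\simeq \ev^\M\circ(i^Ri\otimes\id)\simeq\ev^\M$ by full faithfulness of $i$, the summands match. The first claim then follows once one checks that the comparison map is the one induced on these decompositions. The second claim follows from the first together with \Cref{cor:End1}: if $\HH(\M/\V)\simeq\HH(\N/\V)$, then $E_\M\to E_\N$ is an equivalence, since both are free commutative algebras on $\HH_{hS^1}$, functorially.

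The main obstacle is identifying the comparison map itself, and not just abstractly matching the two sides. I plan to do this by universality. Consider the universal $\B=\Fun(\Cob\op,\PrL)$-type situation with a pair of dualizable objects and a fully faithful left adjoint between them. There both sides are terminal after tensoring, as in the proof of \Cref{cor:mapformula2}, so any map between them is an equivalence. The delicate point is that full faithfulness is not a property preserved by these universal functors. So I would instead check the identification directly on each summand $\sigma$ via the cocone description in \Cref{cor:mapformula}, using naturality of the oplax cocones in the morphism $i$.
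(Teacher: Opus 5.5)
Your proposal is correct and follows essentially the paper's own argument. You reduce via \Cref{cor:rigff2} to full faithfulness out of the unit, test on the generators $\mathrm{can}_{i,j}$, identify the source side as extension of scalars along $\End(\one_{\Frrig_{\mathcal{V}}(\mathcal{M})})\to\End(\one_{\Frrig_{\mathcal{V}}(\mathcal{N})})$, and match the summands from \Cref{cor:mapformula2} using $\ev^{\mathcal{N}}\circ(i\otimes (i^R)^\vee)\simeq\ev^{\mathcal{M}}$, with the ``in particular'' part coming from \Cref{cor:End1}. You are in fact slightly more careful than the paper: you justify rigidity of the base-changed algebra, and you point out that the comparison map itself, not just the abstract objects, must be identified via naturality of the cocones, which the paper leaves implicit.
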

Here, given a commutative $\V$-algebra $\mathcal{A}$ with unit $\one_\mathcal{A}$ and a map $\End(\one_\mathcal{A})\to R$ of commutative algebras in $\V$, we let $\mathcal{A}\otimes_{\End(\one_\mathcal{A})}R$ denote the relative tensor product of commutative $\V$-algebras $\mathcal{A}\otimes_{\Mod_{\End(\one_\mathcal{A})}(\V)}\Mod_R(\V)$. 
\begin{proof}
The ``In particular'' statement follows from the first statement using \Cref{cor:End1}.

Let $A:=\Frrig_\V(\M)\otimes_{\End(\one_{\Frrig_\V(\M)})}\End(\one_{\Frrig_\V(\N)})$ and $B:=\Frrig_\V(\N)$ and let $f:A\to B$ denote the relevant functor. 

 By \Cref{cor:rigff2}, it suffices to prove that $\eta_A^R\to \eta_A^Rf^R f$ is is an equivalence, i.e. 
 it suffices to prove that $\eta_A^R\to \eta_B^R f$ is an equivalence, i.e. that $\hom_A(\one_A,-)\to \hom_B(\one_B,f(-))$ is an equivalence. Both are $\V$-linear colimit-preserving functors $A\to \V$, and the $\can_{i,j}:\M^{\otimes i}\otimes (\M^\vee)^{\otimes j}\to \Frrig_\V(\M)\to A$ jointly generate $A$ as a $\V$-module, hence, it suffices to prove that $\hom_A(\one_A,\can_{i,j})\to \hom_B(\one_B,f\circ\can_{i,j})$ is an equivalence for all $i,j$. 

Since $\one_{\Frrig_\V(\M)}$ is atomic, the map $\hom_{\Frrig_\V(\M)}(\one,\can_{i,j})\to \hom_A(\one_A,\can_{i,j})$ witnesses the target as the extension of scalars along $\End(\one_{\Frrig_\V(\M)})\to \End(\one_{\Frrig_\V(\N)})$ of the source. 

Now, since for $F\in\M^\vee$, $m\in\M$, $i^\vee(F)(i(m))\simeq F(i^Ri(m))\simeq F(m)$, \Cref{cor:mapformula2} shows that the same is true for $\hom_{\Frrig_\V(\M)}(\one,\can_{i,j})\to \hom_{\Frrig_\V(\N)}(\one,f\circ \can_{i,j})$, so that the claim follows. 
\end{proof}
\begin{rmk}
    The ``In particular'' part of the corollary can also be shown in a general weakly $2$-presentably symmetric monoidal $2$-category $\B$, essentially using the last part of the proof as is. Fully faithfulness is going to guarantee that the oplax transformation $\Cob_\B(\M)\to \Cob_\B(\N)$ of functors $\Cob\to \B$ is strict along cobordisms of the form $+\coprod - \to \emptyset$, which suffices in conjunction with \Cref{cor:mapformula2}. 

    The whole corollary is probably also valid in this generality if one makes sense of the relative tensor product over endomorphisms of the unit. 
\end{rmk}
We provide a somewhat surprising application of this result, which answers a question asked to the author by Thomas Nikolaus:
\begin{cor}\label{cor:ffrigidemb}
    Let $C\in\CAlg^\rig(\PrL_\st)$. There exists a compactly generated rigid commutative algebra $D$ with a fully faithful symmetric monoidal embedding $C\to D$.
\end{cor}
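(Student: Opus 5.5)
The idea is to compare $C$ with a free rigid algebra on a compactly generated module, using \Cref{cor:ff} to get fully faithfulness and the counit $\epsilon_C:\Frrig(C)\to C$ from \Cref{cons:cocone} to get back to $C$. Since $C$ is rigid, it is dualizable in $\PrL_\st$ by \Cref{lm:dualrig}. For dualizable $C$, Efimov's theory provides a compactly generated $\N$, namely $\N=\Ind(C^{\omega_1})$, and a fully faithful functor $\hat y: C\to \N$ left adjoint to $\colim:\N\to C$. Since $\colim$ preserves colimits, $\hat y$ is a fully faithful internal left adjoint between dualizable stable modules. I write $E_\M:=\End(\one_{\Frrig(\M)})$; by \Cref{cor:End1} this is $\mathrm{Sym}(\HH(\M)_{hS^1})$.

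Step 1. By \Cref{cor:ff}, the map $A':=\Frrig(C)\otimes_{E_C}E_\N\to \Frrig(\N)$ is a fully faithful map of rigid algebras.

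Step 2. The algebra $\Frrig(\N)$ is compactly generated. By \Cref{cor:mapformula}, the cocone maps $\can_{i,j}$ jointly generate it under colimits. By \Cref{prop:incladj} they are internal left adjoints, so they preserve compact objects. The sources $\N^{\otimes i}\otimes(\N^\vee)^{\otimes j}$ are compactly generated, since $\N^\vee$ is as well.

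Step 3. I base change along $\epsilon_C$ and set $D_0:=\Frrig(\N)\otimes_{\Frrig(C)}C$. Two general facts about a map of rigid algebras $A\to R$ are needed.
\begin{itemize}
\item Base change $-\otimes_A R$ preserves fully faithful maps of rigid algebras. By \Cref{prop:ladjunder} the right adjoint $f^R$ is $A$-linear, so the unit $\id\to f^Rf$ stays an equivalence after tensoring.
\item Base change preserves rigidity and compact generation. The base-change functor is an internal left adjoint that generates the target.
\end{itemize}
This gives a fully faithful map $C\otimes_{E_C}E_\N\simeq A'\otimes_{\Frrig(C)}C\to D_0$, with $D_0$ rigid and compactly generated.

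Step 4, the main obstacle, is to remove the extra scalars $E_\N$, i.e.\ to replace $C\otimes_{E_C}E_\N$ by $C$ itself. The plan is to arrange that $\HH(C)_{hS^1}\to\HH(\N)_{hS^1}$ is split injective with complement $W$. Then $E_\N\simeq E_C\otimes \mathrm{Sym}(W)$ as $E_C$-algebras, so $C\otimes_{E_C}E_\N\simeq C\otimes \mathrm{Sym}(W)$.

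Next I base change everything along the augmentation $\mathrm{Sym}(W)\to \Sp$ that sends $W$ to $0$. By the same base-change facts as in Step 3, this yields a rigid, compactly generated $D$ together with a fully faithful symmetric monoidal embedding $C\to D$.

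To get the splitting on $\HH$, I would first try to exhibit $C$ as a retract of $\N$ in $(\PrL_\st)^\dbl$, i.e.\ through internal left adjoints, possibly after enlarging $\N$, for instance to $\N\oplus$ something or to a Calkin-type modification. Functoriality of $\HH$ on $(\PrL_\st)^\dbl$ then makes $\HH(C)$ a retract of $\HH(\N)$.

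If that fails, there is an alternative. The requirement is only to produce some $E_C$-algebra map $E_\N\to\End(\one_C)$ compatible with $E_C\to\End(\one_C)$ from $\epsilon_C$. Because $E_\N$ is free, this amounts to extending the map $\HH(C)_{hS^1}\to\End(\one_C)$ along $\HH(C)_{hS^1}\to\HH(\N)_{hS^1}$. Base changing along such a map again recovers $C$.
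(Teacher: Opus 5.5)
\textbf{Verdict.} Your Steps 1--3 are fine. In Step 3 you should say why the image of $\Frrig(\N)$ generates the base change: it is because $\epsilon_C$ is split surjective via the cocone map at $+$, not a general feature of base change. Step 4, however, is a genuine gap, and with your choice of $\N$ it cannot be closed.

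\textbf{Why Step 4 fails for $\N=\Ind(C^{\omega_1})$.} Here $\HH(\N)\simeq\THH(C^{\omega_1})=0$ by the Eilenberg swindle, since $C^{\omega_1}$ has countable coproducts and $\THH$ is additive. So $E_\N\simeq\mathbb{S}$, and Step 3 only embeds $C\otimes_{E_C}\mathbb{S}$, not $C$. Neither of your two exits works.
\begin{itemize}
\item The map $\HH(C)_{hS^1}\to 0$ is not split injective.
\item The required $E_C$-algebra map $\mathbb{S}\to\End(\one_C)$ exists only if the spectrum map $\HH(C)_{hS^1}\to\End(\one_C)$ classifying $\End(\one_{\Frrig(C)})\to\End(\one_C)$ is null. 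It is not, for any $C\neq 0$. By naturality of $\epsilon$ along the unit $\Sp\to C$, its restriction to $\HH(\Sp)_{hS^1}\simeq\Sigma^\infty_+BS^1$ is the collapse map $\Sigma^\infty_+BS^1\to\mathbb{S}$ followed by the unit of $\End(\one_C)$. This holds because for $A=\Sp$ the cocone of \Cref{cons:cocone} is an equivalence. Precomposing with the basepoint $\mathbb{S}\to\Sigma^\infty_+BS^1$ recovers the unit $\mathbb{S}\to\End(\one_C)$, which is nonzero.
\end{itemize}

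\textbf{Why the retract repair fails.} Making $C$ a retract of a compactly generated $\N$ in $(\PrL_\st)^\dbl$ is impossible unless $C$ is already compactly generated. The retraction $r$ is an internal left adjoint, so it preserves compact objects. Writing $s(c)$ as a colimit of compacts then exhibits $c\simeq r(s(c))$ as a colimit of compacts.

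\textbf{The missing input.} You need a compactly generated $D_0$ with a fully faithful internal left adjoint $C\to D_0$ that is a motivic equivalence, in particular an equivalence on $\THH$. This is the non-formal input from Efimov's theory that the paper imports via \cite[Example 4.14]{Dbl}. With that choice:
\begin{itemize}
\item $E_C\simeq E_{D_0}$, so your complement $W$ is $0$ and Step 4 disappears.
\item The second part of \Cref{cor:ff} gives directly that $\Frrig(C)\to\Frrig(D_0)$ is fully faithful.
\item Your Step 3, base change along $\epsilon_C$, then finishes the proof with $D=C\otimes_{\Frrig(C)}\Frrig(D_0)$.
\end{itemize}
This is exactly the paper's argument.
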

\begin{rmk}
    It is a result of Lurie's that any dualizable presentable stable category embeds fully faithfully and in an internally left adjoint way into a compactly generated category. This is in some sense a multiplicative analogue of this result. 
\end{rmk}
\begin{proof}
    We let $\Frrig_\st:=\Frrig_\Sp$. 

    By \cite[Example 4.14]{Dbl}, there exists a compactly generated $D_0$ and a fully faithful embedding $C\to D_0$ which is a motivic equivalence in the sense of \cite{RSW}, and in particular induces an equivalence on $\THH := \HH(-/\Sp)$.
    
   We now consider the following relative tensor product: $D:=C\otimes_{\Frrig_\st(C)}\Frrig_\st(D_0)$. There is clearly a map $C\to D$ in $\CAlg^\rig(\PrL_\st)$. Furthermore, \Cref{cor:ff} shows that $\Frrig_\st(C)\to\Frrig_\st(D_0)$ is fully faithful, hence so is $C\to D$. 

    Finally, $\Frrig_\st(C)\to C$ is split surjective so that the image of $\Frrig_\st(D_0)$ in $D$ generates $D$ under colimits, and $\Frrig_\st(D_0)$ is compactly generated, therefore so is $D$ (note that the map $\Frrig_\st(D_0)\to D$ is a map between rigid commutative algebras and hence is an internal left adjoint, and thus preserves compacts).
\end{proof}
\begin{ques}
    Let $C=\mathrm{Sh}(X,\Sp)$ for $X$ a compact Hausdorff topological space. Is there a natural/geometric candidate of a $D$ as in the corollary ?
\end{ques}
Another consequence we find is that any dualizable category embeds fully faithfully in a rigid one, at least in the pointed setting:
\begin{cor}
    Let $\V\in\CAlg(\PrL_*)$, and $\M$ a dualizable $\V$-module. There exists a rigid commutative $\V$-algebra $\W$ and a fully faithful $\V$-internal left adjoint $\M\to \W$.  
\end{cor}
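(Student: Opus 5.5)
The natural candidate is $\W:=\Frrig_\V(\M)$, with the canonical map $\iota=\can_{1,0}:\M\to\Frrig_\V(\M)$. By \Cref{prop:incladj} (or \Cref{thm:consisrig} together with the fact that maps of rigid algebras are internal left adjoints), $\iota$ is a $\V$-internal left adjoint. So the plan is to show that $\iota$ is fully faithful, i.e. that the unit $\id_\M\to\iota^R\iota$ is an equivalence in $\Hom_\B(\M,\M)$.

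The first step is \Cref{cor:mapformula}, which identifies $\iota^R\iota$, where $\iota=\iota_{+}$, with
\[
\colim_{f\in\Map_\Cob(+,+)}\Cob_\B(\M)(f).
\]
So I need $\Map_\Cob(+,+)$. A cobordism $+\to+$ is an interval from the incoming $+$ to the outgoing $+$, together with a disjoint union of closed circles. Hence
\[
\Map_\Cob(+,+)\simeq\Map_\Cob(\emptyset,\emptyset)\simeq\coprod_n ((BS^1)^{\times n})_{h\Sigma_n}.
\]
Exactly as in the proof of \Cref{cor:End1}, the colimit becomes
\[
\iota^R\iota\simeq \id_\M\otimes \End(\one_{\Frrig_\V(\M)})\simeq \id_\M\otimes \one_\V\{\HH(\M/\V)_{hS^1}\},
\]
the action of $\End(\one_\V)$-objects on $\End(\M)$. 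The unit map is the inclusion of the summand $n=0$, i.e. $\id_\M\otimes(\one\to \one\{\HH_{hS^1}\})$. So $\iota$ is fully faithful exactly when the unit $\one_\V\to\one_\V\{\HH(\M/\V)_{hS^1}\}$ becomes an equivalence after acting on $\id_\M$. In general it is not, since dimensions contribute.

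The second step is to kill these terms, and this is where pointedness enters. I would replace $\W$ by the base change
\[
\W:=\Frrig_\V(\M)\otimes_{\End(\one_{\Frrig_\V(\M)})}\one_\V,
\]
along the augmentation $\one_\V\{\HH_{hS^1}\}\to\one_\V$. This augmentation is the free commutative algebra map sending the generator $\HH_{hS^1}$ to $0$, and it exists precisely because $\V$ is pointed, so there is a zero map $\HH(\M/\V)_{hS^1}\to\one_\V$. The relative tensor product is as defined after \Cref{cor:ff}. I then need:
\begin{enumerate}
\item $\W$ is still rigid. This should follow because it is a pushout of rigid algebras: $\Mod_R(\V)$ is rigid for $R$ a commutative algebra in $\V$, and rigid algebras are closed under relative tensor products.
\item The composite $\M\to\W$ is still an internal left adjoint. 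This holds as a composite of internal left adjoints.
\item The composite $\M\to\W$ is fully faithful.
\end{enumerate}

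Item 3 is the main step. Since $\one_{\Frrig_\V(\M)}$ is atomic, as used in the proof of \Cref{cor:ff}, mapping objects in $\W$ out of objects of the form $\can_{i,j}$ are obtained by extending scalars along $\End(\one)\to\one_\V$. By the linearity in \Cref{cons:module} and \Cref{prop:ladjunder}, the composite $\iota_\W^R\iota_\W$ should therefore be $\iota^R\iota\otimes_{\End(\one)}\one_\V\simeq\id_\M$, with the unit map identified with the identity.

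The main obstacle I expect is making this base-change identification rigorous for $\Hom(\M,\M)$ rather than for the mapping objects $\hom(\one,-)$. I would handle it by recognising $\iota^R\iota$ as a free $\End(\one)$-module on $\id_\M$, in the sense of \Cref{cons:module}. As in \Cref{cor:mapformula2}, this can be checked in the universal example $\Fun(\Cob\op,\PrL)$, where everything is terminal and the module structure is therefore unique.
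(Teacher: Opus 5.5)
Your $\W$ is exactly the paper's. It is $\W=\Frrig_\V(\M)\otimes_R\one_\V$, with $R:=\End(\one_{\Frrig_\V(\M)})\simeq\one_\V\{\HH(\M/\V)_{hS^1}\}$ augmented via the zero map, which is where pointedness enters. The only difference is how you verify that $\M\to\W$ is fully faithful.

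\textbf{Your route.} You compute the unit of the adjunction itself, using \Cref{cor:mapformula} and $\Map_\Cob(+,+)\simeq\Map_\Cob(\emptyset,\emptyset)$. This makes the mechanism transparent: the unit is the $n=0$ summand of $R\otimes\id_\M$. The cost is that you must base change inside $\Hom_\B(\M,\M)$.

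\textbf{The paper's route.} The paper uses \Cref{cor:maintextV} together with \Cref{rmk:moregenform}, namely $\hom(\one,\iota(m)\otimes\iota^\vee(N))\simeq R\otimes N(m)$ for $m\in\M$ and $N\in\M^\vee$. These are mapping objects out of the atomic unit, so base change along $R\to\one_\V$ is immediate, exactly as in the proof of \Cref{cor:ff}. Full faithfulness then follows from nondegeneracy of $\M\otimes\M^\vee\to\V$, once this pairing is related to $\iota_\W^R$ through the self-duality of \Cref{lm:dualrig}. So the paper's route sidesteps precisely the obstacle you flagged.

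\textbf{Completing your route.} \Cref{prop:ladjunder} does not apply to $\iota$, which is only $\V$-linear. So the identification $\iota_\W^R\iota_\W\simeq\iota^R\iota\otimes_R\one_\V$ still needs an argument; freeness of $\iota^R\iota$ alone does not give it. A clean argument goes as follows.
\begin{enumerate}
\item Let $\tilde\iota\colon\Mod_R(\V)\otimes_\V\M\to\Frrig_\V(\M)$ be the $\Mod_R(\V)$-linear extension of $\iota$. It is an internal left adjoint, so by \Cref{prop:ladjunder} it is an adjunction of $\Mod_R(\V)$-modules.
\item $\tilde\iota$ is fully faithful exactly when the map $R\otimes\id_\M\to\iota^R\iota$ adjoint to the unit is an equivalence. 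This is your freeness claim.
\item Freeness does hold in the universal example. There both sides are terminal, because $c\mapsto c\sqcup\id_+$ is an equivalence $\Map_\Cob(\emptyset,\emptyset)\simeq\Map_\Cob(+,+)$.
\item Apply the $2$-functor $-\otimes_{\Mod_R(\V)}\V$. It preserves adjunctions and invertible units, and it turns $\tilde\iota$ into $\M\to\W$, which is therefore fully faithful.
\end{enumerate}
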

\begin{proof}
    Consider the map $\End(\one_{\Frrig_\V(\M)})\to \one_\V$ induced by the zero map $\HH(\M/\V)_{hS^1}\to \one_\V$ using \Cref{cor:maintextV}. From \Cref{cor:maintextV} (and \Cref{rmk:moregenform}), we find that the composite $$\M\to\Frrig_\V(\M)\to \Frrig_\V(\M)\otimes_{\End(\one_{\Frrig_\V(\M)})}\one_\V$$ is fully faithful. 
\end{proof}

\begin{rmk}
    Another consequence of our main result is the ``motivic invariance'' of $\Frrig_\V$, in the sense that it sends $\V$-motivic equivalences (in the sense of \cite[Appendix B]{RSW}) to $\V$-motivic equivalences. We shall not prove this here, but return to it in future work.
\end{rmk}
\section{Consequences for locally rigid algebras}\label{section:locrig}
A consequence of our main theorem is the adjointability property we state below. The goal of this section is to extract consequences of this adjointability property for the theory of locally rigid commutative algebras in the sense of \cite{LocRig} -- consequences which we either already proved with difficult arguments, or which we were unable to prove in this generality. 
\begin{cor}\label{cor:adjtable}
    Let $f:\B_0\to \B_1$ be a $2$-colimit-preserving, locally colimit-preserving symmetric monoidal $2$-functor between weakly $2$-presentably symmetric monoidal $2$-categories. 

    The following commutative diagram is vertically left adjointable:
    \[\begin{tikzcd}
	{\CAlg^\rig(\B_0)} & {\CAlg^\rig(\B_1)} \\
	{\B_0^\dbl} & {\B_1^\dbl}
	\arrow[from=1-1, to=1-2]
	\arrow[from=1-1, to=2-1]
	\arrow[from=1-2, to=2-2]
	\arrow[from=2-1, to=2-2]
\end{tikzcd}\]
\end{cor}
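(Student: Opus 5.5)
By \Cref{thm:maintext}, the vertical left adjoints exist and are $\Frrig_{\B_0}$ and $\Frrig_{\B_1}$. Vertical left adjointability therefore means the following. For $x\in\B_0^\dbl$, the Beck--Chevalley map
\[\Frrig_{\B_1}(f(x))\to f(\Frrig_{\B_0}(x)),\]
obtained by adjunction from $f(\eta_x): f(x)\to f(U\Frrig_{\B_0}(x))=U f(\Frrig_{\B_0}(x))$, should be an equivalence. Two preliminary remarks make the square meaningful. First, $f$ is a symmetric monoidal $2$-functor, so it preserves dualizable objects, left adjoints, and hence rigid commutative algebras; the square therefore exists and commutes. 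Second, by \Cref{lm:oplaxdbl} we have $f\circ\Cob_{\B_0}(x)\simeq \Cob_{\B_1}(f(x))$, since both are symmetric monoidal functors $\Cob\to\B_1$ sending $+$ to $f(x)$.

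The plan is to show that $f$ commutes with the explicit model $F^\rig$ of \Cref{cons:Frig}. I will argue in three steps.

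First, $f$ preserves oplax colimits. An oplax colimit over $I$ in a $2$-cocomplete $2$-category is a weighted colimit, built from tensors with categories (the lax slices $I_{i/}$) and ordinary colimits. Since $f$ preserves $2$-colimits, meaning colimits and $\Cat$-tensors, we get $f(\colim^\oplax_\Cob F)\simeq \colim^\oplax_\Cob (f\circ F)$. Concretely, $\colim^\oplax_I F$ is the coend of $F$ against the weight $i\mapsto (I_{i/})\op$, and $f$ preserves such coends.

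Second, this equivalence is compatible with the commutative algebra structures of \Cref{cons:colim} and with the canonical maps $\iota_+$. The algebra structure is produced naturally from the symmetric monoidal structure and the lax monoidal structure of the diagram, as sketched after \Cref{cons:colim}. The construction in \Cref{app:construction} is natural in symmetric monoidal $2$-functors preserving the relevant oplax colimits, so $f$ intertwines the left adjoints $\colim^\oplax_\Cob$ of the diagonal functors. Combining this with $f\circ\Cob_{\B_0}(x)\simeq\Cob_{\B_1}(f(x))$ gives $f(F^\rig_{\B_0}(x))\simeq F^\rig_{\B_1}(f(x))$, compatibly with the units $\eta$.

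Third, I identify this equivalence with the Beck--Chevalley map. Under the identification, $f(\eta_x)$ corresponds to $\eta_{f(x)}$, and the Beck--Chevalley map is the unique algebra map extending $\eta_{f(x)}$. It is therefore the equivalence just constructed.

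The main obstacle is the second step: making the naturality of \Cref{cons:colim} in $\B$ precise at the level of algebras and not just underlying objects. If that is delicate, I would fall back on the universal-case trick from the proof of \Cref{thm:maintext}. Any $x$ is the image of $Y(+)$ under $\overline{x}:\Fun(\Cob\op,\PrL)\to\B_0$, and the naturality already used there for $\overline{x}$ applies equally to $f\circ\overline{x}=\overline{f(x)}$. This gives $f(\Frrig_{\B_0}(x))\simeq f\overline{x}(\Frrig(Y(+)))\simeq \overline{f(x)}(\Frrig(Y(+)))\simeq\Frrig_{\B_1}(f(x))$.

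Alternatively, one can check that the Beck--Chevalley map is an equivalence using \Cref{cor:rigff}, after reducing to the universal case. Its underlying map is the comparison of oplax colimits, which is an equivalence by the first step. Hence it is both fully faithful and essentially surjective, and so an equivalence.
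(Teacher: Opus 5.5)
Your proposal is correct and follows essentially the paper's argument. The paper also shows that $f$ commutes with oplax colimits and, by naturality in $\B$, with the algebra structure, unit and counit of \Cref{cons:Frig} and \Cref{cons:cocone}, and then concludes adjointability. The one difference is that the paper phrases this through the natural retraction of $F^\rig_\B$ onto the left adjoint from \Cref{cor:exist}, so it does not need the full strength of \Cref{thm:maintext}; you instead use \Cref{thm:maintext} to identify the left adjoint with $F^\rig_\B$ directly, which is equally valid.
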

\begin{proof}
    This follows from the fact that $f$ commutes with oplax colimits, and the naturality in $\B$ of the retraction from \Cref{cor:exist}. 
\end{proof}

That this Corollary is not obvious was explained in \Cref{rmk:adjt}. We now intend to derive consequences from it in the context\footnote{They can be drawn more generally, but the author does not yet know the optimal generality, and we will therefore restrict to this setting for now.} of $\B=\Mod_\V(\PrL)$. The first one is the following version of \cite[Theorem C]{LocRig}, which was proved with very different means in \textit{loc. cit.}:
\begin{cor}
    Let $\V\in\CAlg(\PrL)$ and let $\W$ be a locally rigid commutative $\V$-algebra in the sense of \cite{LocRig}. There exists a rigid commutative $\V$-algebra $\overline{\W}$ and a commutative $\V$-algebra map $\overline{\W}\to \W$ admitting a fully faithful $\V$-linear left adjoint (and hence $\overline{\W}$-linear as well). 

    In fact, one can choose $\overline{\W}$ to be the $\V$-rigidification of $\W$. 
\end{cor}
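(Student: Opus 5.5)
The plan is to use the adjointability of \Cref{cor:adjtable}, applied to the change-of-enrichment functor $f=\W\otimes_\V -:\Mod_\V(\PrL)\to\Mod_\W(\PrL)$. Recall from \cite{LocRig} that $\W$ is locally rigid if it is dualizable as a $\V$-module and the multiplication $\W\otimes_\V\W\to\W$ is a $\W$-linear internal left adjoint. The unit $\V\to\W$ need not be an internal left adjoint. The $\V$-rigidification should be $\overline{\W}:=L^\rig_\V(\W)$, built from the dualizable $\V$-module $\W$. The map $\overline{\W}\to\W$ would come from a cocone, as in \Cref{cons:cocone}. The difference here is that the cone map $\W\to\W$ in $\Mod_\W(\PrL)$, coming from multiplication, is a $\W$-linear left adjoint, even though $\V\to\W$ need not be a left adjoint in $\Mod_\V(\PrL)$.

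\textbf{Constructing the comparison map.} The multiplication $\W\otimes_\V\W\to\W$ is a morphism in $\Mod_\W(\PrL)^\dbl$ from $f(\W)$ to the unit. By \Cref{lm:oplaxdbl} over $\Mod_\W(\PrL)$, this gives a symmetric monoidal oplax transformation $\Cob_{\Mod_\W(\PrL)}(f\W)\to\cst(\W)$. By adjunction it becomes a lax symmetric monoidal oplax cocone $\Cob_{\Mod_\V(\PrL)}(\W)\to\cst(\W)$ in $\Mod_\V(\PrL)$. \Cref{cons:colim} then produces a commutative $\V$-algebra map $p:\Frrig_\V(\W)\to\W$. This construction does not need $\W$ to be rigid. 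Precomposing with the inclusion from the retraction of \Cref{cor:exist} (which equals $\Frrig_\V$ by \Cref{thm:maintext}) gives the desired map.

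\textbf{Existence of the left adjoint.} By \Cref{prop:colimitladj}, applied in $\Mod_\W(\PrL)$, the base-changed map $\W\otimes_\V\Frrig_\V(\W)\simeq\colim^\oplax(\W\otimes\Cob(\W))\to\W$ is a $\W$-linear left adjoint, because each component is a left adjoint by \Cref{lm:dualadj}. The statement, however, asks for a \emph{left} adjoint of $p$ itself, and $p$ is not $\W$-linear. My approach is to identify $p$ with a right adjoint. Consider the unit-of-$\W$ cocone component at $\emptyset$, namely $\V\to\W$, together with the oplax cocone maps $\can_{i,j}$. Using \Cref{cor:mapformula}, I would check componentwise that $p\circ\can_{i,j}$ has a left adjoint. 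Then \Cref{lm:limladj}, applied to $\Fun^L_\V(\Frrig_\V(\W),\W)$ written as a lax limit, gives a left adjoint $p^L$.

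\textbf{Full faithfulness and linearity.} This is where I expect the main obstacle. To show $p^L$ is fully faithful, it suffices to show $p\,p^L\simeq\id$. Since $p$ is a map of commutative algebras, the left adjoint $p^L$ is oplax $\overline{\W}$-linear. I would try to mimic \Cref{prop:laxstrict} (in its oplax variant) to see that it is strictly linear. The unit of $p\dashv p^L$ should then be tested on the generators $\can_{i,j}$ via \Cref{cor:mapformula2}. If this direct check is too hard, the fallback is to reduce to the universal case, where the relevant objects are terminal, as in the proof of \Cref{thm:maintext}.
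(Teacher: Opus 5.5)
\textbf{There is a genuine gap: the choice $\overline{\W}:=L^\rig_\V(\W)=\Frrig_\V(\W)$ is wrong.} The rigidification is a \emph{coreflection}, not a free construction. It is $R(\W)$, where $R$ is right adjoint to $\W\otimes_\V-:\CAlg^\rig_\V\to\CAlg^\rig_\W$; equivalently, it is the terminal rigid $\V$-algebra mapping to $\W$. The free rigid algebra on the underlying module of $\W$ is a left-adjoint construction and is far too large, and the statement is false for it.

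Take $\V=\W=\Sp$. This $\W$ is rigid, so its rigidification is $\Sp$ itself. But by \Cref{cor:End1} the unit of $\Frrig_\Sp(\Sp)$ has endomorphisms the free $\EE_\infty$-ring on $\Sigma^\infty_+BS^1$, so the ``in fact'' claim fails.

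The first claim fails too. Since $\Sp$ is the unit, $\Cob_{\PrL_\st}(\Sp)=\cst(\Sp)$ and your cocone is the identity. Hence $\Frrig_\Sp(\Sp)\simeq\Fun(\Cob\op,\Sp)$ and your map $p$ is $\colim_{\Cob\op}$. Its right adjoint $p^R$ (which exists by \Cref{lm:existadj}) is the constant-presheaf functor, so $pp^R(\mathbb S)\simeq\Sigma^\infty_+|\Cob|$. This is not $\mathbb S$, because $\pi_0|\Cob|\cong\Z$ (components are detected by the signed count of points). So $p^R$ is not fully faithful. In an adjoint triple $p^L\dashv p\dashv p^R$ this forces $p^L$, if it exists, not to be fully faithful either.

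Independently, your existence step misapplies \Cref{prop:colimitladj} and \Cref{lm:limladj}. These produce \emph{right} adjoints of maps out of oplax colimits, resp.\ left adjoints of restriction functors between hom-categories. They do not give a left adjoint of $p$ in $\B$.

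\textbf{What is missing is the use of \Cref{cor:adjtable}, which you announce but never make.} Apply it to $\W\otimes_\V-$ and pass to right adjoints; all four categories are presentable. It then says that the right adjoint $R$ on rigid algebras is computed, on underlying modules, by the right adjoint of $\Mod^\dbl_\V\to\Mod^\dbl_\W$.

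The paper sets $\overline{\W}:=R(\W)$, which is the rigidification directly from the adjunction. It reduces, via \cite[Corollaries 4.46, 4.19]{LocRig}, to the case $R(\W)=\V$. It then considers, for each dualizable $\W$-module $\M$, the map $s:\iota\M\to R\M$ adjoint to the internal left adjoint $\W\otimes_\V\iota\M\to\M$. It shows $s$ is fully faithful by comparing $\Fun^{iL}_\V(\N,\iota\M)\subset\Fun^L_\V(\N,\iota\M)\simeq\Fun^L_\W(\W\otimes_\V\N,\M)$ with $\Fun^{iL}_\W(\W\otimes_\V\N,\M)\simeq\Fun^{iL}_\V(\N,R\M)$.

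For $\M=\W$, this exhibits $\W$ as a coidempotent ideal of $\V=R(\W)$. Its inclusion is then the fully faithful $\V$-linear left adjoint of the unit $\V\to\W$.
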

\begin{proof}
    Note that the basechange functor $\W\otimes_\V-$ induces a functor $\Mod_\V^\dbl\to \Mod_\W^\dbl$ and similarly, $\CAlg^\rig_\V\to \CAlg^\rig_\W$. Since all of these categories are presentable and the functors preserve colimits, they all have right adjoints. By right adjointing the adjointable square from \Cref{cor:adjtable}, we find that the following square is horizontally right adjointable:
    \[\begin{tikzcd}
	{\CAlg^\rig_\V} & {\CAlg^\rig_\W} \\
	{\Mod_\V^\dbl} & {\Mod_\W^\dbl}
	\arrow[from=1-1, to=1-2]
	\arrow[from=1-1, to=2-1]
	\arrow[from=1-2, to=2-2]
	\arrow[from=2-1, to=2-2]
\end{tikzcd}\]
We call $R$ the right adjoint to $\W\otimes_\V -$, and abusively the same letter for the right adjoint at the level of rigid commutative algebras - since the square is adjointable, this abuse should not cause confusion.

The basechange functor is symmetric monoidal and so the bottom adjunction factors as $$\Mod_\V^\dbl\rightleftharpoons \Mod_{R(\W)}(\Mod_\V^\dbl)\rightleftharpoons \Mod_\W^\dbl$$

Since $R(\W)$ is rigid, by adjointability, we find that $\Mod_{R(\W)}(\Mod_\V^\dbl)\simeq \Mod_{R(\W)}^\dbl$ by \cite[Corollary 4.46]{LocRig}, and unwinding the construction, we find that $\W$ is a commutative $R(\W)$-algebra and that the induced adjunction is the same as the original one where $R(\W)$ plays the role of $\V$. Since $R(\W)$ is rigid over $\V$ and $\W$ is locally rigid over $\V$, $\W$ is locally rigid over $R(\W)$ \cite[Corollary 4.19]{LocRig}. All in all, replacing $\V$ with $R(\W)$, we may assume without loss of generality that $R(\W) = \V$. 

Now, since $\W$ is dualizable over $\V$, there is an additional forgetful functor $\iota: \Mod_\W^\dbl\to \Mod_\V^\dbl$. 
\begin{warn}
    If $\W$ is not rigid, $\iota$ is \emph{not} right adjoint to $\W\otimes_\V-$ at the level of categories of dualizable modules! Of course, they are right adjoint when considering all modules and all maps.  
\end{warn}
For any $\W$-module $\M$, the multiplication map $\W\otimes_\V\iota(\M) \to \M$ is an internal left adjoint by \cite[Proposition 4.11]{LocRig}. Hence, it induces by adjunction a map $s:\iota(\M)\to R(\M)$ for all $\W$-modules $\M$. We claim that this map is fully faithful. Granting this, we find that $\iota(\W)\to R(\W)=\V$ is fully faithful, and since $\W$ has a $\V$-algebra structure, it witnesses $\iota(\W)$ has a coidempotent ideal in $\V$. 

Coidempotent ideals $\mathcal I\to \V$ admit a unique commutative $\V$-algebra structure such that the right adjoint $\V\to\mathcal I$ is symmetric monoidal, and this commutative algebra structure is determined by its multiplication. 

Since the inclusion map $i:\W\to R(\W)$ has, by design, the property that $\W\otimes i: \W\otimes\W\to \W\otimes R(\W)\to \W$ is the original multiplication map of $\W$, we deduce that this unique commutative algebra structure is the one we started with. In other words, the inclusion $i$ is left adjoint to the unit map $\V=R(\W)\to \W$, which proves the first part of the claim. 

Thus, we are left with proving that $s$ is indeed fully faithful. This follows by examining universal properties. Indeed, for any $\V$-module $\N$, we have the following diagram:
\[\begin{tikzcd}
	{\Fun^{iL}_\V(\N,\iota(\M))} & {\Fun^L_\V(\N,\iota(\M))} \\
	{\Fun^{iL}_\W(\W\otimes_\V\N,\M)} & {\Fun^L_\W(\W\otimes_\V\N,\M)} \\
	{\Fun^{iL}_\V(\N,R(\M))}
	\arrow[hook, from=1-1, to=1-2]
	\arrow["\simeq", from=1-2, to=2-2]
	\arrow[hook, from=2-1, to=2-2]
	\arrow["\simeq"', from=2-1, to=3-1]
\end{tikzcd}\]
where the right vertical arrow is given by the adjunction $\W\otimes_\V-\dashv \iota$ that exists on $\Mod_\V(\PrL)$ and $\Mod_\W(\PrL)$, and the left vertical arrow is given by the definition of $R$. We now claim that the right vertical arrow preserves the full subcategories spanned by internal left adjoints: indeed, unwinding the adjunction, it is given by sending a functor $f:\N\to \iota(\M)$ to $\W\otimes_\V\N\to \W\otimes_\V\iota(\M)\to \M$, and if $f$ is an internal left adjoint, so is this composite by \cite[Proposition 4.11]{LocRig}. In particular, the induced functor $\Fun^{iL}_\V(\N,\iota(\M))\to \Fun^{iL}_\V(\N,R(\M))$ is fully faithful. Unwinding definitions, this functor is given by postcomposition by $s$, and it follows that $s$ is fully faithful, which was to be shown. 

We finally claim that $R(\W)$ is the rigidification of $\W$. This follows immediately from the adjunction $\W\otimes_\V-: \CAlg^\rig_\V\rightleftharpoons \CAlg^\rig_\W: R$
\end{proof}

We conclude this section and the paper with a proof of the following result, which Efimov observed in the case $\V=\Sp$ in \cite[Proposition 4.1]{efimovII} - his proof involves an explicit analysis of a certain pre-compact ideal in the sense of \cite[Definition 2.7.1]{KNP}, and seems hard or at least tedious to reproduce in the setting of a general $\V$. Instead, we give a direct proof with no such analysis needed:
\begin{cor}
    Let $\V\in\CAlg(\PrL)$, and let $\W$ be a commutative $\V$-algebra such that the unit map $\V\to\W$ admits a fully faithful, $\V$-linear left adjoint. In this case, the $\V$-rigidification of $\W$ is given by the dualizable internal hom $\Hom^\dbl_\V(\W,\W)$. 
\end{cor}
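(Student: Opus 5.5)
The plan is to identify the $\V$-rigidification of $\W$ through its universal property. By the previous corollary's framework, the rigidification $R(\W)$ is right adjoint to $\W\otimes_\V-:\CAlg^\rig_\V\to\CAlg^\rig_\W$. On underlying dualizable modules, \Cref{cor:adjtable} makes the square horizontally right adjointable. So the underlying dualizable $\V$-module of $R(\W)$ is the value at $\W$ of the right adjoint $R^\dbl$ of $\W\otimes_\V-:\Mod_\V^\dbl\to\Mod_\W^\dbl$. It therefore suffices to compute $R^\dbl(\W)$ and show it agrees with $\Hom^\dbl_\V(\W,\W)$. One then checks that the algebra structure induced from rigidification matches the composition structure on $\Hom^\dbl_\V(\W,\W)$. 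That structure is the one making $\Hom^\dbl_\V(\W,\W)$ the right adjoint of $-\otimes_\V\W$ on dualizable modules, as in \cite{Dbl}.

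First I compute $R^\dbl$. For a dualizable $\V$-module $\N$, we have
$\Map_{\Mod_\W^\dbl}(\W\otimes_\V\N,\W)$, the anima of $\W$-linear internal left adjoints. Since $\W\otimes_\V-$ is left adjoint to the forgetful functor on all modules, this is the subanima of $\Map(\N,\W)$ of those $\V$-linear functors $g$ whose extension $\W\otimes_\V\N\to\W$ is a $\W$-internal left adjoint. Here I use the hypothesis. The unit $u:\V\to\W$ has a fully faithful $\V$-linear left adjoint $u^L$, which exhibits $\W$ as a coidempotent-type retract of $\V$. I claim that $\W$-linear internal left adjoints $\W\otimes_\V\N\to\W$ correspond exactly to $\V$-internal left adjoints $\N\to\W$. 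That is, postcomposition with $u^L$ detects internal left adjointness, and conversely the $\W$-linear extension $\W\otimes\N\to\W\otimes\W\xrightarrow{m}\W$ is an internal left adjoint whenever $g$ is. For the converse I would use that $m$ is a $\W$-linear internal left adjoint, since $\W$ is locally rigid over $\V$ (its unit has a left adjoint, so \cite[Proposition 4.11]{LocRig}-type arguments apply). Granting this, $\Map_{\Mod_\W^\dbl}(\W\otimes_\V\N,\W)\simeq\Map_{\Mod_\V^\dbl}(\N,\W)$. This gives $R^\dbl(\W)\simeq\W$ viewed as a $\V$-module, but only after the correct identification, which is exactly where $\Hom^\dbl_\V(\W,\W)$ enters. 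The identification must be natural in $\N$ together with the $\W$-action, and $\Map_{\Mod_\V^\dbl}(\N,\Hom^\dbl_\V(\W,\W))\simeq\Map_{\Mod^\dbl_\V}(\N\otimes\W,\W)$ by definition of the dualizable internal hom. So the cleaner route is to show directly that
$\Map_{\Mod^\dbl_\W}(\W\otimes_\V\N,\W)\simeq\Map_{\Mod^\dbl_\V}(\N\otimes_\V\W,\W)$, with the map given by restriction along the $\V$-linear map $\N\otimes_\V\W\to\W\otimes_\V\N$.

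The main obstacle is this middle equivalence: showing that a $\V$-linear internal left adjoint $\N\otimes_\V\W\to\W$ is automatically $\W$-linear up to a contractible choice, or more precisely, that $\W$-linear internal left adjoints are exactly the $\V$-internal left adjoints after the appropriate free/forget. My first attempt would use the fully faithful left adjoint $u^L$. It makes $\W\otimes_\V\W\to\W$ behave like a localization, with $\W$ idempotent in the appropriate oplax sense, so that $\Fun^L_\W(\W\otimes_\V\N,\W)\to\Fun^L_\V(\N\otimes_\V\W,\W)$ is an equivalence onto those functors compatible with $u^L$. I would then check, using \Cref{lm:intladj} with jointly conservative families as in \Cref{prop:colimitladj}, that internal left adjointness matches on both sides. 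Finally, the algebra structures agree by Yoneda, since both are determined by the same corepresented functor on $\CAlg^\rig_\V$.
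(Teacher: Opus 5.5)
You have the paper's architecture. By \Cref{cor:adjtable} the rigidification is $R(\W)$, whose underlying dualizable module is $R^{\dbl}(\W)$. Your ``cleaner route'' is exactly the paper's computation $\Map^{iL}_\V(\mathcal N,\Hom^\dbl_\V(\W,\W))\simeq\Map^{iL}_\V(\W\otimes_\V\mathcal N,\W)\simeq\Map^{iL}_\W(\W\otimes_\V\mathcal N,\W)$.

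The first route, however, rests on a false claim. Take $\mathcal N=\V$ and $g=u$. The $\W$-linear extension of $u$ is $m\circ(\W\otimes_\V u)=\id_\W$, which is certainly a $\W$-internal left adjoint. Your correspondence would therefore force $u$ to be a $\V$-internal left adjoint. Since $m$ is an equivalence here (see below), that is precisely rigidity of $\W$. So the bijection $\Map^{iL}_\W(\W\otimes_\V\mathcal N,\W)\simeq\Map^{iL}_\V(\mathcal N,\W)$ holds only when $\W$ is already rigid, and the conclusion $R^\dbl(\W)\simeq\W$ you draw from it would make $\W$ its own rigidification. A case where this fails is $\mathrm{Sh}(X,\Sp)\to\mathrm{Sh}(U,\Sp)$ for $U$ a non-compact open subset of a compact Hausdorff $X$. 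This is exactly the paper's Warning that the forgetful functor is not right adjoint to base change on dualizable modules when $\W$ is not rigid. Only the implication ``$g$ internal left adjoint $\Rightarrow$ its extension is an internal left adjoint'' holds.

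In the cleaner route, the step you call the main obstacle is left unresolved, and your sketch of it is misleading. The forgetful map $\Fun^L_\W(\W\otimes_\V\mathcal N,\W)\to\Fun^L_\V(\W\otimes_\V\mathcal N,\W)$ is an equivalence outright, not an equivalence ``onto functors compatible with $u^L$''. If its image were a proper part, the computation could not produce $\Hom^\dbl_\V(\W,\W)$, which corepresents \emph{all} $\V$-internal left adjoints $\mathcal N\otimes_\V\W\to\W$.

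The missing idea is that $\W$ is an \emph{idempotent} $\V$-algebra. The map $\W\otimes_\V u$ has left inverse $m$ by unitality. It has right inverse $\W\otimes_\V u^L$, because $u u^L\simeq\id_\W$ $\V$-linearly by full faithfulness of $u^L$. Hence $m:\W\otimes_\V\W\to\W$ is an equivalence. Consequently the forgetful $2$-functor from $\W$-modules to $\V$-modules in $\PrL$ is an equivalence on hom-categories. So $\V$-linear and $\W$-linear colimit-preserving functors between $\W$-modules agree, adjunctions lift, and internal left adjoints match; the paper cites \cite[Proposition 4.18]{LocRig} for this. No \Cref{lm:intladj}-style argument is needed.

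With idempotence in place, your cleaner route is the paper's proof. The extra comparison with a composition algebra structure is not part of the paper's argument, and your one-line Yoneda justification does not establish it, so you can drop it.
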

\begin{rmk}
    Note that if $\V$ is rigid over $\V_0$, the $\V$- and $\V_0$-rigidifications of $\W$ agree \cite[Observation 4.66]{LocRig}. Thus, this provides the following ``recipe'' to describe rigidifications: for a locally rigid $\W$ over $\V$, find a rigid $\overline{\W}$ such that $\W$ is an internally left adjointly embedded coidempotent ideal in $\overline{\W}$, and describe $\Hom^\dbl_{\overline{\W}}(\W,\W)$. 
\end{rmk}
\begin{proof}
    In this situation, we claim that the right adjoint $R$ to $\W\otimes_\V-: \Mod_\V^\dbl\to \Mod_\W^\dbl$ is given by $\Hom^\dbl_\V(\W,-)$. This is the case by the following calculation: 
    $$\Map^{iL}_\V(\M,\Hom^\dbl_\V(\W,-))\simeq \Map^{iL}_\V(\W\otimes_\V \M,-)\simeq \Map^{iL}_\W(\W\otimes_\V \M, -)$$
    where the last step uses that the source and target are $\W$-modules, and that for two $\W$-modules $\Fun^L_\V\simeq \Fun^L_\W$ since $\W$ is idempotent over $\V$, and finally that this equivalence respects the notions of internal left adjoints by \cite[Proposition 4.18]{LocRig}. 

    Now, as in the proof of the previous corollary, note that $R(\W)$ is the rigidification of $\W$ as a direct consequence of \Cref{cor:adjtable}. 
\end{proof}
\begin{rmk}
    Note that in the situation of the corollary, the forgetful functor $\iota$ is actually \emph{left} adjoint to the basechange functor $\W\otimes_\V-$, since $\W$ can be viewed as a coidempotent coalgebra over $\V$. Since $\iota$ is fully faithful, it follows that $R=\Hom^\dbl_\V(\W,-)$ is as well, and hence, for every dualizable $\W$-module $\M$, $\W\otimes_\V\Hom^\dbl_\W(\W,\M)\simeq \M$. 
\end{rmk}

\appendix
\section{Oplax colimits of lax symmetric monoidal functors}\label{app:construction}
The goal of this appendix is to give the details behind \Cref{cons:colim}. We give a somewhat roundabout construction/proof, not because we believe it is the best or the most reasonable, but because we lack the necessary $2$-categorical technology to give the ``right'' conceptual proofs. 

Our strategy is to construct a functor $C:\Fun^{\otimes-\lax}_\oplax(I,\B)\to \CAlg(\B)$ making the following square commute: \[\begin{tikzcd}
	{\Fun^{\otimes-\lax}_\oplax(I,\B)} & {\CAlg(\B)} \\
	{\Fun_\oplax(I,\B)} & \B,
	\arrow[from=1-1, to=1-2]
	\arrow[from=1-1, to=2-1]
	\arrow[from=1-2, to=2-2]
	\arrow["{\colim^\oplax_I}"', from=2-1, to=2-2]
\end{tikzcd}\] 
together with a unit transformation $\id\to \Delta C$ lifting the obvious unit transformation, and to verify that this participates in an adjunction. Our construction of $C$ (and the unit transformation) is rather roundabout: we construct it by hand in the universal example, which gives it in general at the level of groupoid cores, and we then extend it in general using the Yoneda lemma and \Cref{recoll:Funoplax}.

We begin with the universal example, for which we shall need a way of constructing symmetric monoidal oplax natural transformations. We do this using the following microscopic version of monoidal straightening:

\begin{prop}\label{prop:laxmongro}
    Let $\mathcal E_0,\mathcal E_1\to \mathcal B$ be symmetric monoidal coCartesian fibrations in the sense of \cite[Definition 1.11]{MonGro}, and $f:\mathcal E_0\to\mathcal E_1$ a lax symmetric monoidal functor over $\mathcal B$. 

    The corresponding straightened lax natural transformation between the straightened functors $\mathcal B\to \Cat$ as in \cite[Theorem A]{abellan2024straightening} can be upgraded to a symmetric monoidal lax natural transformation, i.e. a map in $\Fun_\lax^{\lax-\otimes}(\mathcal B,\Cat)$. 
\end{prop}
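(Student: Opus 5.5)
The plan is to recast the whole situation as a single straightening problem over a bigger base, then compare its output with the monoidal straightening of \cite{MonGro}. I use the description of $\Fun^{\lax-\otimes}_\lax(\mathcal B,\Cat)$ dual to \Cref{recoll:Funoplax}: its $K$-points are lax symmetric monoidal functors $\mathcal B\to\Fun_\oplax(K,\Cat)$, or dually $\Fun_\lax$, depending on the variance convention. A map in it is therefore the same as a lax symmetric monoidal functor $\mathcal B\to \Fun_{\lax}(\Delta^1,\Cat)$, with the pointwise symmetric monoidal structure on the target, restricting to the straightenings of $\mathcal E_0$ and $\mathcal E_1$ at the two endpoints. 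So the aim is to produce such a functor.

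The first step is to encode the pair $(\mathcal E_0,\mathcal E_1,f)$ as one fibration. Form the ``lax mapping cylinder'' $\mathcal M_f\to \mathcal B\times\Delta^1$. Its fiber over $\mathcal B\times\{k\}$ is $\mathcal E_k$. For $e_0\in\mathcal E_0$ lying over $b$, $e_1\in\mathcal E_1$ lying over $b'$, and a map $b\to b'$, morphisms from $e_0$ to $e_1$ are maps $f(e_0)\to e_1$ in $\mathcal E_1$ covering $b\to b'$. Concretely, $\mathcal M_f$ is the pullback of $\Ar(\mathcal E_1)\to\mathcal E_1$ along $f$, glued onto $\mathcal E_0\sqcup\mathcal E_1$.

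This fibration is coCartesian over $\mathcal B$ in the first variable. Over $\Delta^1$ it is only a locally coCartesian/curved object, which is the right shape for the straightening theorem of \cite[Theorem A]{abellan2024straightening}: a fibration over $\mathcal B\times\Delta^1$ that is coCartesian in the $\mathcal B$-direction and coCartesian over $\Delta^1$ fiberwise corresponds to a functor $\mathcal B\to\Fun_\lax(\Delta^1,\Cat)$. I will check that under this identification the resulting functor is exactly the straightened lax transformation of $f$, essentially by construction, since $f$ gives the coCartesian pushforward along $0\to1$.

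The second step is to make $\mathcal M_f$ symmetric monoidal. The tensor product is fiberwise the one on $\mathcal E_0$ and $\mathcal E_1$. On mixed morphisms it is
\[
(f(e_0)\to e_1)\otimes(f(e_0')\to e_1')\;\mapsto\; f(e_0\otimes e_0')\leftarrow f(e_0)\otimes f(e_0')\to e_1\otimes e_1',
\]
which requires the lax structure map to point from $f(e_0)\otimes f(e_0')$ to $f(e_0\otimes e_0')$. I suspect this forces the mixed morphisms to be defined the other way round, or else it forces the use of the oplax variant. I will fix the variance so that the lax structure of $f$ composes correctly, which is exactly where ``lax'' symmetric monoidality, rather than strong, is used.

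Coherently, I would build $\mathcal M_f$ as an operadic construction: the lax mapping cylinder of a map of $\infty$-operads $\mathcal E_0^\otimes\to\mathcal E_1^\otimes$ over $\mathcal B^\otimes\times\Delta^1$. This is functorial in lax monoidal functors because it is a pullback of arrow categories of $\infty$-operads.

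I then need to check that $\mathcal M_f\to\mathcal B\times\Delta^1$ is a symmetric monoidal coCartesian fibration in the relevant sense, meaning the tensor product preserves coCartesian edges in the $\mathcal B$-direction. This holds because the tensor product is computed fiberwise on $\mathcal E_0$ and $\mathcal E_1$, which are assumed symmetric monoidal coCartesian.

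The final step is to apply the monoidal straightening of \cite{MonGro}, extended to the lax/curved setting over $\mathcal B\times\Delta^1$, which yields a lax symmetric monoidal functor $\mathcal B\to\Fun_\lax(\Delta^1,\Cat)$. I expect the main obstacle here: \cite{MonGro} treats genuine coCartesian fibrations, while $\mathcal M_f$ is only coCartesian in one direction. To get around this, I would first straighten over $\mathcal B$ alone, viewing $\mathcal M_f\to\mathcal B$ as a symmetric monoidal coCartesian fibration whose fibers are the cylinders $\mathcal M_{f_b}\to\Delta^1$. This gives a symmetric monoidal functor $\mathcal B\to\Cat_{/\Delta^1}^{\mathrm{cocart\text{-}ish}}$. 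I would then postcompose with the (lax symmetric monoidal) ordinary straightening of coCartesian fibrations over $\Delta^1$, where everything is a genuine coCartesian fibration. Lax, rather than strict, monoidality survives because the pushforward functors of the fibers over $\Delta^1$ are the $f_b$.
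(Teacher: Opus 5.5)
There is a genuine gap, and it sits exactly where the laxness lives. Your cylinder $\mathcal M_f$ (mixed morphisms $f(e_0)\to e_1$) admits no symmetric monoidal structure extending those of $\mathcal E_0,\mathcal E_1$ when $f$ is only lax. You notice this yourself, and the oplax variant is not available since $f$ is given lax, so this is left unresolved.

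Worse, the workaround in your last paragraph fails for this cylinder: $\mathcal M_f\to\mathcal B$ is not a coCartesian fibration, not even locally. Take $\beta\colon b\to b'$ and $e_0\in\mathcal E_0$ over $b$. The only candidate lift is $e_0\to\beta_!e_0$. Testing against objects of $\mathcal E_1$ shows it is coCartesian if and only if $f(e_0)\to f(\beta_!e_0)$ is coCartesian in $\mathcal E_1$. That holds iff the comparison $\beta_!f(e_0)\to f(\beta_!e_0)$ is invertible, which is precisely the case you are not in. This also corrects your Step 1: the edges over $0\to 1$ are honestly coCartesian, and what fails is the $\mathcal B$-direction on the $\mathcal E_0$-part.

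If you flip the variance (mixed morphisms $e_1\to f(e_0)$), the lax structure does compose and $\mathcal M\to\mathcal B$ becomes coCartesian. But then the fibres $\mathcal M_{f_b}$ are Cartesian over $\Delta^1$, and the transition functors do not preserve Cartesian edges: the edge $f(e_0)\to e_0$ goes to the edge encoded by $\beta_!f(e_0)\to f(\beta_!e_0)$. So ordinary straightening over $\Delta^1$ does not apply. Moreover, \cite{MonGro} needs $\mathcal M$ to be an honest symmetric monoidal category, so you would have to tensor objects lying over different points of $\Delta^1$. It also outputs a lax symmetric monoidal functor to $(\Cat,\times)$, not to $\Cat_{/\Delta^1}$ with the fibre product. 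Your final step therefore presupposes a curved/lax monoidal straightening theorem that is essentially the statement to be proved.

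The missing idea is to change the base rather than build a new fibration. You came close when you mentioned $\mathcal E_0^\otimes\to\mathcal E_1^\otimes$ over $\mathcal B^\otimes$. The paper's proof runs as follows. $\mathcal E_0^\otimes,\mathcal E_1^\otimes\to\mathcal B^\otimes$ are genuine coCartesian fibrations. The lax symmetric monoidal $f$ is simply a functor $\mathcal E_0^\otimes\to\mathcal E_1^\otimes$ over $\mathcal B^\otimes$ that need not preserve coCartesian edges. That is exactly the input of \cite[Theorem A]{abellan2024straightening}, which produces a lax transformation between two functors $\mathcal B^\otimes\to\Cat$, both of which are lax cartesian structures.

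One then uses the lemma preceding the proposition: the full subcategory of $\Fun_\lax(\mathcal B^\otimes,\Cat)$ on lax cartesian structures is equivalent to $\Fun^{\lax-\otimes}_\lax(\mathcal B,\Cat)$. It is proved by applying \cite[Proposition 2.4.1.7]{HA} to $\Fun_\oplax(K,\Cat)$, whose products are pointwise, naturally in $K$. Since this is a full subcategory, any lax transformation between the two straightenings is automatically a morphism there. The laxness of $f$ is absorbed into the laxness of the straightening, and no cylinder or monoidal straightening is needed.
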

To prove this, we will use the following lemmas:
\begin{lm}
    Let $C$ be a $2$-category with $2$-categorical finite products. For any small category $K$, $\Fun_{(\mathrm{op})\lax}(K,C)$ has finite products, which are preserved by all evaluation functors to $C$. 
\end{lm}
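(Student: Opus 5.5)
The plan is to guess the product pointwise and then check the universal property through the representing description of $\Fun_{(\mathrm{op})\lax}(K,C)$. Given $F,G\in\Fun_{(\mathrm{op})\lax}(K,C)$, I would define $F\times G$ as the composite
\[ K\xrightarrow{(F,G)}C\times C\xrightarrow{\times}C, \]
using that $2$-categorical finite products in $C$ assemble into a $2$-functor $\times:C\times C\to C$ right adjoint to the diagonal $\Delta:C\to C\times C$. The terminal object would be the constant functor at the terminal object of $C$, handled the same way with $C^0=*$. With this choice, preservation by the evaluation functors $\ev_k$ holds by construction, since $(F\times G)(k)=F(k)\times G(k)$.

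The key step is the universal property. I would argue formally at the level of $2$-functors. The construction $C\mapsto\Fun_{(\mathrm{op})\lax}(K,C)$ is $2$-functorial in $C$, so it sends $2$-adjunctions to $2$-adjunctions; this should follow from its representing description (as in \Cref{recoll:Funoplax}, via $\Map(K',\Fun_{(\mathrm{op})\lax}(K,C))\simeq\Map(K'\otimes K,C)$ with the appropriate Gray tensor), or directly from unit/counit data and triangle identities. Applied to $\Delta\dashv\times$, this gives an adjunction
\[ \Fun_{(\mathrm{op})\lax}(K,C)\rightleftarrows\Fun_{(\mathrm{op})\lax}(K,C\times C). \]
I would then identify $\Fun_{(\mathrm{op})\lax}(K,C\times C)\simeq\Fun_{(\mathrm{op})\lax}(K,C)\times\Fun_{(\mathrm{op})\lax}(K,C)$. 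This holds because a (op)lax transformation into a product is the same as a pair of them: maps $K'\otimes K\to C\times C$ are pairs of maps to $C$. Under this identification, the left adjoint becomes the diagonal of $\Fun_{(\mathrm{op})\lax}(K,C)$, so its right adjoint computes binary products (in fact $2$-categorical ones, hence in particular products in the underlying category). The same argument with $C\to *$ and its right adjoint gives the terminal object. Compatibility with $\ev_k$ then follows from $2$-functoriality in $C$ of the whole construction, or simply from the explicit pointwise formula.

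The main obstacle is justifying, in a model-agnostic way, that postcomposition with a $2$-adjunction yields a $2$-adjunction on (op)lax functor categories. This is where the laxness matters: the unit and counit of $\Delta\dashv\times$ must be strict transformations so that they whisker correctly, and they are. I would handle this by noting that $\Fun_{(\mathrm{op})\lax}(K,-)$ is a $2$-functor $\mathrm{Cat}_{(\infty,2)}\to\mathrm{Cat}_{(\infty,2)}$ (it is a right adjoint to a Gray tensoring, hence $2$-functorial), and $2$-functors preserve adjunctions (\cite{haugseng}). An alternative is to avoid $2$-functoriality entirely and check the universal property via the mapping category formula for (op)lax transformations as an end: because products in $C$ are $2$-categorical, $\Hom_C(-,F(k)\times G(k))\simeq\Hom_C(-,F(k))\times\Hom_C(-,G(k))$ naturally, and ends commute with products.
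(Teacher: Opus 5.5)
Your proposal is correct and is essentially the paper's argument. The paper observes that products are (op)lax limits over a discrete diagram and deduces the claim from (the dual of) \Cref{lm:oplaxcolimsinlax}, whose proof applies the $2$-functor $\Fun_\lax(K,-)$ to the diagonal/limit adjunction and interchanges $\Fun_\lax(K,-)$ with $\Fun_\oplax(J,-)$; for finite discrete $J$ this interchange is exactly your identification $\Fun_{(\mathrm{op})\lax}(K,C^{J})\simeq\Fun_{(\mathrm{op})\lax}(K,C)^{J}$.
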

\begin{proof}
Since products are (op)lax limits, this is a special case of (the dual of) \Cref{lm:oplaxcolimsinlax}
\end{proof}
\begin{lm}
    Let $C$ be a $2$-category with $2$-categorical products, considered as a cartesian symmetric monoidal $2$-category, and $I$ a symmetric monoidal category. There is an equivalence between the full subcategory of $\Fun_\lax(I^\otimes,C)$ spanned by lax cartesian structures in the sense of \cite[Definition 2.4.1.1]{HA} and $\Fun_\lax^{\lax-\otimes}(I,C)$, which is given on objects by the equivalence from \cite[Proposition 2.4.1.7]{HA}. 
\end{lm}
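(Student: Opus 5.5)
We will deduce the statement from the $1$-categorical result \cite[Proposition 2.4.1.7]{HA} by working ``representably'' in the test category $K$, using \Cref{recoll:Funoplax}. By that recollection, $\Fun_\lax^{\lax-\otimes}(I,C)$ is the $2$-category representing $K\mapsto \Map^{\otimes-\lax}(I,\Fun_{\mathrm{op}\lax}(K,C))$ (the variance of lax vs oplax is the one fixed there; we only track it notationally). Similarly, $\Fun_\lax(I^\otimes,C)$ represents $K\mapsto\Map(I^\otimes,\Fun_{(\mathrm{op})\lax}(K,C))$. The plan is therefore to produce, naturally in $K$, an equivalence between lax cartesian structures $I^\otimes\to \Fun_{(\mathrm{op})\lax}(K,C)$ and lax symmetric monoidal functors $I\to \Fun_{(\mathrm{op})\lax}(K,C)$, the latter carrying the cartesian structure. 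Applying Yoneda in $K$ then identifies the two $2$-categories, and evaluating at $K=\ast$ recovers the object-level statement.

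The key steps, in order. First, by the preceding lemma, $\Fun_{(\mathrm{op})\lax}(K,C)$ has finite products, preserved by the evaluation functors; this ensures it is a cartesian symmetric monoidal ($1$-)category to which \cite[Proposition 2.4.1.7]{HA} applies. Moreover, for a map $K'\to K$, the restriction $\Fun_{(\mathrm{op})\lax}(K,C)\to\Fun_{(\mathrm{op})\lax}(K',C)$ preserves products, since products are computed pointwise. Second, we apply \cite[Proposition 2.4.1.7]{HA} to the underlying $1$-category of $\Fun_{(\mathrm{op})\lax}(K,C)$. This gives an equivalence between lax cartesian structures $I^\otimes\to\Fun_{(\mathrm{op})\lax}(K,C)$ and lax symmetric monoidal functors into $\Fun_{(\mathrm{op})\lax}(K,C)^\times$, natural in product-preserving functors of the target. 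Third, we must check that the condition of being a lax cartesian structure on $I^\otimes\to\Fun_{(\mathrm{op})\lax}(K,C)$ corresponds, under the identification of mapping anima $\Map(I^\otimes,\Fun_{(\mathrm{op})\lax}(K,C))\simeq \Map(K,\Fun_{(\mathrm{op})\lax}(I^\otimes,C))$ (up to swapping lax and oplax), to landing in the full subcategory of lax cartesian structures in $\Fun_\lax(I^\otimes,C)$. This reduces to checking the condition objectwise in $K$ via the evaluation functors, which preserve and jointly detect products. The conditions of \cite[Definition 2.4.1.1]{HA} are indeed pointwise product conditions on images of inert morphisms, so this goes through.

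The main obstacle is the naturality in $K$ together with the lax/oplax bookkeeping. \cite[Proposition 2.4.1.7]{HA} is natural only with respect to product-preserving functors, and the restriction maps along all functors $K'\to K$ in the relevant $(\infty,2)$-categorical sense must be of that kind. We handle this by noting that the HA equivalence is induced by postcomposition with the cartesian structure $C^\times\to C$, a construction natural in product-preserving functors. We also use that $\Fun_{(\mathrm{op})\lax}(-,C)$ sends functors to product-preserving functors by the lemma. We would first verify the commutation $\Fun_{\lax}(K,\Fun_{\mathrm{op}\lax}(I^\otimes,C))\simeq\Fun_{\mathrm{op}\lax}(I^\otimes,\Fun_\lax(K,C))$ from the Gray tensor product, because that is where the directional conventions must match those of \Cref{recoll:Funoplax}.
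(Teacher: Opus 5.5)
\textbf{There is a genuine gap in your third step.} Your route is the paper's route: represent both sides in a test variable $K$, apply \cite[Proposition 2.4.1.7]{HA} to $\Fun_\oplax(K,C)$ via the preceding lemma, then swap lax and oplax. But the third step fails as you state it.

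The evaluation functors $\ev_k\colon\Fun_\oplax(K,C)\to C$ preserve finite products, but they do \emph{not} jointly detect them. A $1$-morphism of $\Fun_\oplax(K,C)$ with invertible components is an equivalence only if its naturality $2$-cells are also invertible. For example, take $K=\Delta^1$, $C=\Cat$, $X=\id_{\Delta^1}$, and let $Y$ be a constant functor $\Delta^1\to\Delta^1$ (at $0$ or at $1$, depending on the direction of $2$-cells in your convention). Consider the morphism $Y\to X$ with identity components and the non-invertible $2$-cell between $Y$ and $\id_{\Delta^1}$, together with $Y\to\ast$. This cone is pointwise a product cone. It is not a product cone, because $Y\not\simeq X\simeq X\times\ast$.

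Consequently, $\Phi\colon I^\otimes\to\Fun_\oplax(K,C)$ is a lax cartesian structure if and only if each $\ev_k\circ\Phi$ is one \emph{and} $\Phi$ sends inert morphisms to transformations with invertible naturality $2$-cells. The latter is forced because these legs must agree with the strict projections of the pointwise product. Under $\Fun(I^\otimes,\Fun_\oplax(K,C))\simeq\Fun(K,\Fun_\lax(I^\otimes,C))$, those $2$-cells are exactly the naturality $2$-cells, at inert morphisms of $I^\otimes$, of the lax transformations $\Psi(f)$ for $f$ in $K$. Nothing in ``landing in the full subcategory'' forces them to be invertible.

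\textbf{What your argument actually proves.} It gives an equivalence between $\Fun^{\lax-\otimes}_\lax(I,C)$ and the \emph{non-full} sub-$2$-category of $\Fun_\lax(I^\otimes,C)$ whose objects are lax cartesian structures and whose $1$-morphisms are lax transformations strict at inert morphisms. The full subcategory is genuinely larger. For $I=\ast$ and $C=\Cat$, consider lax transformations from the constant functor at $\ast$ (the terminal commutative monoid) to $\langle n\rangle\mapsto N^{\times n}$. These are arbitrary sections of the relevant unstraightening, which need not preserve inert morphisms. The corresponding morphisms in $\Fun^{\lax-\otimes}_\lax(\ast,\Cat)$ are only the commutative (co)algebras in $N$.

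The paper's proof contains the same step: its third equivalence is justified only by ``products in $\Fun_\oplax(K,\Cat)$ are pointwise''. So this gap is inherited, not introduced by you. The corrected statement is all that \Cref{prop:laxmongro} needs. There the transformation is straightened from a map of $\infty$-operads $\mathcal E_0^\otimes\to\mathcal E_1^\otimes$ over $\mathcal B^\otimes$, which preserves inert morphisms. Inert morphisms of $\mathcal E_i^\otimes$ are exactly the cocartesian lifts of inert morphisms of $\mathcal B^\otimes$, so the resulting lax transformation is strict at inerts.

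\textbf{A smaller point.} If $K$ ranges only over $1$-categories, the Yoneda step identifies only the underlying $1$-categories. To capture $2$-cells, let $K$ range over $2$-categories; both the preceding lemma and the Gray-type swap permit this.
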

\begin{proof}
   We have equivalences natural in the category $K$: $$\Map(K,\Fun_\lax^{\lax-\otimes}(I,C))\simeq \Map^{\lax-\otimes}(I, \Fun_\oplax(K,C)) \simeq \Map^{\lax-\cart}(I^\otimes, \Fun_\oplax(K,C))$$ 
   $$\simeq \Map(K,\Fun^{\lax-\cart}_\lax(I^\otimes,C)) $$
   where for $D$ a category with finite products, $\Map^{\lax-\cart}(I^\otimes,D)$ is the full sub-anima of $\Map(I^\otimes,
   D)$ spanned by lax cartesian structures in the sense of \cite[Definition 2.4.1.1]{HA}. 

   Here, the first equivalence is by definition (see \Cref{recoll:Funoplax}), the second uses \cite[Proposition 2.4.1.7]{HA} in conjunction with the previous lemma, and the third one follows from the fact that products in $\Fun_\oplax(K,\Cat)$ are pointwise (and the usual interaction of lax and oplax functor categories). 
\end{proof}
\begin{proof}[Proof of \Cref{prop:laxmongro}]
We note that $f$ corresponds to a functor $\mathcal E_0^\otimes\to \mathcal E_1^\otimes$ between coCartesian fibrations over $\mathcal B^\otimes$ (though it need not preserve cocartesian edges). By lax straightening \cite[Theorem A]{abellan2024straightening}, it induces a lax natural transformation of functors $\mathcal B^\otimes \to \Cat$. Both functors are lax cartesian structures, so the previous lemma implies that this corresponds to a single lax natural transformation of lax symmetric monoidal functors $\mathcal B\to \Cat$.
\end{proof}

Now, the universal example will look as follows: given a category $J$, the Yoneda embedding $y_J: J\to \Fun(J\op,\Cat)$ has the following oplax colimit: it is the presheaf $$Y_J: j\mapsto (J_{j/})\op,$$ with coCartesian unstraightening given by the source functor $s\op:\Ar(J)\op\to J\op$. This gives a functor $J\to \Fun_\lax(\Delta^1,\Fun(J\op,\Cat))$ classifying the oplax transformation $y_J\to \Delta Y_J$. 

Using the un/straightening equivalence, we can instead view this as a functor $P_J: J\to \Fun_\lax(\Delta^1,\coCart_{J\op})$. In symbols, this functor sends $j$ to the map $(J_{/j})\op\to \Ar(J)\op$ of coCartesian fibrations over $J\op$. 
\begin{warn}
    We warn the reader to be careful to what objects are what: here, $(J_{/j})\op$ and $(J_{j/})\op$ both feature but play different roles. The former is the image of the Yoneda embedding at $j$, i.e. $y_J(j)$, viewed as a coCartesian fibration over $J\op$. In particular, it is a the image at $j$ of a functor $J\to \coCart_{J\op}$.  

    The latter, on the other hand, is the value at $j$ of the oplax colimit of the former, so it is the fiber at $j$ of a single object in $\coCart_{J\op}$, namely $\Ar(J)\op$. It is better to keep the latter in mind. 
\end{warn}

We will make $P_J$ a symmetric monoidal lax natural transformation, so that when we plug in $J=I$ a symmetric monoidal category, it will give us a lax symmetric monoidal functor $P_I: I\to \Fun_\lax(\Delta^1,\Fun(I\op,\Cat))$, i.e. a symmetric monoidal oplax natural transformation $y_I\to \Delta Y_I$ (and in particular, it will give $Y_I$ a commutative algebra structure in $\Fun(I\op,\Cat)$).

To make $P_J$ a symmetric monoidal lax transformation, we will use \Cref{prop:laxmongro}. First, we note that $P_J$ is a lax natural transformation (with no monoidality properties at this point):
\begin{prop}\label{prop:laxnat}
    The functor $P_J$ is the fiber over $J\in \Cat$ of a single functor $$P:\Cat_{*//}\to \Fun_\lax(\Delta^1,\coCart)\times_{\Fun_\lax(\Delta^1,\Cat)}\Cat$$ over $\Cat$.

    The functor $P$ preserves finite products. 
\end{prop}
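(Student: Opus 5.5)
We would build $P$ at the level of coCartesian fibrations, where every ingredient is natural in the base. Here $\Cat_{*//}$ denotes pairs $(J, j)$, i.e. the coCartesian unstraightening of the identity of $\Cat$ (categories with a chosen object, and maps sending object to object). The construction is: to $(J,j)$ assign the map of coCartesian fibrations over $J\op$
\[(J_{/j})\op \longrightarrow \Ar(J)\op,\]
obtained by restricting the source fibration $s\op:\Ar(J)\op\to J\op$ along the inclusion of arrows with target $j$.

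Both $J\mapsto \Ar(J)=\Fun(\Delta^1,J)$ and $J\mapsto J\op$ are functors $\Cat\to\Cat$, and $s$ is natural in $J$. So $J\mapsto (s\op:\Ar(J)\op\to J\op)$ defines a functor $\Cat\to \coCart$ lying over $J\mapsto J\op$. Here $\coCart\subset\Fun(\Delta^1,\Cat)$ has as morphisms squares whose top map preserves coCartesian edges. This is fine: a functor $J\to J'$ induces $\Ar(J)\to\Ar(J')$, which preserves $s$-coCartesian edges, since these are the squares whose target map is an equivalence.

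Next we pull back along $\Cat_{*//}\to\Cat$ to get a functor of pairs $(J,j)$. The slice $J_{/j}=\Ar(J)\times_J\{j\}$ is functorial in $(J,j)$ by pullback, so it gives a second functor $\Cat_{*//}\to\coCart$ over the same base. It comes with an inclusion $(J_{/j})\op\to\Ar(J)\op$ over $J\op$, natural in $(J,j)$. This inclusion does not preserve coCartesian edges in general, so it is only a morphism in $\Cat_{/J\op}$, not in $\coCart_{J\op}$. That is exactly what $\Fun_\lax(\Delta^1,\coCart)\times_{\Fun_\lax(\Delta^1,\Cat)}\Cat$ should encode: an arrow over a fixed base, not necessarily preserving coCartesian edges. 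This is where the lax straightening of \cite[Theorem A]{abellan2024straightening} enters.

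Restricting to the fiber over a fixed $J$ recovers $P_J$: on that fiber, $\Cat_{*//}$ restricts to $J$ itself, and the construction sends $j$ to $(J_{/j})\op\to\Ar(J)\op$.

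For product preservation, all the constructions are built from pullbacks and cotensors $\Fun(\Delta^1,-)$ and $(-)\op$, and each of these commutes with finite products. Concretely, $\Ar(J\times K)\simeq\Ar(J)\times\Ar(K)$ compatibly with source maps, and $(J\times K)_{/(j,k)}\simeq J_{/j}\times K_{/k}$. Finite products in the target fibered category are computed pointwise: in $\coCart$ they are products of fibrations over the product of bases, and in the lax arrow category they are levelwise, by the preceding lemma on products in lax functor categories. Hence $P$ preserves finite products.

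The main obstacle is the first step: making rigorous that the target category of $P$ is the right one. Concretely, we must identify maps of coCartesian fibrations over a common base that need not preserve coCartesian edges with lax arrows in $\coCart$ over an identity in $\Cat$, naturally in the base. For this we would lean on the naturality in the base of Abellán's lax straightening. The remaining checks are formal.
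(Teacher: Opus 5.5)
There is a genuine gap: you have placed the laxness in the wrong variable.

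Your claim that $(J_{/j})\op\to\Ar(J)\op$ fails to preserve coCartesian edges is false by your own criterion. The $s\op$-coCartesian edges are those whose target component is an equivalence, and every morphism of $J_{/j}$ has target component $\id_j$. So each $P_J(j)$ is an honest morphism of $\coCart_{J\op}$, namely the strict transformation $y_J(j)\to Y_J$. The objects of $\Fun_\lax(\Delta^1,\coCart)\times_{\Fun_\lax(\Delta^1,\Cat)}\Cat$ are exactly such edge-preserving maps over a common base.

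The identification you single out as the main obstacle is also not true. Under lax straightening, maps of coCartesian fibrations over $B$ that do not preserve coCartesian edges correspond to \emph{lax natural transformations} of functors $B\to\Cat$. In a lax arrow category, by contrast, the objects are strict maps and only the \emph{morphisms} are lax squares. Straightening over $J\op$ is therefore not where the work lies.

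The laxness is in the $(J,j)$-variable. Here $\Cat_{*//}$ must be the lax coslice, with morphisms $(F,\alpha\colon F(j)\to j')$. Your parenthetical ``maps sending object to object'' describes $\Cat_{*/}$, whose fiber over $J$ is $J^\simeq$ rather than $J$. For such a morphism, the two composites $J_{/j}\to\Ar(J')$ send $x\to j$ to $Fx\to Fj$ and to $Fx\to Fj\xrightarrow{\alpha}j'$. These are related only by the non-invertible 2-cell $(\id_{Fx},\alpha)$.

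Consequently, the inclusion is not natural in $(J,j)$, and the slices are not functorial ``by pullback''. These 2-cells are exactly the lax squares that make $P$ land in $\Fun_\lax(\Delta^1,\coCart)$, i.e.\ the oplax cocone structure on $y_J\to\Delta Y_J$. Your construction produces them nowhere.

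The paper obtains them by lax-straightening over $\Cat_{*//}$ itself:
\begin{itemize}
\item $\Cat_{\Delta^1//}\to\Cat_{*//}$ classifies $(J,j)\mapsto J_{/j}$;
\item $\Cat_{\Delta^1//}\times_\Cat\Cat_{*//}$ classifies $\Ar(J)$;
\item $\Cat_{*//}\times_\Cat\Cat_{*//}$ classifies $J$.
\end{itemize}
The canonical map $\Cat_{\Delta^1//}\to\Cat_{\Delta^1//}\times_\Cat\Cat_{*//}$ is the map that genuinely fails to preserve coCartesian edges. It straightens via \cite[Theorem A]{abellan2024straightening} to the lax transformation $J_{/j}\to\Ar(J)$. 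The resulting square gives a functor to $\Fun_\oplax(\Delta^1\times\Delta^1,\Cat)\times_{\Fun_\oplax(\Delta^1,\Cat)}\Cat$. One then checks that it lands in cartesian fibrations and applies $(-)\op$.

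Once $P$ is built this way, your pointwise argument for finite products is fine; the paper does not spell that part out.
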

\begin{proof}
The map $\Cat_{\Delta^1//}\to \Cat_{*//}$ given by precomposition with the inclusion $*\xrightarrow{1}\Delta^1$ is a coCartesian fibration classifying $(J,j)\mapsto J_{/j}$ while the pullback of the composite $\Cat_{\Delta^1//}\to \Cat_{*//}\to \Cat$ along $\Cat_{*//}\to \Cat$ classifies $(J,j)\mapsto \Ar(J)$. 

The canonical map $\Cat_{\Delta^1//}\to \Cat_{\Delta^1//}\times_\Cat \Cat_{*//}$ is a map between coCartesian fibrations over $\Cat_{*//}$ which classifies the lax natural transformation $J_{/j}\to \Ar(J)$ (lax natural in the $(J,j)$ variable) which, for fixed $J$, is a lax colimit lax cocone of the variable $j\in J$. 

We then have the pullback $\Cat_{*//}\times_\Cat \Cat_{*//}$ which classifies $(J,j)\mapsto J$ and the natural map $\Cat_{\Delta^1//}\to \Cat_{*//}$ induces a map $\Cat_{\Delta^1//}\times_{\Cat}\Cat_{*//}\to \Cat_{*//}\times_\Cat \Cat_{*//}$ which classifies the source projection $\Ar(J)\to J$. 

The commutative square 
\[\begin{tikzcd}
	{\Cat_{\Delta^1//}} & {\Cat_{\Delta^1//}\times_{\Cat}\Cat_{*//}} \\
	{\Cat_{*//}\times_\Cat \Cat_{*//}} & {\Cat_{*//}\times_\Cat \Cat_{*//}}
	\arrow[from=1-1, to=1-2]
	\arrow[from=1-1, to=2-1]
	\arrow[from=1-2, to=2-2]
	\arrow[shift right, no head, from=2-1, to=2-2]
	\arrow[shift left, no head, from=2-1, to=2-2]
\end{tikzcd}\]
of maps over $\Cat_{*//}$ between fibrations straightens under \cite[Theorem A]{abellan2024straightening} to the desired functor $\Cat_{*//}\to \Fun_\lax(\Delta^1,\coCart)\times_{\Fun_\lax(\Delta^1,\Cat)}\Cat$, up to an $(-)\op$. More precisely, it lands in $\Fun_{\oplax}(\Delta^1\times\Delta^1,\Cat)\times_{\Fun_\oplax(\Delta^1,\Cat)}\Cat$, but one then simply \emph{checks} that it lands in the appropriate subcategory of cartesian fibrations. We can then apply the $(-)\op$ equivalence between $\cart^{\mathrm{co}}$ and $\coCart$ (which therefore turns the $\oplax$ into a $\lax$) to get the desired functor.
\end{proof}

\begin{cor}
    The lax natural transformation $P_J: J\to \Fun_\lax(\Delta^1,\coCart_{J\op})$ refines to a symmetric monoidal lax natural transformation. In particular for $J=I$ a symmetric monoidal category, it induces a lax symmetric monoidal functor $P_I: \Fun_\lax(\Delta^1,\coCart_{I\op})\simeq \Fun_\lax(\Delta^1,\Fun(I\op,\Cat))$, or equivalently, a symmetric monoidal oplax transformation of functors $y_I\to \Delta Y_I$. 
\end{cor}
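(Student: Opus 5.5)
The plan is to deduce the corollary from \Cref{prop:laxnat} together with \Cref{prop:laxmongro}, by giving $P$ the structure of a map between symmetric monoidal coCartesian fibrations over $\Cat$.

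The first step is to view the functor $P$ of \Cref{prop:laxnat} as coming from the commutative square of fibrations over $\Cat_{*//}$ used in its construction. Every category in sight carries its cartesian symmetric monoidal structure: $\Cat$, $\Cat_{*//}$, $\Cat_{\Delta^1//}$ and the relevant fiber products. All the maps involved (precomposition with $*\xrightarrow{1}\Delta^1$, the source projections, the forgetful maps to $\Cat$) preserve finite products, and \Cref{prop:laxnat} records that $P$ itself does.

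Next I would check that these are symmetric monoidal coCartesian fibrations in the sense of \cite[Definition 1.11]{MonGro}, for the cartesian structures. Concretely, products of coCartesian edges should again be coCartesian. This amounts to the fact that $(J\times J')_{/(j,j')}\simeq J_{/j}\times J'_{/j'}$ and $\Ar(J\times J')\simeq\Ar(J)\times\Ar(J')$, naturally in the pointed categories. For cartesian structures, any product-preserving functor between fibrations is automatically (lax, in fact strong) symmetric monoidal over the base, so the map of fibrations straightening to $P$ is symmetric monoidal over $\Cat_{*//}$.

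Applying \Cref{prop:laxmongro} then upgrades the straightened lax transformation to a symmetric monoidal lax natural transformation. By the second lemma above, for cartesian targets this is equivalent to a lax-cartesian structure, so it suffices to know that the straightened functors are product preserving, which again follows from the product formulas above.

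To pass to a fixed $J$, I would restrict along the fiber inclusion. The fiber of $\Cat_{*//}\to\Cat$ over $J$ is $J$ itself, and $P_J$ is the fiber of $P$ by \Cref{prop:laxnat}. However, a monoidal structure on $J$ is not inherited from products in $\Cat_{*//}$ unless $J$ is symmetric monoidal. So for $J=I$ symmetric monoidal, I would instead pull back along the symmetric monoidal functor $I\to\Cat_{*//}$, $i\mapsto (I,i)$. This functor is lax monoidal: it sends $(i,i')$ to $(I\times I,(i,i'))$, and composing with $\otimes:I\times I\to I$ gives the lax structure maps to $(I,i\otimes i')$. Composing lax symmetric monoidal functors yields $P_I$ as a lax symmetric monoidal functor $I\to\Fun_\lax(\Delta^1,\coCart_{I\op})$. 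Transferring this through straightening $\coCart_{I\op}\simeq\Fun(I\op,\Cat)$, using that $\Fun_\lax(\Delta^1,-)$ corresponds to oplax transformations by \Cref{recoll:Funoplax}, gives the symmetric monoidal oplax transformation $y_I\to\Delta Y_I$.

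The main obstacle I expect is this last coherence. The lax monoidal functor $I\to\Cat_{*//}$ lands over the non-constant object $I^{\times n}$, so one has to push forward along $\otimes:I^{\times n}\to I$ in the fibration over $\Cat$ and check this is compatible with straightening. My first attempt would be to phrase everything over $\CAlg(\Cat)$ rather than $\Cat$, making $J\mapsto P_J$ natural in symmetric monoidal $J$ via the cocartesian pushforward along $\otimes$.
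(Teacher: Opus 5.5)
Your stated plan is the paper's, but the body applies \Cref{prop:laxmongro} at the wrong level. The paper's proof is one line: $P$ lies over $\Cat$ and preserves finite products, so it is a symmetric monoidal functor between the cartesian structures over $\Cat^\times$. \Cref{prop:laxmongro} is then applied with base $\mathcal{B}=\Cat$, $\mathcal{E}_0=\Cat_{*//}$ and $f=P$.

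You instead apply \Cref{prop:laxmongro} to the square of fibrations over $\Cat_{*//}$ from the proof of \Cref{prop:laxnat}. That only re-derives that $P$ is (lax) symmetric monoidal, which \Cref{prop:laxnat} already records. It does not give a symmetric monoidal lax transformation in the variable $J\in\Cat$, which is the first assertion of the corollary. Likewise, your product formulas for $(J\times J')_{/(j,j')}$ and $\Ar(J\times J')$ are inputs to \Cref{prop:laxnat}, not to this statement.

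At the level that matters, you must check that $\Cat_{*//}\to\Cat$ and the target fibration are symmetric monoidal coCartesian fibrations for the cartesian structures. There your criterion ``the straightened functors are product preserving'' would be false. The target straightens to $J\mapsto\Fun_\lax(\Delta^1,\coCart_{J\op})$, which is only lax cartesian, with structure maps given by the external product $\boxtimes$. What is needed instead is that coCartesian pushforward, i.e. left Kan extension, commutes with $\boxtimes$. This holds because $\times$ on $\Cat$ preserves colimits in each variable. With that, your own principle (a product preserving functor between such fibrations is symmetric monoidal over the base), applied to $P$ itself, gives the first assertion.

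For $J=I$, your route is correct and differs from what the paper implicitly does. The paper gets $P_I$ by evaluating the symmetric monoidal lax transformation at the commutative algebra $I\in\CAlg(\Cat)$. You instead compose $P$ with the lax symmetric monoidal fiber inclusion $i\mapsto(I,i)$ and land in the fiber over $I$. This only uses that $P$ is symmetric monoidal over $\Cat^\times$, and bypasses \Cref{prop:laxmongro} for the part of the statement used later.

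The coherence you flag as the main obstacle is standard and needs no detour through $\CAlg(\Cat)$. Let $\mathcal{E}\to\mathcal{B}$ be a symmetric monoidal coCartesian fibration and $I$ a commutative algebra in $\mathcal{B}$. Then:
\begin{itemize}
\item pulling $\mathcal{E}^\otimes\to\mathcal{B}^\otimes$ back along $I$ gives a symmetric monoidal structure on the fiber $\mathcal{E}_I$, whose tensor product is the pushforward along the multiplication of the external product;
\item lax symmetric monoidal functors into $\mathcal{E}$ whose composite to $\mathcal{B}$ is constant at the algebra $I$ are the same as lax symmetric monoidal functors into $\mathcal{E}_I$;
\item functors over $\mathcal{B}$ that do not preserve coCartesian edges restrict to merely lax ones on fibers.
\end{itemize}
Since pushforward here is left Kan extension, the induced structure on $\Fun_\lax(\Delta^1,\Fun(I\op,\Cat))$ is pointwise Day convolution, for which $y_I$ is symmetric monoidal. \Cref{recoll:Funoplax} then turns $P_I$ into the oplax transformation $y_I\to\Delta Y_I$. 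The paper does not spell out this specialization, so on that point your argument is more explicit than the source.
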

\begin{proof}
    The functor $P:\Cat_{*//}\to \Fun_\lax(\Delta^1,\coCart)\times_{\Fun_\lax(\Delta^1,\Cat)}\Cat$ is over $\Cat$ and preserves finite products, and it therefore (uniquely) induces a symmetric monoidal functor between the cartesian symmetric monoidal structures on these categories, over $\Cat^\times$. The result thus follows from \Cref{prop:laxmongro}. 
\end{proof}
\begin{obs}\label{obs:monstr}
    Unwinding the construction, we see that the symmetric monoidal oplax transformation $y_I\to \Delta Y_I$ is \emph{monoidally strict}, in the sense that the oplax commutative squares
    \[\begin{tikzcd}
	{y_I(i)\otimes y_I(j)} & {Y_I\otimes Y_I} \\
	{y_I(i\otimes j)} & {Y_I}
	\arrow[from=1-1, to=1-2]
	\arrow[from=1-1, to=2-1]
	\arrow[from=1-2, to=2-2]
	\arrow[from=2-1, to=2-2]
\end{tikzcd}\] 
commute strictly. 
\end{obs}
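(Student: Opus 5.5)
We have to show that the oplax commutative square in \Cref{obs:monstr} commutes strictly, i.e.\ that its $2$-cell is invertible. I would unwind the construction and reduce this to a statement about coCartesian fibrations over $I\op$.

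\textbf{Step 1: identify the square in unstraightened terms.} Via straightening, $y_I(i)$ is the right fibration $(I_{/i})\op\to I\op$ and $Y_I$ is $s\op:\Ar(I)\op\to I\op$. The tensor product on $\Fun(I\op,\Cat)$ is Day convolution, so it unstraightens as follows: $y_I(i)\otimes y_I(j)$ is the left Kan extension along $\otimes\op: I\op\times I\op\to I\op$ of the external product $(I_{/i})\op\times(I_{/j})\op\to I\op\times I\op$, and similarly for $Y_I\otimes Y_I$.

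The symmetric monoidal structure produced by \Cref{prop:laxmongro} and the corollary after \Cref{prop:laxnat} comes from the finite-product-preserving functor $P$. The monoidality data is therefore the lax symmetric monoidal structure on the map of fibrations $\Cat_{\Delta^1//}\to\Cat_{\Delta^1//}\times_\Cat\Cat_{*//}$, restricted to $J=I$. On fibres this is the map
\[(I_{/i})\op\times(I_{/j})\op\to \Ar(I)\op\times\Ar(I)\op\]
followed by $\otimes$, which lands in the fibre over $i\otimes j$.

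\textbf{Step 2: the 2-cell is a Beck--Chevalley map.} The two composites around the square are two functors $y_I(i)\otimes y_I(j)\to Y_I$. By adjunction, these are maps over $I\op$ out of the Day convolution, so they are the same as maps out of the external product. Going right then down gives $(a\to i,\,b\to j)\mapsto (a\otimes b\to i\otimes j)$, pushed along the multiplication of $Y_I$. Going down then right gives the product in $y_I(i\otimes j)$, namely $(a\otimes b\to i\otimes j)$, followed by the inclusion $y_I(i\otimes j)\to Y_I$.

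Both composites produce the same object of $\Ar(I)\op$, because the multiplication $Y_I\otimes Y_I\to Y_I$ is itself induced fibrewise by $\otimes:\Ar(I)\times\Ar(I)\to\Ar(I)$. The $2$-cell is the canonical comparison between these two descriptions, and it is assembled from the coCartesian transports in $\Cat_{\Delta^1//}\to\Cat_{*//}$.

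\textbf{Step 3: show the comparison is invertible.} This is the main obstacle. The key point is that the map $(J,j)\mapsto (J_{/j}\to\Ar(J))$ is natural with respect to product-preserving data. Its lax naturality defect lives only in the $j$-variable, through the cocone structure along morphisms $j\to j'$. It does not live in the monoidal direction, which only involves the functors $\otimes: I\times I\to I$ and the identity objects.

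Concretely, I would check that the symmetric monoidal structure maps of the straightened transformation arise from \emph{coCartesian} edges of the fibrations over $\Cat_{*//}^\times$, namely the product maps $(I\times I,(i,j))\to(I,i\otimes j)$. The map $\Cat_{\Delta^1//}\to\Cat_{\Delta^1//}\times_\Cat\Cat_{*//}$ preserves coCartesian edges lying over such maps: both sides send the edge over $(F,\phi)$ to postcomposition with $F$ on slices and arrows, and the square involved is a strict pullback square in $\Cat$. Lax straightening sends coCartesian-edge-preserving components to invertible $2$-cells, which is exactly the strict naturality part of \cite[Theorem A]{abellan2024straightening}. The monoidality $2$-cells are therefore invertible, and the unit square is handled identically.
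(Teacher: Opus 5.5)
Your proposal is correct and follows the paper's own reasoning. The paper justifies the observation only by ``unwinding the construction'', and your Step 3 carries out exactly that unwinding: the only source of laxness is the component $\phi$ of a morphism $(F,\phi)$ in $\Cat_{*//}$, and $\phi$ is the identity on the structure morphisms $(I\times I,(i,j))\to(I,i\otimes j)$ and $(\pt,\pt)\to(I,\one)$, so $\Cat_{\Delta^1//}\to\Cat_{\Delta^1//}\times_\Cat\Cat_{*//}$ preserves coCartesian edges over them and lax straightening yields invertible $2$-cells (two small slips, neither affecting the argument: $(I_{/i})\op\to I\op$ is a left fibration, and describing transport in $\Cat_{\Delta^1//}$ as ``postcomposition with $F$'' is valid only when $\phi$ is invertible, which is the only case you need).
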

\begin{obs}
    By construction, the underlying oplax cocone $y_I\to \Delta Y_I$ is an oplax colimit cocone. 
\end{obs}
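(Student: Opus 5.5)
The plan is to forget the symmetric monoidal structure entirely and check the universal property for the underlying oplax transformation $y_I\to \Delta Y_I$. Monoidality plays no role in whether a cocone is a colimit cocone. By construction (\Cref{prop:laxnat} specialized to $J=I$), this transformation is the straightening of the map of coCartesian fibrations over $I\op$
$$(I_{/j})\op\longrightarrow \Ar(I)\op,$$
lax natural in $j$, followed by the $(-)\op$-equivalence between $\cart^{\mathrm{co}}$ and $\coCart$. So it suffices to prove the following: for any small category $J$, the lax cocone $j\mapsto \big((J_{/j})\op\to\Ar(J)\op\big)$ exhibits $\Ar(J)\op\to J\op$ as the appropriate (op)lax colimit in $\coCart_{J\op}\simeq\Fun(J\op,\Cat)$ of the Yoneda embedding.

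First, I would use the dual of \Cref{lm:oplaxcolimsinlax} (oplax colimits in functor categories are computed pointwise) to reduce to a pointwise statement at each $k\in J$. Since the evaluation functors are jointly conservative, it is enough to check the cocone at each $k$. Evaluated at $k$, the diagram is $j\mapsto \Map_J(k,j)$, a functor $J\to\Cat$ landing in anima. The cocone leg at $j$ is the map on fibers over $k$, namely the inclusion $\Map_J(k,j)\simeq (J_{/j})\op\times_{J\op}\{k\}\to \Ar(J)\op\times_{J\op}\{k\}\simeq (J_{k/})\op$. The $2$-cells come from the morphisms $(k\to j)\to(k\to j\to j')$ in $\Ar(J)$.

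Second, I would invoke the standard fact that the oplax colimit in $\Cat$ of a functor $F:J\to\Cat$ is its coCartesian unstraightening, with the cocone given by the fiber inclusions. Lax colimits correspond instead to the opposite-of-cartesian version; this is the convention that produces the $(-)\op$. For $F=\Map_J(k,-)$, the unstraightening is the left fibration $J_{k/}\to J$. Under the convention the paper uses, this becomes $(J_{k/})\op$, and the cocone consists exactly of the fiber inclusions identified above, together with the $2$-cells given by the coCartesian edges. This matches the description of $Y_J$ and recovers the computation already used in \Cref{prop:incladj}.

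The main obstacle is bookkeeping rather than mathematics. One has to make sure that the identification holds as an oplax cocone, with the correct $2$-cells and the correct variance, and not merely objectwise. To handle this, I would compare the two cocones through their universal properties. Maps out of $\Ar(J)\op$ into a coCartesian fibration $E$ over $J\op$ correspond, by \cite[Theorem A]{abellan2024straightening}, to functors over $J\op$. Such functors restrict along the fiber inclusions $(J_{/j})\op\to\Ar(J)\op$, and these are precisely the data of oplax cocones from $y_J$ to $\mathrm{St}(E)$. This restriction is an equivalence because $\Ar(J)\op\to J\op\times J\op$ is the (co)Cartesian unstraightening of $\Map_J$. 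That last step is the one I would check most carefully.
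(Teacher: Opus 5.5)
Your proposal is correct in substance. It rests on the same identification the paper uses, but spells it out.

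The paper's argument is only ``by construction''. In the proof of \Cref{prop:laxnat}, the maps $J_{/j}\to\Ar(J)$ are by design the lax colimit cocone of $j\mapsto J_{/j}$, namely the fiber inclusions into the coCartesian unstraightening $t\colon\Ar(J)\to J$. Applying $(-)\op$ turns this into the oplax cocone $y_J\to\Delta Y_J$. This is the identification $\colim^\oplax y_J\simeq Y_J$ that is already used in \Cref{prop:incladj} and \Cref{ex:univonI}.

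You instead evaluate at each $k$ and apply the Grothendieck-construction description to $\Map_J(k,-)$. This actually checks universality in $\Fun(J\op,\Cat)$, and not only in $\Cat$, which the paper treats as standard. For the pointwise reduction, the fact you need is that weighted colimits in the strict functor $2$-category $\Fun(J\op,\Cat)$ are computed pointwise. \Cref{lm:oplaxcolimsinlax} concerns $\Fun_\lax(K,\B)$ and does not literally apply.

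The point to fix is the variance, which is exactly what you identify as the main obstacle. Your ``standard fact'' has lax and oplax reversed relative to the paper.
\begin{itemize}
\item In the paper, the coCartesian unstraightening with its fiber inclusions is the \emph{lax} colimit; this is what the proof of \Cref{prop:laxnat} asserts.
\item An oplax cocone has $2$-cells $p_{j'}\circ F(f)\Rightarrow p_j$ for $f\colon j\to j'$, as in the proof of \Cref{cor:mapformula}.
\item Hence the oplax colimit of $F\colon J\to\Cat$ is the total space of the cartesian fibration over $J\op$ classifying $F$.
\item For $F=\Map_J(k,-)$ this is $(J_{k/})\op$ directly, with $2$-cells given by the morphisms $(k\to j\to j')\to(k\to j)$.
\end{itemize}
Stated this way, you never need to switch conventions midway.

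Your last paragraph needs a similar correction. Maps in $\coCart_{J\op}$ out of $\Ar(J)\op\simeq\Ar(J\op)\xrightarrow{t}J\op$ are sections of $E$, because this is the free coCartesian fibration on $\id_{J\op}$. They are not arbitrary functors over $J\op$; by the lax straightening theorem you cite, those would correspond to lax transformations. By the Yoneda lemma, oplax cocones $y_J\Rightarrow\Delta\,\mathrm{St}(E)$ are also sections of $E$. That gives the comparison you want, with no need for the two-sided fibration $\Ar(J)\op\to J\op\times J\op$.
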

This deals with the universal example. We deduce the following: 
\begin{cor}\label{cor:consgpdcoco}
Fix $I$ a symmetric monoidal category. 
    There is a natural transformation in the $2$-cocompletely symmetric monoidal $2$-category $\B$ $$\Map^{\otimes-\lax}(I,\B)\to \CAlg(\B)^\simeq \times_{\Map^{\otimes-\lax}(I,\B)} \Map(\Delta^1,\Fun^{\otimes-\lax}_\oplax(I,\B)) $$ fitting into a commutative square: 
    \[\begin{tikzcd}
	{\Map^{\otimes-\lax}(I,\B)} & {\CAlg(\B)^\simeq \times_{\Map^{\otimes-\lax}(I,\B)} \Map(\Delta^1,\Fun^{\otimes-\lax}_\oplax(I,\B))} \\
	{\Map(I,\B)} & {\B^\simeq\times_{\Map(I,\B)}\Map(\Delta^1,\Fun_\oplax(I,\B))}
	\arrow[from=1-1, to=1-2]
	\arrow[from=1-1, to=2-1]
	\arrow[from=1-2, to=2-2]
	\arrow["{\colim^\oplax_I}"', from=2-1, to=2-2]
\end{tikzcd}\]
where the bottom horizontal arrow is the oplax colimit functor together with the associated unit map. 
\end{cor}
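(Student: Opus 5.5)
The plan is to reduce to the universal example, where everything was built by hand just above, and then transport along the essentially unique $2$-cocontinuous extension of a given diagram. Fix an anima-point of $\Map^{\otimes-\lax}(I,\B)$, i.e. a lax symmetric monoidal functor $F:I\to\B$. Since $\B$ is $2$-cocompletely symmetric monoidal, hence tensored over $\PrL$ as in \Cref{cor:Prtensor}, $F$ extends (via $\PrL$-linearized left Kan extension along the Yoneda embedding) to a $2$-colimit-preserving functor $\overline F:\Fun(I\op,\PrL)\to\B$ with $\overline F\circ Y\simeq F$. I would then check that $\overline F$ is lax symmetric monoidal (Day convolution being the free cocontinuous extension of the tensor on $I$, and $\otimes_\B$ commuting with $2$-colimits in each variable). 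The assignment $F\mapsto\overline F$ should be natural in $F$ as a functor out of the anima $\Map^{\otimes-\lax}(I,\B)$, because it is an inverse of the restriction equivalence between cocontinuous lax monoidal functors out of Day convolution and lax monoidal functors out of $I$.

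Next, apply $\overline F$ to the universal data. By the preceding corollary, the universal example carries a symmetric monoidal oplax transformation $y_I\to\Delta Y_I$, and by Observation~\ref{obs:monstr} it is monoidally strict. Composing in $\Cat$ with $\PrL$-presheaves, this gives a commutative algebra $\An[Y_I]$ and a symmetric monoidal oplax cocone $Y\to\Delta\An[Y_I]$ in $\Fun(I\op,\PrL)$. Applying the lax monoidal, $2$-cocontinuous $\overline F$ yields a commutative algebra $\overline F(\An[Y_I])\in\CAlg(\B)$ together with a lax-symmetric-monoidal oplax cocone $F\simeq\overline F Y\to\Delta\overline F(\An[Y_I])$, that is, a point of $\Map(\Delta^1,\Fun^{\otimes-\lax}_\oplax(I,\B))$ whose target is constant at a commutative algebra. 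This is exactly the datum in the upper right corner, and by construction it is natural in $F$.

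For commutativity of the square, forgetting the monoidal structures gives $\overline F$ applied to the underlying oplax cocone $Y\to\Delta\An[Y_I]$. By the final observation this is an oplax colimit cocone in the universal case, and $\overline F$ preserves oplax colimits since it is $2$-cocontinuous and oplax colimits are weighted colimits. So the image is the oplax colimit cocone of $F$, which identifies the underlying map with $\colim^\oplax_I$ together with its unit.

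The step I expect to be the main obstacle is making the lax symmetric monoidal structure on $\overline F$ coherent and natural in $F$. If a direct Day-convolution argument is unavailable at the $(\infty,2)$-level, I would try this first: observe that the universal cocone is monoidally strict, so only lax monoidality of $\overline F$ on the image of $Y$ and on the constant object is used, and then verify the compatibility $\overline F(\An[Y_I])^{\otimes 2}\to\overline F(\An[Y_I]^{\otimes2})$ using the colimit-compatibility of $\otimes_\B$ from Definition~\ref{defn:weak2prl}.
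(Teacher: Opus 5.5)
\textbf{Gap: the $\PrL$-tensoring is not available.} Your reduction rests on the claim that a $2$-cocompletely symmetric monoidal $\B$ is tensored over $\PrL$, so that $F$ extends to $\overline F:\Fun(I\op,\PrL)\to\B$. This does not follow. \Cref{cor:Prtensor} needs $\B$ to be \emph{strongly locally cocomplete}, and that is not among the hypotheses here.

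For example, $\Cat$, or the universal example $\Fun(I\op,\Cat)$ itself, is $2$-cocompletely symmetric monoidal. But its hom-categories are arbitrary small categories (e.g.\ $\Hom(y_I(i),Q)\simeq Q(i)$), so no $\PrL$-linear extension $\overline F$ exists there.

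This generality is genuinely used later in the appendix:
\begin{itemize}
\item \Cref{cor:transgpd} extends the statement by embedding $2$-categories that only have oplax colimits over powers of $I$ into $2$-cocompletely symmetric monoidal ones.
\item \Cref{cor:consC} applies it to $\Fun_\lax(K,\B)$.
\item The counit construction uses the $\Cat$-action $K\mapsto K\otimes A$ on $\B$.
\end{itemize}
The ambient $2$-categories in these steps need not be locally cocomplete. So your argument only proves the strongly locally cocomplete case, where it is essentially the paper's argument run through $\Fun(I\op,\PrL)$ instead of $\Fun(I\op,\Cat)$.

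\textbf{The repair, which is the paper's proof.} Use $\Cat$-weights instead of $\PrL$-weights. $\Fun(I\op,\Cat)$ with Day convolution is the free $2$-cocompletely symmetric monoidal $2$-category on $I$. That is, restriction along $y_I$ identifies lax symmetric monoidal $2$-cocontinuous functors $\Fun(I\op,\Cat)\to\B$ with $\Map^{\otimes-\lax}(I,\B)$, naturally in $\B$. Hence the source functor is corepresentable.

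By the Yoneda lemma, a natural transformation out of it amounts to a single element of the target at $\B=\Fun(I\op,\Cat)$ lying over $y_I$. That element is $Y_I$ together with the symmetric monoidal oplax cocone $y_I\to\Delta Y_I$ from the preceding corollary. The paper's proof consists of exactly this.

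With this change:
\begin{itemize}
\item the detour through $\An[-]$ disappears;
\item commutativity of the square follows from the final observation together with $2$-cocontinuity;
\item naturality in $\B$ is automatic. This is what ``natural transformation in $\B$'' asks for, and your write-up only discusses naturality in $F$.
\end{itemize}

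\textbf{On your fallback.} Checking lax monoidality of $\overline F$ only on specific objects and the specific map $\overline F(\An[Y_I])^{\otimes 2}\to\overline F(\An[Y_I]^{\otimes 2})$ would not yield coherent $\infty$-categorical data. That coherence is precisely what the corepresentability (universal property of Day convolution) supplies.
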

\begin{proof}
    By the Yoneda lemma, it suffices to produce an element in $$\CAlg(\Fun(I\op,\Cat))\times_{\Map^{\otimes-\lax}(I,\Fun(I\op,\Cat))} \Map(\Delta^1,\Fun^{\otimes-\lax}_\oplax(I,\Fun(I\op,\Cat)))$$ lifting the oplax colimit over $I$ of the Yoneda embedding and the unit map. 

    We did this in the previous corollary. 
\end{proof}

It is conceptually better to generalize this to symmetric monoidal $2$-categories $\B$ that only admit oplax colimits indexed by powers of $I$, but it will also be helpful for our construction. We therefore point out that this actually follows from the above corollary by embedding our categories symmetric monoidally and oplax-colimit-over-powers-of-$I$-preservingly into $2$-cocompletely symmetric monoidal $2$-categories:
 \begin{cor}\label{cor:transgpd}
     Fix $I$ a symmetric monoidal category. 
    The natural transformation 
    
    $$\Map^{\otimes-\lax}(I,\B)\to \CAlg(\B)^\simeq \times_{\Map^{\otimes-\lax}(I,\B)} \Map(\Delta^1,\Fun^{\otimes-\lax}_\oplax(I,\B)) $$ from \Cref{cor:consgpdcoco}, as well as the commutative square following it, extend to the category of symmetric monoidal $2$-categories with oplax colimits indexed by finite powers of $I$ compatible with the tensor product. 
    \end{cor}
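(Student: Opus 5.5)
We prove \Cref{cor:transgpd} by reducing to \Cref{cor:consgpdcoco}, embedding a general $\B$ into a $2$-cocompletely symmetric monoidal $2$-category in a way that preserves the relevant oplax colimits and the tensor product. Let $\B$ be a symmetric monoidal $2$-category with oplax colimits indexed by finite powers of $I$, compatible with $\otimes$. We take the (large) $2$-category $\widehat{\B}$ of presheaves $\Fun_2(\B\op,\widehat{\Cat})$, or rather the full sub-$2$-category of it obtained by localizing at the oplax colimits indexed by $I^n$ which we want preserved. This is a $2$-categorical analogue of \cite[Section 5.3.6]{HTT}, applied to the class $\mathcal{K}=\{I^n\}_n$ with oplax colimits. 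We equip it with the Day convolution structure.

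\textbf{Step 1: the cocompletion.} We construct the sub-$2$-category $\mathcal{P}_\mathcal{K}(\B)\subset\Fun_2(\B\op,\widehat{\Cat})$ of presheaves sending the chosen oplax colimit cocones in $\B$ to lax limit cones. We check three things:
\begin{enumerate}
    \item it is a reflective localization, hence $2$-cocomplete;
    \item the Yoneda embedding $j:\B\to\mathcal{P}_\mathcal{K}(\B)$ is fully faithful and preserves the $I^n$-indexed oplax colimits;
    \item Day convolution descends to a symmetric monoidal structure on the localization commuting with $2$-colimits in each variable, with $j$ symmetric monoidal.
\end{enumerate}
Item 3 is exactly where the hypothesis that $\otimes$ is compatible with these oplax colimits enters: it guarantees that the local equivalences are stable under tensoring with representables, and hence, by colimit-closure, with everything. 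All of this is functorial in $\B$ for functors preserving the structure.

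\textbf{Step 2: transport along $j$.} Given a lax symmetric monoidal $F:I\to\B$, we apply \Cref{cor:consgpdcoco} to $j\circ F$. This yields a commutative algebra structure on $\colim^\oplax_I(jF)$ together with a symmetric monoidal oplax cocone. Since $j$ preserves the oplax colimit over $I$, we have $\colim^\oplax_I(jF)\simeq j(\colim^\oplax_I F)$. Because $j$ is symmetric monoidal and fully faithful (locally an equivalence), the full sub-$2$-category on its essential image is equivalent to $\B$ as a symmetric monoidal $2$-category. The algebra structure and the symmetric monoidal oplax cocone therefore lie in $\B$, because:
\begin{itemize}
    \item the multiplication $j(C)^{\otimes 2}\simeq j(C^{\otimes 2})\to j(C)$ and its higher coherences involve only tensor powers of objects in the image;
    \item lifting $2$-cells is possible by local fully faithfulness.
\end{itemize}
This defines the desired map of anima for $\B$, lying over the bottom row of the square, since that row is compatible with $j$.

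\textbf{Step 3: naturality.} For a functor $g:\B\to\B'$ preserving the structure, we get an induced left Kan extension $\mathcal{P}_\mathcal{K}(\B)\to\mathcal{P}_\mathcal{K}(\B')$, which is $2$-colimit preserving and symmetric monoidal, and which commutes with the Yoneda embeddings. Naturality of the transformation from \Cref{cor:consgpdcoco} among $2$-cocompletely symmetric monoidal $2$-categories then gives naturality in $\B$. Restricted to $2$-cocomplete $\B$, the new transformation agrees with the old one: there $j$ has a symmetric monoidal left adjoint (colimit extension) compatible with everything, so the two constructions coincide.

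\textbf{Main obstacle.} The hard part is Step 1: setting up the $2$-categorical localized presheaf construction with a Day convolution structure, in a way functorial in $\B$, given the limited $(\infty,2)$-categorical technology. If that is too heavy, we would instead work directly in $\Fun_2(\B\op,\widehat{\Cat})$ with Day convolution, where $\otimes$ already commutes with colimits. In that setting we use only that $j$ preserves the specific oplax colimits $\colim^\oplax_{I^n}$ after one applies the reflection, checking compatibility with tensors by hand on representables.
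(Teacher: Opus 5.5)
Your proposal is correct and follows the paper's argument: the paper justifies this corollary in a single sentence, by embedding $\B$ symmetric monoidally, and preserving oplax colimits indexed by finite powers of $I$, into a $2$-cocompletely symmetric monoidal $2$-category and then invoking \Cref{cor:consgpdcoco}. You supply a concrete such embedding: presheaves localized at the $I^n$-indexed oplax colimit cocones, with localized Day convolution, and the compatibility of this localization with Day convolution is exactly where the tensor-compatibility hypothesis enters. You also give the transport-back and naturality details that the paper leaves implicit.
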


    We now extend this to a functor on $\Fun^{\otimes-\lax}_\oplax(I,\B)$ rather than on its groupoid core by using the following observation, which was obtained during a conversation with Thorger Gei{\ss}: 
    \begin{lm}\label{lm:oplaxcolimsinlax}
    Let $\B$ be a $2$-category admitting $J$-indexed oplax colimits and $K$ a small $2$-category. In this case, $\Fun_\lax(K,\B)$ admits $J$-indexed oplax colimits, and they are underlying in the sense that evaluation functors $\Fun_\lax(K,\B)\to \B$ preserve them. 
\end{lm}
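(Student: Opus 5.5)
The plan is to avoid constructing the pointwise oplax colimit by hand. Instead I would produce it as a left adjoint obtained by applying the $2$-functor $\Fun_\lax(K,-)$ to the oplax colimit adjunction in $\B$.

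First, the hypothesis that $\B$ admits $J$-indexed oplax colimits means the following. The diagonal $2$-functor $\Delta:\B\to\Fun_\oplax(J,\B)$ admits a left adjoint $\colim^\oplax_J$ in the $(\infty,2)$-category of $2$-categories. This holds because the defining equivalences $\Hom_\B(\colim^\oplax_J F,b)\simeq\Hom_{\Fun_\oplax(J,\B)}(F,\Delta b)$ are equivalences of hom-categories, natural in $b$, so the pointwise left adjoints assemble into a $2$-adjunction. I would cite \cite{haugseng} for the passage from pointwise representability to adjunctions internal to $2$-categories.

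Second, I apply the $2$-functor $\Fun_\lax(K,-)$ (postcomposition). Since it is a $2$-functor, it preserves adjunctions, so $\Fun_\lax(K,\Delta)$ acquires a left adjoint given by postcomposition with $\colim^\oplax_J$. The key step is then the identification
$$\Fun_\lax(K,\Fun_\oplax(J,\B))\simeq\Fun_\oplax(J,\Fun_\lax(K,\B)),$$
under which $\Fun_\lax(K,\Delta)$ corresponds to the diagonal of $\Fun_\lax(K,\B)$. This is the usual interaction of lax and oplax functor categories already invoked earlier in the appendix. By the Yoneda lemma it amounts to
$$\Map(L,\Fun_\lax(K,\Fun_\oplax(J,\B)))\simeq\Map(L,\Fun_\oplax(J,\Fun_\lax(K,\B))),$$
naturally in $L$. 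Both sides are maps out of an iterated Gray tensor product of $L$, $K$ and $J$ into $\B$. The two sides agree because the Gray tensor product is associative and because swapping the order of the two Gray factors turns "lax" into "oplax", as in the proof of \Cref{prop:laxstrict}. I expect this bookkeeping of orientations to be the main obstacle: one must check that the identification really matches the two diagonals and does not secretly produce $\Fun_\lax(J,-)$. I would handle this by testing on $J=K=\Delta^1$, where a single square with a $2$-cell makes the orientation explicit, and then using naturality.

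Granting this, $\Fun_\lax(K,\B)$ has a left adjoint to its diagonal, that is, it admits $J$-indexed oplax colimits. Explicitly, the oplax colimit of $F:J\to\Fun_\lax(K,\B)$ is the lax functor $k\mapsto\colim^\oplax_J F(-)(k)$. Finally, for the claim about evaluation functors: for each $k\in K$, evaluation $\ev_k$ intertwines the two diagonals. By construction, the unit and counit of the transported adjunction are computed by applying the unit and counit of the adjunction in $\B$ at each $k$. Hence the Beck--Chevalley map $\colim^\oplax_J(\ev_k\circ F)\to\ev_k(\colim^\oplax_J F)$ is the identity, so evaluation preserves these oplax colimits.
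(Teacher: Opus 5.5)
Your proposal is correct and is essentially the paper's proof: apply the $2$-functor $\Fun_\lax(K,-)$ to the adjunction $\colim^\oplax_J\dashv\Delta$ and use the identification $\Fun_\lax(K,\Fun_\oplax(J,\B))\simeq\Fun_\oplax(J,\Fun_\lax(K,\B))$, which matches the two diagonals. The paper leaves that interchange and the compatibility with $\ev_k$ implicit, and your Gray-tensor justification and naturality-in-$K$ argument fill them in correctly. Two small points: associativity of the Gray tensor product alone suffices, since $K$ and $J$ already sit on opposite sides, and the resulting oplax colimit is an honest functor $K\to\B$ rather than a lax functor.
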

\begin{proof}
    Apply the $2$-functor $\Fun_\lax(K,-)$ to the oplax colimit/diagonal adjunction $\Fun_\oplax(J,\B)\rightleftharpoons \B $, and use $\Fun_\lax(K,\Fun_\oplax(J,\B))\simeq \Fun_\oplax(J,\Fun_\lax(K,\B))$. 
\end{proof}

In particular, we obtain: 
\begin{cor}
    Let $\B$ be a symmetric monoidal $2$-category admitting oplax colimits indexed by finite powers of $J$ compatible with the tensor product. In this case, for any small category $K$, the $2$-category $\Fun_\lax(K,\B)$ is also a symmetric monoidal $2$-category admitting oplax colimits indexed by finite powers of $J$ compatible with the tensor product, and these oplax colimits are preserved by precomposition with any functor $K_0\to K_1$. 
\end{cor}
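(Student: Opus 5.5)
The plan is to deduce everything from \Cref{lm:oplaxcolimsinlax} applied to the finite powers $J^n$, and then to check the monoidal compatibility pointwise.

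\emph{Step 1: the symmetric monoidal structure.} Equip $\Fun_\lax(K,\B)$ with the pointwise tensor product, i.e.\ $(F\otimes G)(k)=F(k)\otimes G(k)$. This is the standard symmetric monoidal structure induced on a lax functor $2$-category from one on $\B$, obtained by applying the product-preserving $2$-functor $\Fun_\lax(K,-)$ to the commutative monoid $\B$ in $2$-categories. Precomposition along any $K_0\to K_1$ is then symmetric monoidal, because it commutes with evaluations.

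\emph{Step 2: existence of the oplax colimits.} By \Cref{lm:oplaxcolimsinlax}, applied with $J^n$ in place of $J$ for each $n$, $\Fun_\lax(K,\B)$ admits oplax colimits indexed by every finite power of $J$. These colimits are underlying: each evaluation functor $\ev_k:\Fun_\lax(K,\B)\to\B$ preserves them.

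\emph{Step 3: preservation by precomposition.} Let $\phi:K_0\to K_1$. The composite $\ev_k\circ\phi^*$ equals $\ev_{\phi(k)}$, and this evaluation preserves the oplax colimits by Step 2. It remains to see that the evaluations $\ev_k$ jointly detect oplax colimit cocones in $\Fun_\lax(K_0,\B)$, so that $\phi^*$ of the colimit cocone is again a colimit cocone. For this we use the construction in the proof of \Cref{lm:oplaxcolimsinlax}: the left adjoint $\Fun_\lax(K,\colim^\oplax)$ to the diagonal is natural in $K$ with respect to restriction. Concretely, the equivalence $\Fun_\lax(K,\Fun_\oplax(J,\B))\simeq\Fun_\oplax(J,\Fun_\lax(K,\B))$ is natural in $K$, as is the $2$-functor $\Fun_\lax(-,\colim^\oplax)$. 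Hence $\phi^*$ commutes with the left adjoints, and the Beck--Chevalley map is an equivalence.

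\emph{Step 4: compatibility with the tensor product.} We need the canonical map $\colim^\oplax_{J^n}(F\otimes G)\to F\otimes\colim^\oplax_{J^n}G$ (for $F$ fixed) to be an equivalence in $\Fun_\lax(K,\B)$, where $F\otimes G$ denotes the pointwise tensor. Equivalences in $\Fun_\lax(K,\B)$ are detected pointwise. Both the oplax colimits and the tensor product are computed pointwise, so at each $k$ this map becomes the corresponding map in $\B$, which is an equivalence by hypothesis. The same argument covers the external form $\colim^\oplax_{J^n\times J^m}(F\boxtimes G)\simeq\colim^\oplax_{J^n}F\otimes\colim^\oplax_{J^m}G$.

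The main obstacle is the naturality in $K$ in Step 3, and more generally the identification in Step 4 of the Beck--Chevalley comparison with its pointwise version. I expect this to be formal from the naturality of the equivalence $\Fun_\lax(K,\Fun_\oplax(J,\B))\simeq\Fun_\oplax(J,\Fun_\lax(K,\B))$ in both variables. I would check it first in the homotopy $2$-category, which suffices because conservativity of the evaluations reduces the question to a property rather than extra structure.
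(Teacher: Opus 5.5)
\paragraph{Overall.} Your Steps 1--3 follow the paper. The paper records this statement as an immediate consequence of \Cref{lm:oplaxcolimsinlax}: everything comes from applying the $2$-functor $\Fun_\lax(K,-)$, naturally in $K$, to the corresponding structure on $\B$. Your naturality-in-$K$ argument in Step~3 is exactly why restriction along $K_0\to K_1$ preserves these colimits. The route you first announce in Step~3, that the evaluations jointly detect colimit cocones, would not work: for $J=\ast$ it is precisely the conservativity claim discussed below. The argument you actually give does not need it.

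\paragraph{The gap.} The problem is in Step~4 and in your closing remark. Equivalences in $\Fun_\lax(K,\B)$ are \emph{not} detected pointwise, i.e.\ the evaluations are not jointly conservative. A morphism there is a lax transformation, and it is an equivalence only if its components are equivalences \emph{and} its lax naturality $2$-cells are invertible. For example, take $K=\Delta^1$ and $\B=\Cat$. The two functors $\Delta^0\to\Delta^1$ picking out $0$ and $1$ are joined, in the appropriate direction, by a lax transformation with identity components whose $2$-cell is the non-invertible arrow $0\to 1$. This transformation is not an equivalence. So checking the comparison map $\colim^\oplax_{J^n}(F\otimes G)\to F\otimes\colim^\oplax_{J^n}G$ at each $k$ does not suffice as written. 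For the same reason, your last paragraph's reduction to a ``property detected by the evaluations'' is unjustified.

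\paragraph{The repair.} Write $G$ as a functor $\tilde G\colon K\to\Fun_\oplax(J^n,\B)$. The comparison map is then the whiskering, by $(F,\tilde G)\colon K\to\B\times\Fun_\oplax(J^n,\B)$, of the Beck--Chevalley transformation $\colim^\oplax_{J^n}(b\otimes G')\to b\otimes\colim^\oplax_{J^n}G'$. By hypothesis, the latter is a natural equivalence of $2$-functors $\B\times\Fun_\oplax(J^n,\B)\to\B$. A whiskered natural transformation has invertible naturality $2$-cells, and for such strong transformations pointwise invertibility does imply invertibility.

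Equivalently, and closer to the paper, apply $\Fun_\lax(K,-)$ to this natural equivalence, just as you applied it to the adjunction in Step~3. Since the adjunction on $\Fun_\lax(K,\B)$ is itself the image of the one on $\B$, the resulting equivalence is the Beck--Chevalley map in $\Fun_\lax(K,\B)$. The same argument handles the external form $\colim^\oplax_{J^n\times J^m}(F\boxtimes G)\simeq\colim^\oplax_{J^n}F\otimes\colim^\oplax_{J^m}G$.
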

We can now produce the desired functor equipped with a lift of the unit: 
\begin{cor}\label{cor:consC}
    Fix $I$ a symmetric monoidal category. 
    There is a natural transformation in the $2$-cocompletely symmetric monoidal $2$-category $\B$ $$\Fun_\oplax^{\otimes-\lax}(I,\B)\to \CAlg(\B) \times_{\Fun_\oplax^{\otimes-\lax}(I,\B)} \Fun(\Delta^1,\Fun^{\otimes-\lax}_\oplax(I,\B)) $$ fitting into a commutative square: 
    \[\begin{tikzcd}
	{\Fun_\oplax^{\otimes-\lax}(I,\B)} & {\CAlg(\B)\times_{\Fun_\oplax^{\otimes-\lax}(I,\B)} \Fun(\Delta^1,\Fun^{\otimes-\lax}_\oplax(I,\B))} \\
	{\Fun_\oplax(I,\B)} & {\B\times_{\Fun_\oplax(I,\B)}\Fun(\Delta^1,\Fun_\oplax(I,\B))}
	\arrow[from=1-1, to=1-2]
	\arrow[from=1-1, to=2-1]
	\arrow[from=1-2, to=2-2]
	\arrow["{\colim^\oplax_I}"', from=2-1, to=2-2]
\end{tikzcd}\]
where the bottom horizontal arrow is the oplax colimit functor together with the associated unit map. 
\end{cor}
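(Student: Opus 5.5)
The plan is to upgrade \Cref{cor:transgpd} from groupoid cores to the full $1$-categories by a Yoneda argument, using the representing description of $\Fun^{\otimes-\lax}_\oplax(I,\B)$ from \Cref{recoll:Funoplax}. For a small category $K$, a $K$-point of $\Fun^{\otimes-\lax}_\oplax(I,\B)$ is a point of $\Map^{\otimes-\lax}(I,\Fun_\lax(K,\B))$. By \Cref{lm:oplaxcolimsinlax} and the corollary following it, $\Fun_\lax(K,\B)$ is again a symmetric monoidal $2$-category with oplax colimits indexed by finite powers of $I$, compatible with $\otimes$. These colimits are computed pointwise and are preserved by restriction along any $K_0\to K_1$. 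So we may apply the groupoid-level transformation of \Cref{cor:transgpd} to $\Fun_\lax(K,\B)$ in place of $\B$. This gives, naturally in $K$, a map
\[\Map^{\otimes-\lax}(I,\Fun_\lax(K,\B))\to \CAlg(\Fun_\lax(K,\B))^\simeq\times_{\Map^{\otimes-\lax}(I,\Fun_\lax(K,\B))}\Map(\Delta^1,\Fun^{\otimes-\lax}_\oplax(I,\Fun_\lax(K,\B))).\]
Naturality in $K$ is exactly what \Cref{cor:transgpd} provides: its transformation is natural with respect to symmetric monoidal functors preserving the relevant oplax colimits, and restriction along $K_0\to K_1$ is such a functor.

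The next step is to identify each term on the right with $\Map(K,-)$ of the corresponding term in the statement. For the first term, $\CAlg(\Fun_\lax(K,\B))^\simeq\simeq\Map(K,\CAlg(\B))$. The point is that the symmetric monoidal structure on $\Fun_\lax(K,\B)$ is pointwise, and commutative algebras only see the underlying $1$-categorical data; the lax $2$-cells play no role for $\CAlg$, since it is viewed as a $1$-category. For the second term, we use $\Fun^{\otimes-\lax}_\oplax(I,\Fun_\lax(K,\B))\simeq\Fun_\lax(K,\Fun^{\otimes-\lax}_\oplax(I,\B))$, the lax/oplax interchange already used in the proof of \Cref{lm:oplaxcolimsinlax}, now carried out with symmetric monoidal lax functors. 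Taking maps out of $\Delta^1$ and groupoid cores, the fiber product becomes $\Map(K,\CAlg(\B)\times_{\Fun^{\otimes-\lax}_\oplax(I,\B)}\Fun(\Delta^1,\Fun^{\otimes-\lax}_\oplax(I,\B)))$, using that $\Map(K,-)$ preserves pullbacks.

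The Yoneda lemma in $K$ then produces the desired functor. Running the same argument for the bottom row, with the non-monoidal version and $\colim^\oplax_I$ on $\Fun_\lax(K,\B)$, gives the commutative square. It commutes because \Cref{cor:transgpd}'s square commutes naturally in $K$ and the oplax colimits in $\Fun_\lax(K,\B)$ are underlying.

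The main obstacle I expect is the interchange equivalence $\Fun^{\otimes-\lax}_\oplax(I,\Fun_\lax(K,\B))\simeq\Fun_\lax(K,\Fun^{\otimes-\lax}_\oplax(I,\B))$, together with checking that it is compatible with the colimit/unit data. I would handle it by reducing, via the lax cartesian structure description used in the lemma preceding the proof of \Cref{prop:laxmongro}, to the non-monoidal interchange $\Fun_\lax(K,\Fun_\oplax(J,\B))\simeq\Fun_\oplax(J,\Fun_\lax(K,\B))$ applied with $J=I^\otimes$. The monoidal conditions are then pointwise and match on both sides.
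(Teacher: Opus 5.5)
Your Yoneda set-up is the same as the paper's: you apply \Cref{cor:transgpd} to $\Fun_\lax(K,\B)$ naturally in $K$. The interchange you flag as the main obstacle is essentially the defining property of $\Fun^{\otimes-\lax}_\oplax(I,\B)$ in \Cref{recoll:Funoplax}. The genuine gap is your identification of the target.

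The claim $\CAlg(\Fun_\lax(K,\B))^\simeq\simeq\Map(K,\CAlg(\B))$ is false. The underlying $1$-category of $\Fun_\lax(K,\B)$ has lax natural transformations as morphisms. So the multiplication $X\otimes X\to X$ of a commutative algebra there carries non-invertible naturality $2$-cells, and these are exactly (op)lax monoidality data on the transition maps $X(k)\to X(k')$. Hence, in the paper's notation, $\CAlg(\Fun_\lax(K,\B))^\simeq\simeq\Map(K,\CAlg(\B)_\oplax)$, with oplax monoidal maps as morphisms. The same observation, for modules with $K=\Delta^1$, drives the proof of \Cref{prop:laxstrict}. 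It is also why morphisms of $\Fun^{\otimes-\lax}_\oplax(I,\B)$ have non-invertible monoidality $2$-cells.

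Similarly, $\Map(\Delta^1,\Fun_\lax(K,\mathcal{C}))$ is a space of lax transformations, i.e.\ of maps out of a Gray tensor product. It identifies with $\Map(K,\Fun_\oplax(\Delta^1,\mathcal{C}))$, not with $\Map(K,\Fun(\Delta^1,\mathcal{C}))$. So your argument only yields a functor
\[\Fun^{\otimes-\lax}_\oplax(I,\B)\to\CAlg(\B)_\oplax\times_{\Fun^{\otimes-\lax}_\oplax(I,\B)}\Fun_\oplax(\Delta^1,\Fun^{\otimes-\lax}_\oplax(I,\B)).\]
This is exactly where the paper stands after its Yoneda step, not the functor in the statement.

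The missing idea is cutting this functor down to the strict parts, which is the actual content of the paper's proof.

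\emph{The $\CAlg$-component.} Unwinding, the oplax monoidality square of $\colim^\oplax_I F\to\colim^\oplax_I G$ is $\colim^\oplax_{I\times I}$ applied to the monoidality square of $F\to G$, pasted with the naturality square of the Beck--Chevalley map $\colim^\oplax_{I\times I}(F\circ m_I)\to\colim^\oplax_I F$. Here the monoidality square of $F\to G$ is the one with horizontal edges $F\boxtimes F\to G\boxtimes G$ and $F\circ m_I\to G\circ m_I$. One must check that both squares commute strictly.

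\emph{The arrow component.} The forgetful functor $\Fun^{\otimes-\lax}_\oplax(I,\B)\to\Fun_\oplax(I,\B)$ is conservative on hom-categories, and the underlying unit $\id\to\Delta\colim^\oplax_I$ is strictly natural. Together these force the naturality $2$-cells to be invertible.

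The first check is not formal; it depends on the input. For $I=\ast$ the construction is just the identity of $\Fun^{\otimes-\lax}_\oplax(\ast,\B)\simeq\CAlg(\B)_\oplax$. So the output is a strict algebra map exactly when the monoidality $2$-cell of $F\to G$ is invertible. That holds, for instance, for the morphisms of $\Fun^\otimes_\oplax(\Cob,\B)$ on which $F^\rig_\B$ is defined. This strictness has to be argued from the input data; it cannot come from an identification of $\CAlg(\Fun_\lax(K,\B))^\simeq$ with $\Map(K,\CAlg(\B))$.
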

\begin{proof}
    Using \Cref{lm:oplaxcolimsinlax}, we can apply \Cref{cor:transgpd} to $\Fun_\lax(K,\B)$, functorially in $(K,\B)$. 

    Using $\Map^{\otimes-\lax}(I,\Fun_\lax(K,\B))\simeq \Map(K,\Fun^{\otimes-\lax}_\oplax(I,\B))$, we obtain by the Yoneda lemma an extension of the transformation on groupoids to a transformation of the form 
    $$\Fun_\oplax^{\otimes-\lax}(I,\B)\to \CAlg(\B)_\oplax \times_{\Fun_\oplax^{\otimes-\lax}(I,\B)} \Fun_\oplax(\Delta^1,\Fun^{\otimes-\lax}_\oplax(I,\B))$$

    Unwinding the definitions, we find that for a map $F\to G$ in $\Fun^{\otimes-\lax}_\oplax(I,\B)$, the induced oplax morphism of commutative algebras has oplax monoidality squares given by $\colim^\oplax_{I\times I}$ applied to the following oplax commuting square:
    \[\begin{tikzcd}
	{F\boxtimes F} & {G\boxtimes G} \\
	{F\circ m_I} & {G\circ m_I}
	\arrow[from=1-1, to=1-2]
	\arrow[from=1-1, to=2-1]
	\arrow[from=1-2, to=2-2]
	\arrow[from={0.2}{0.8}, Rightarrow, from=2-1, to=1-2]
	\arrow[from=2-1, to=2-2]
\end{tikzcd}\] 
composed with the naturality square for the canonical map $\colim^\oplax_{I\times I} (F\circ m_I)\to \colim^\oplax_I F$. By design, both of these squares are strictly commutative, so it follows that our functor actually lands in $\CAlg(\B)\subset \CAlg(\B)_\oplax$. 

Furthermore, since $\Fun^{\otimes-\lax}_\oplax(I,\B)\to \Fun_\oplax(I,\B)$ is locally conservative, i.e. conservative on hom categories, it follows that the $\Fun_\lax(\Delta^1,\Fun_\oplax^{\otimes-\lax}(I,\B))$ component actually lands in $\Fun(\Delta^1,\Fun^{\otimes-\lax}_\oplax(I,\B))$. 
\end{proof}

More informally, we have a functor $C:\Fun^{\otimes-\lax}_\oplax(I,\B)\to \CAlg(\B)$ which is given by a certain commutative algebra structure on $\colim^\oplax_I F$, for $F:I\to \B$ lax symmetric monoidal, which is such that the unit map $F\to \Delta \colim^\oplax_I F$ is compatible with the monoidal structures. 

We now describe locally a counit map:
\begin{cons}
    Let $A\in \CAlg(\B)$, where $\B$ is a $2$-cocompletely symmetric monoidal $2$-category.

$A$ is classified by a lax symmetric monoidal functor $\pt\to \B$, and by definition $\Delta(A)$ is the composite lax symmetric monoidal functor $I\xrightarrow{p} \pt \to \B$. Thus, the Kan extended functor $\Fun(I\op,\Cat)\to \B$ classifying $\Delta(A)$ factors as $\Fun(I\op,\Cat)\xrightarrow{p_\#}\Cat \to \B$, where $\Cat\to \B$ is the lax symmetric monoidal functor $K\mapsto K\otimes A$. Since $\pt$ is terminal in $\Cat$, there is a unique commutative algebra map $p_\# Y_I\to \pt$, where $Y_I= \colim^\oplax_I y_I$ is the oplax colimit of the Yoneda embedding equipped with its commutative algebra structure from \Cref{cor:consC}.

Thus we obtain a map $\colim^\oplax_I \Delta(A)\to A$ of commutative algebras in $\B$. Upon inspection, the underlying map of this counit is the counit of the ordinary oplax colimit adjunction.
\end{cons}
\begin{cor}
Let $I$ be a symmetric monoidal category, and let $\B$ be a symmetric monoidal $2$-category with oplax colimits indexed by finite tensor powers of $I$ compatible with tensor products. 

    The functor $C:\Fun^{\otimes-\lax}_\oplax(I,\B)\to \CAlg(\B)$ constructed in \Cref{cor:consC} is left adjoint to the diagonal functor. 
\end{cor}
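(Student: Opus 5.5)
We will show that the unit transformation $u:\id\to \Delta C$ from \Cref{cor:consC} and the counit $c:C\Delta\to\id$ from the preceding construction satisfy the two triangle identities. Since everything has been built to lie over the ordinary oplax colimit adjunction $\colim^\oplax_I\dashv\Delta$ between $\Fun_\oplax(I,\B)$ and $\B$, we reduce as much as possible to that adjunction. The relevant forgetful functors $\CAlg(\B)\to\B$ and $\Fun^{\otimes-\lax}_\oplax(I,\B)\to\Fun_\oplax(I,\B)$ are conservative on mapping anima: the first is a forgetful functor from algebras, and the second is locally conservative, as noted at the end of the proof of \Cref{cor:consC}. It is therefore enough to produce homotopies for the two triangle composites and to check that their images after forgetting are the known triangle homotopies.

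First we check that $c$ is natural in $A\in\CAlg(\B)$. The construction is pulled back from a single universal situation: the commutative algebra $p_\#Y_I$ in $\Cat$ together with the unique map $p_\#Y_I\to\pt$. The functor $\Cat\to\B$, $K\mapsto K\otimes A$, is natural in $A$, so naturality follows. As recorded in the construction, the underlying map of $c_A$ is the usual counit.

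The main work is the triangle identities. For $\Delta\xrightarrow{u\Delta}\Delta C\Delta\xrightarrow{\Delta c}\Delta$, we use the Yoneda/universality argument of \Cref{cor:consgpdcoco}. Naturality in $(\B,A)$ reduces us to the universal pair, namely $\B$ the free symmetric monoidal $2$-cocomplete $2$-category on a commutative algebra, with $A$ the universal commutative algebra. This can be modelled through $\Cat$ with $A=\pt$, after which the composite becomes the self-map of $\Delta(\pt)$. Since $\pt$ is terminal among commutative algebras and $\Delta$ preserves this, the lax monoidal oplax self-transformation of $\Delta(\pt)$ is forced to be the identity.

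For $C\xrightarrow{Cu}C\Delta C\xrightarrow{cC}C$, we reduce similarly to the universal lax symmetric monoidal functor, namely the Yoneda embedding $y_I$ into $\Fun(I\op,\Cat)$ with $C(y_I)=Y_I$. The composite is then an endomorphism of the commutative algebra $Y_I$ whose underlying map is the identity, by the ordinary triangle identity. Here is the expected obstacle: the functor $\CAlg(\B)\to\B$ is conservative but not faithful on mapping anima, so knowing the underlying map is the identity does not automatically make it the identity as an algebra map. We would handle this by computing explicitly in $\Fun(I\op,\Cat)$, where $Y_I(i)=(I_{i/})\op$ has an initial object and the relevant mapping spaces are contractible. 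This is the same rigidity phenomenon as in \Cref{ex:univonI}, where the unit is terminal, and it should force every algebra endomorphism lifting the identity to be homotopic to the identity. If that fails, the fallback is to compare directly with the universal property of $Y_I$ as the oplax colimit in $\CAlg$ using the monoidal strictness from \Cref{obs:monstr}.
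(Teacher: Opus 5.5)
There is a genuine gap in the first triangle identity. You try to show that $c_{C(F)}\circ C(u_F)$ is homotopic to the identity \emph{as a map of commutative algebras}. You correctly note that conservativity of $\CAlg(\B)\to\B$ does not give this, but then you leave it open: the claim that every algebra endomorphism of $Y_I$ lifting the identity is homotopic to the identity is only motivated, never proved. The motivation is also inaccurate. First, $(I_{i/})\op$ has a terminal object, not an initial one. Second, it is unclear which mapping spaces you claim are contractible. Plain endomorphisms of $Y_I$ in $\Fun(I\op,\Cat)$ are not unique in general (for $I=\{0<1\}$ under $\max$ there are two). So one would really have to compute the fiber of $\Map_{\CAlg}(Y_I,Y_I)\to\Map(Y_I,Y_I)$ over the identity. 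Your fallback, which appeals to the universal property of $Y_I$ as an oplax colimit in $\CAlg$, is circular, since that universal property is what is being proved. Likewise, your opening step (``conservative, therefore it suffices to check the homotopies after forgetting'') does not follow: conservativity reflects invertibility, not homotopies between maps.

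The missing observation is that the triangle composites only need to be \emph{equivalences}. Given $u$ natural and $c$ natural in $A$ (which you do argue), put $\theta(g)=\Delta(g)\circ u_F$ and $\psi(h)=c_A\circ C(h)$. Naturality gives $\theta\psi(h)\simeq\Delta(c_A)\circ u_{\Delta A}\circ h$ and $\psi\theta(g)\simeq g\circ c_{C(F)}\circ C(u_F)$. So if both composites are equivalences, then $\theta\colon\Map(C(F),A)\to\Map(F,\Delta A)$ is an equivalence and $u$ is a unit. This is exactly the paper's proof. The first composite lies over the ordinary triangle composite, so it is an equivalence by conservativity of $\CAlg(\B)\to\B$, and no rigidity property of $Y_I$ is needed.

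For the second composite, the paper argues that it is monoidally strict: $\Delta$ of an algebra map is strict, and $u$ is monoidally strict by \Cref{obs:monstr}. It then uses that the forgetful functor is conservative on monoidally strict transformations. Your route is a valid alternative that even produces the identity. You transport along $-\otimes A\colon\Cat\to\B$ to the pair $(\Cat,\pt)$, where $\Delta(\pt)$ is terminal in $\Fun^{\otimes-\lax}_\oplax(I,\Cat)$ because all data mapping to $\pt$ are unique. This works provided you justify that the unit and counit are compatible with the lax symmetric monoidal cocontinuous functor $-\otimes A$, which is how the appendix builds them from the universal case.
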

\begin{proof}
    Without loss of generality, $\B$ is $2$-cocompletely symmetric monoidal. 

    In that case, recall that we already constructed a unit $F\to \Delta\circ C(F)$ natural in $F$, as well as, for each object $A\in\CAlg(\B)$, a counit $C\Delta(A)\to A$. 

    We are left with verifying that the following composites are equivalences: $C(F)\to C(\Delta\circ C(F))\to C(F)$ and $\Delta(A)\to \Delta\circ C\circ \Delta(A)\to \Delta(A)$. 

   The underlying maps of the unit and the counit are the unit and the counit of the ordinary oplax colimit adjunction applied to the underlying functor of $F$, and the underlying object of $A$. 

    Since $\CAlg(\B)\to \B$ is conservative, this proves that the first map is an equivalence. 

    For the second one, we note that the counit is a map of algebras, so $\Delta$ applied to it yields a strict natural transformation, and that in general, $F\to \Delta C(F)$ is monoidally strict (in the sense of \Cref{obs:monstr} -- though of course, it is not a strict transformation). In total the composite $\Delta(A)\to \Delta C\Delta(A)\to \Delta(A)$ is a monoidally strict transformation. The forgetful functor $\Fun^{\otimes-\lax}_\oplax(I,\B)\to \Fun_\oplax(I,\B)$ is conservative on monoidally strict transformations, and so it follows from the above discussion that this composite is an equivalence. 

    In total, we have verified both triangle identities, which shows the adjunction. 
\end{proof}

The fact that the unit map $F\to \Delta C(F)$ is a lift of the unit map of the ordinary oplax colimit adjunction furthermore shows the following, which is the precise statement behind \Cref{cons:colim}:
\begin{cor}
    Let $I$ be a symmetric monoidal category and $\B$ a symmetric monoidal $2$-category admitting oplax colimits indexed by finite tensor powers of $I$, compatible with the tensor product. 

    In this case, the diagonal functor $\CAlg(\B)\to\Fun_\oplax^{\otimes-\lax}(I,\B)$ admits a left adjoint and the following square is horizontally left adjointable:
\[\begin{tikzcd}
	{\Fun^{\lax-\otimes}_\oplax(I,\B)} & {\CAlg(\B)} \\
	{\Fun_\oplax(I,\B)} & \B
	\arrow[from=1-1, to=2-1]
	\arrow[from=1-2, to=1-1]
	\arrow[from=1-2, to=2-2]
	\arrow[from=2-2, to=2-1]
\end{tikzcd}\]
\end{cor}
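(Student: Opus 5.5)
The existence of the left adjoint is the previous corollary; what remains is horizontal left adjointability of the square. Concretely, for $F\in\Fun^{\lax-\otimes}_\oplax(I,\B)$ we must show that the Beck--Chevalley map
$$\colim^\oplax_I UF\longrightarrow U\,C(F)$$
is an equivalence in $\B$. Here $U$ denotes the forgetful functors, $C$ is the left adjoint from \Cref{cor:consC}, and $\colim^\oplax_I$ is the left adjoint of the bottom diagonal. My plan is to identify this mate with the identity, using the fact that $C$ was built to lie over $\colim^\oplax_I$.

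\textbf{Step 1: unwind the mate.} The Beck--Chevalley map is the composite
$$\colim^\oplax_I UF\xrightarrow{\colim^\oplax_I U(\eta_F)}\colim^\oplax_I U\Delta C(F)=\colim^\oplax_I \Delta\, UC(F)\xrightarrow{\epsilon^{\mathrm{ord}}}UC(F).$$
The first arrow uses the unit $\eta$ of the $C\dashv\Delta$ adjunction. The equality in the middle holds because the diagonals commute with forgetting. The last arrow is the counit of the ordinary oplax colimit adjunction.

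\textbf{Step 2: identify the unit.} By \Cref{cor:consC}, $UC(F)\simeq\colim^\oplax_I UF$. Moreover, $U(\eta_F)$ is the unit $UF\to\Delta\colim^\oplax_I UF$ of the ordinary adjunction; this is exactly the content of the commutative square in \Cref{cor:consC}, which includes the unit map.

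\textbf{Step 3: conclude.} After Step 2, the mate becomes
$$\colim^\oplax_I UF\to\colim^\oplax_I\Delta\colim^\oplax_I UF\to\colim^\oplax_I UF,$$
which is the triangle-identity composite for the ordinary adjunction. Hence it is the identity, and in particular an equivalence.

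The main obstacle is bookkeeping in Step 2. The identification $UC\simeq\colim^\oplax_I U$ must be compatible with the units, and not merely exist objectwise. This is guaranteed because \Cref{cor:consC} produces $C$ together with its unit as a lift of the pair (oplax colimit, unit), naturally in $F$. The triangle identities for $C\dashv\Delta$ were verified using these same underlying unit and counit maps, so the identification is the one the mate computation requires.
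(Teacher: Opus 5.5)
Your proposal is correct and is essentially the paper's argument. The paper deduces the statement in one line from the fact that the unit $F\to\Delta C(F)$ of \Cref{cor:consC} lifts the unit of the ordinary oplax colimit adjunction; your Steps 1--3 spell out why this turns the Beck--Chevalley mate into a triangle-identity composite, hence an equivalence (after reducing, as in the preceding corollary, to $\B$ being $2$-cocompletely symmetric monoidal, the setting in which \Cref{cor:consC} is stated).
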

\bibliographystyle{alpha}
\bibliography{Biblio.bib}

\end{document}